\DeclareMathOperator*{\spt}{spt}
\DeclareMathOperator*{\Var}{Var}
\newcommand{\RR}{\mathbb{R}}
\newcommand{\R}{\RR}
\newcommand{\EE}{\mathbb{E}}
\newcommand{\PP}{\mathbb{P}}
\newcommand{\NN}{\mathbb{N}}
\newcommand{\eps}{\varepsilon}
\newcommand{\Banach}{\mathcal{B}_\oplus}
\newcommand{\1}{{\mathbf 1}}
\newcommand{\mykill}[1]{}
\crefname{assumption}{Assumption}{Assumptions}
\crefname{equation}{}{} %
\theoremstyle{plain}
\newtheorem{theorem}{Theorem}[section]
\newtheorem{proposition}[theorem]{Proposition}
\newtheorem{lemma}[theorem]{Lemma}
\newtheorem{corollary}[theorem]{Corollary}
\theoremstyle{definition}
\newtheorem{remark}[theorem]{Remark}
\newtheorem{assumption}[theorem]{Assumption}
{%
\end{oldthebibliography}%
}
\begin{document}

\title{\vspace{-2.5em} Sample Complexity of 
Quadratically Regularized\\Optimal Transport}
\date{\today}
\author{Alberto Gonz{\'a}lez-Sanz%
  \thanks{Department of Statistics, Columbia University, ag4855@columbia.edu} 
  \and  
 Eustasio del Barrio%
  \thanks{IMUVa, Universidad de Valladolid, eustasio.delbarrio@uva.es}
  \and  
  Marcel Nutz%
  \thanks{Departments of Statistics and Mathematics, Columbia University, mnutz@columbia.edu. Research supported by NSF Grants DMS-2106056, DMS-2407074.}
  }
  
\maketitle \vspace{-2em}
\begin{abstract}
It is well known that optimal transport suffers from the curse of dimensionality: when the prescribed marginals are approximated by i.i.d.\ samples, the convergence of the empirical optimal transport problem to the population counterpart slows exponentially with increasing dimension. Entropically regularized optimal transport (EOT) has become the standard bearer in many statistical applications as it avoids this curse. Indeed, EOT has parametric sample complexity, as has been shown in a series of works based on the smoothness of the EOT potentials or the strong concavity of the dual EOT problem. However, EOT produces full-support approximations to the (sparse) OT problem, leading to overspreading in applications, and is computationally unstable for small regularization parameters.

The most popular alternative is quadratically regularized optimal transport (QOT), which penalizes couplings by $L^2$ norm instead of relative entropy. QOT produces sparse approximations of OT and is computationally stable. However, its potentials are not smooth (do not belong to a Donsker class) and its dual problem is not strongly concave, hence QOT is often assumed to suffer from the curse of dimensionality. In this paper, we show that QOT nevertheless has parametric sample complexity. More precisely, we establish central limit theorems for its dual potentials, optimal couplings, and optimal costs. Our analysis is based on novel arguments that focus on the regularity of the support of the optimal QOT coupling. Specifically, we establish a Lipschitz property of its sections and leverage VC theory to bound its statistical complexity. Our analysis also leads to gradient estimates of independent interest, including $\mathcal{C}^{1,1}$~regularity of the population potentials.
\end{abstract}

 \vspace{0.5em}

{\small
\noindent \emph{Keywords} Optimal Transport; Quadratic Regularization; Sample Complexity; Central Limit Theorem

\noindent \emph{AMS 2020 Subject Classification}  62G05; {62R10}; {62G30}
}
 \vspace{.0em}

\section{Introduction}

The  optimal transport (OT) problem between compactly supported probability measures~\(P, Q\) on \(\mathbb{R}^d\) is
\begin{equation}
    \label{OT}
    \tag{OT}
    {\rm OT}(P,Q) := \min_{\pi \in \Pi(P,Q)} \int \frac{1}{2} \|x - y\|^2 \, d\pi(x,y),
\end{equation}
where \(\Pi(P,Q)\) denotes the set of couplings between \(P\) and \(Q\); i.e., probability measures on \(\mathbb{R}^d \times \mathbb{R}^d\) with marginals \((P,Q)\). A well-known limitation for its application in high-dimensional statistics and data science is the curse of dimensionality suffered by~\eqref{OT} when the marginals are approximated by samples; see \cite{ChewiNilesWeedRigollet.25} for a recent survey. 
Regularized optimal transport (ROT)  introduces a divergence penalty to \eqref{OT}, 
\begin{equation}
    \label{ROT}
    \tag{ROT}
    {\rm ROT}_\eps(P,Q) := \min_{\pi \in \Pi(P,Q)} \int \frac{1}{2} \|x - y\|^2 \, d\pi(x,y) + \eps \int \varphi\left( \frac{d\pi}{d(P \otimes Q)} \right) d(P \otimes Q),
\end{equation}
where $d\pi/d(P \otimes Q)$ is the Radon--Nikodym density of \(\pi\) w.r.t.\ the product measure \(P \otimes Q\), $\eps>0$ is the regularization parameter and \(\varphi\) is a convex function. A standard choice is the logarithmic entropy \(\varphi(x) = x \log x\), leading to the entropic optimal transport (EOT) problem. EOT admits efficient computation via Sinkhorn's algorithm \cite{Cuturi.2013.Neurips} and avoids the curse of dimensionality: when $(P,Q)$ are approximated by the empirical measures corresponding to i.i.d.\ samples, the empirical optimal costs, couplings, and dual potentials all converge to their population counterparts at the parametric rate \(n^{-1/2}\) \cite{genevay.2019.PMLR,MenaWeed.2019.Nips,delBarrioEtAl.2023.SIMODS}. In addition, central limit theorems for all three quantities have been established \cite{goldfeld.2024.statisticalinferenceregularizedoptimal,GonzalezSanz.2024.weaklimits,GonzalezSanz.2023.Beyond,delBarrioEtAl.2023.SIMODS}; see also the recent surveys \cite{balakrishnan.et.al.2025.survey,delbarrio.et.al.2025.survey}. 
On the flip side, using logarithmic entropy entails that the optimal coupling of~\eqref{ROT} has full support (equal to the support of $P\otimes Q$), a phenomenon known as overspreading since the true optimal coupling for~\eqref{OT} is sparse (given by Brenier's map) as soon as one marginal is absolutely continuous. Overspreading can lead to blurring in image processing~\cite{blondel18quadratic} or bias in manifold learning~\cite{zhang.2023.manifoldlearningsparseregularised}, for example.

Starting with \cite{Muzellec.2017.AAAI,blondel18quadratic,EssidSolomon.18}, quadratic regularization has emerged as the most popular alternative to logarithmic entropy. Indeed, the choice $\varphi(x) = \frac{1}{2}x^2$ gives rise to the squared $L^2$ norm (or $\chi^2$ divergence) as penalty  and leads to the quadratically regularized optimal transport (QOT) problem 
\begin{equation}
    \label{QOT}
    \tag{QOT}
    {\rm QOT}_\eps(P,Q) := \min_{\pi \in \Pi(P,Q)} \int \frac{1}{2}\|x - y\|^2 \, d\pi(x,y) + \frac{\eps}{2} \left\|\frac{d\pi}{d(P \otimes Q)}\right\|_{L^2(P \otimes Q)}^2.
\end{equation}
The optimal cost ${\rm QOT}_\eps(P,Q)$ approximates ${\rm OT}(P,Q)$ at rate \( \eps^{2/(d+2)} \) as $\eps\to0$ (see \cite{EcksteinNutz.22,GarrizmolinaElAl.2024}) and, in contrast to EOT, the support of the optimal coupling converges to the support of the unregularized optimal transport. This sparsity for small~$\eps$ has been observed empirically since the initial works (e.g., \cite{blondel18quadratic,EssidSolomon.18, Lorenz.2019}) and established theoretically more recently in~\cite{Nutz.24,WieselXu.24,GonzalezSanzNutz2024.Scalar}. A separate advantage is that EOT tends to be computationally unstable for small regularization parameters due to the occurrence of exponentially large/small values, as noted for instance in~\cite{LiGenevayYurochkinSolomon.2020.NIPS}. Such values do not occur for QOT. While QOT has long been effective in computational practice, we also mention the recent theoretical guarantee~\cite{GonzalezSanzNutzRiveros.25gradDesc} that gradient descent for the dual problem of QOT converges exponentially fast.

Denoting $\left( x \right)_+ =\max\{x,0\}$, the dual problem of \eqref{QOT} reads
\begin{multline}
    \label{DQOT}
    \sup_{(f, g)\in \mathcal C(\R^d)\times\mathcal C(\R^d)} \int f(x) \, dP(x) + \int g(y) \, dQ(y) \\
    - \frac{1}{2\eps} \int \left( f(x) + g(y) - \frac{\|x - y\|^2}{2} \right)_+^2 \, d(P \otimes Q)(x,y).
\end{multline}
Its optimizers \((f_\eps, g_\eps)\)  are called the QOT potentials and describe the density of the optimal coupling $\pi_\eps$ of \eqref{QOT} via
\begin{equation}
    \label{eq:primal-dual-intro}
    d\pi_\eps(x,y) = \frac{1}{\eps} \left( f_\eps(x) + g_\eps(y) - \frac{\|x - y\|^2}{2} \right)_+  \, d(P \otimes Q)(x,y).
\end{equation}
We observe that the objective function in the dual problem~\eqref{DQOT} is not strongly (or even strictly) concave. Moreover, the regularity of \((f_\eps, g_\eps)\) does not generally surpass that of the unregularized OT potentials (e.g., \cite{GonzalezSanzNutz.24b}). For general divergences $\varphi$, the dual has a form similar to~\eqref{DQOT} but with $\frac12(\cdot)_+^2$ replaced by the conjugate $\psi(s)=\sup_{t\geq 0} \{ st-\varphi(t)\}$ of $\varphi$.
For the particular choice of logarithmic entropy, the dual objective is strongly concave and moreover the potentials inherit the smoothness of the transport cost $\frac{1}{2}\|x - y\|^2$. This is crucial for the aforementioned works on sample complexity and central limit theorems for EOT, which can be divided into two approaches: the first, starting with \cite{genevay.2019.PMLR,MenaWeed.2019.Nips}, is based on empirical process theory and requires uniform regularity of the empirical dual potentials; the second, starting with \cite{delBarrioEtAl.2023.SIMODS,rigollet2022samplecomplexityentropicoptimal}, exploits the strong concavity of the dual EOT problem (see \cite{balakrishnan.et.al.2025.survey,delbarrio.et.al.2025.survey,gonzalezsanz.2025.sparseregularizedoptimaltransport} for more detailed reviews). Analyzing more general ROT---where the dual is not strongly concave and the potentials are not smooth---through the same approaches, one obtains results as in \cite{BayraktarEckstein.2025.BJ} which are consistent with ROT suffering from the curse of dimensionality. By contrast, and maybe surprisingly, the recent work \cite{gonzalezsanz.2025.sparseregularizedoptimaltransport} establishes central limit theorems for a class of ROT problems that are not smooth or strongly concave. Its main condition on the divergence is that the conjugate $\psi(s)=\sup_{t\geq 0} \{ st-\varphi(t)\}$ be $\mathcal{C}^2$. 
This condition fails for QOT, arguably the most interesting example for ROT, where $\psi$ is $\mathcal{C}^{1,1}$ but not~$\mathcal{C}^2$. For QOT, the (weak) second derivative is an indicator function rather than a continuous function, and this difference derails the core arguments in \cite{gonzalezsanz.2025.sparseregularizedoptimaltransport}.

The present work establishes central limit theorems for the dual potentials, optimal costs, and couplings of QOT (\cref{th:CLT.potentials,th:CLT.cost,th:CLT.plans}). While their assertions are analogous to the ones in \cite{gonzalezsanz.2025.sparseregularizedoptimaltransport}, the derivations are substantially different. In \cite{gonzalezsanz.2025.sparseregularizedoptimaltransport}, it is assumed that the conjugate $\psi$ is $\mathcal{C}^2$, and this allows for soft arguments---the marginal measures are very general and the transport cost is any $\mathcal{C}^1$ function. For QOT, this assumption fails and finer structures are needed. We focus on quadratic transport cost $\frac{1}{2}\|x - y\|^2$ and marginals with convex supports and bounded Lebesgue densities; these structures are crucial for our regularity results, which are also some of our main technical contributions. 

One of those is a Lipschitz regularity property of the sections $\mathcal{S}_x$ of the support of the optimal population coupling. Representing $\mathcal{S}_x$ as the zero-sublevel set of a convex function, we consider more generally the $\beta$-sublevel sets which correspond to a thickening or shrinking of $\mathcal{S}_x$, and establish a Lipschitz estimate w.r.t.\ $\beta$ (see \cref{prop:bound-Lipschitz}). Both the result and the broader approach  are novel to the best of our knowledge. The result is crucial in several steps of the analysis, such as establishing the Fréchet differentiability of the first-order condition for the population potentials (\cref{lemma:frechet}). Another consequence is the $\mathcal{C}^{1,1}$-regularity of the QOT potentials (\cref{co:potentialGradientsAreLipschitz}), which is of independent interest and links to Caffarelli's regularity theory for~\eqref{OT}.

Another key innovation is to employ Vapnik--Chervonenkis (VC) theory. Specifically, we use VC theory to bound the statistical complexity of the sections $\mathcal{S}_x$ of the support of the optimal coupling (and more generally its thickenings or shrinkings). We package this bound into an abstract tool, \cref{pr:VCtool}, which is used several times throughout the proof of the central limit theorems; see also \cref{se:proofMethodology} for an overview of the proof methodology. At a high level, this VC-theoretic bound acts as a substitute for the missing regularity of~$\psi$. To the best of our knowledge, VC theory has not been used previously in the sample complexity analysis of optimal transport or regularized optimal transport.

The remainder of the paper is organized as follows. \Cref{se:SetupBackground} details the setting, notation and assumptions, and summarizes background facts about QOT for later reference. \Cref{se:TheCLTs} states our three central limit theorems---\cref{th:CLT.potentials,th:CLT.cost,th:CLT.plans} for the dual potentials, the optimal costs, and the optimal couplings---followed by an overview of the proof methodology. In \cref{se:RegularityEstimates} we provide regularity results for the potentials and the optimal supports. These results are some of our main technical contributions; they are of independent interest in addition to being key ingredients for proving the central limit theorems. After these preparations, \cref{se:proof.CLT.potentials} proves the central limit theorem for the potentials, \cref{th:CLT.potentials}. \Cref{Section:CLT-plans-and-cost} proves the remaining central limit theorems for the optimal costs and couplings (\cref{th:CLT.cost,th:CLT.plans}), which follow from \cref{th:CLT.potentials} and some additional work. Finally, \cref{se:omittedProofs} details omitted proofs and references for the background facts in \cref{se:SetupBackground}.

\section{Setup and Background}\label{se:SetupBackground}

This section details our setup and summarizes (mostly) known background material for later reference. Proofs are deferred to~\cref{se:omittedProofs}.

Given $\mathcal{X}\subset\R^d$, we set $\|\mathcal{X}\|_\infty:=\sup_{x\in\mathcal{X}} \|x\|$. We write $\mathcal{C}(\mathcal{X})$ for the space of continuous real-valued functions on~$\mathcal{X}$ and $\mathcal{C}^{k,\alpha}(\mathcal{X})$ for the functions with $k$ continuous derivatives whose $k$-th derivatives are Hölder continuous of order~$\alpha$, $0< \alpha\leq 1$. %
A random sequence $U_n$ in a separable Banach space $\mathcal{B}$ converges weakly to $U$ if $\EE[f(U_n) ] \to \EE[f(U) ]$ for every bounded and continuous function $f: \mathcal{B}\to \R$; in that case, we write 
 $U_n \overset{\mathcal{B}}{ \rightsquigarrow} U$. When $\mathcal{B}=\R$, we also write  $\xrightarrow{w}$ instead of $ \overset{\R}{ \rightsquigarrow}$. %

\subsection{Population setting}

Throughout, we impose the following conditions on the given (population) marginal measures. 

\begin{assumption}\label{Assumption:reg}
    The probability measures $P,Q$ have compact, convex supports $\mathcal{X},\mathcal{Y}$ in~$\R^d$ and admit bounded densities 
    w.r.t.~the Lebesgue measure~$\mathcal{L}_d$. 
\end{assumption}

Given $(f,g)\in\mathcal{C}(\mathcal{X})\times \mathcal{C}(\mathcal{Y})$, we denote by $f\oplus g$ the function $(x,y)\mapsto f(x)+g(y)$. As we will often be interested only in $f\oplus g$ rather than $f$ and $g$ individually, it is useful to define the equivalence relation $(f,g)\sim_\oplus (f',g')$ iff $f\oplus g=f'\oplus g'$, which is equivalent to the existence of $a\in\R$ with $f=f'+a$ and $g=g'-a$. We define the quotient spaces
\begin{align}\label{eq:defBanach}
  \mathcal{B}_{\oplus} = (\mathcal{C}(\mathcal{X})\times \mathcal{C}(\mathcal{Y}))/\sim_\oplus, 
  \qquad 
  \mathcal{B}^{0,1}_{\oplus} = (\mathcal{C}^{0,1}(\mathcal{X})\times \mathcal{C}^{0,1}(\mathcal{Y}))/\sim_\oplus
\end{align}
and denote by $\|\cdot\|_\oplus$ and $\|\cdot\|_{\oplus,1}$ the quotient  norms on $\mathcal{B}_\oplus$ and $\mathcal{B}^{0,1}_\oplus$, respectively.

\begin{lemma}[Population potentials]\label{le:population.potentials}
    Let $P$ and $Q$ satisfy \cref{Assumption:reg}. 
    \begin{enumerate}
    \item There exists a unique pair $(f_\eps,g_\eps)\in\mathcal{B}_\oplus$, called the population potentials, solving the dual problem~\eqref{DQOT}. 
    \item The functions $(\varphi_\eps,\psi_\eps)\in\mathcal{B}_\oplus$ defined by 
    \begin{align}\label{eq:convexPopPotentials}
        \varphi_\eps(x)=\frac{\|x\|^2}{2}-f_\eps(x), \qquad \psi_\eps(y)=\frac{\|y\|^2}{2}-g_\eps(y)
    \end{align}
    are called the (transformed) potentials. They are uniquely characterized by the first-order optimality condition
    \begin{equation}\label{population_optimality}
    \begin{cases}
      \eps=  \int  \left( \langle x, y \rangle - \varphi_\eps(x)- \psi_\eps(y) \right)_+ dQ(y) & \text{ for all } x\in \mathcal{X},\\  
      \eps=  \int  \left( \langle x, y \rangle - \varphi_\eps(x)- \psi_\eps(y)\right)_+ dP(x) &\text{ for all } y\in \mathcal{Y}.
    \end{cases}
    \end{equation}    
    Moreover, $\varphi_\eps$ and $\psi_\eps$ are convex and continuously differentiable, with gradients
    \begin{align}\label{eq:population-gradient-formula}
    \nabla \varphi_\eps(x)= \frac{\int_{\mathcal{S}_x} y dQ(y)}{Q(\mathcal{S}_x)} \quad \text{and}\quad \nabla \psi_\eps(y)= \frac{\int_{\mathcal{T}_y} x dP(x)}{P(\mathcal{T}_y)}, \quad\mbox{where}
    \end{align}
    \begin{align}\label{eq:sections}
    \mathcal{S}_x= \{y\in \mathcal{Y}: \varphi_\eps(x)+ \psi_\eps(y)\leq \langle x, y \rangle\}, \quad  \mathcal{T}_y= \{x\in \mathcal{X}: \varphi_\eps(x)+ \psi_\eps(y)\leq \langle x, y \rangle\} .
    \end{align}  
    In particular, $\varphi_\eps$ and $\psi_\eps$ are Lipschitz with constants $\|\mathcal{Y}\|_\infty$  and $\|\mathcal{X}\|_\infty$, respectively.
    
    \item The sets $\mathcal{S}_x$ and  $\mathcal{T}_y$ 
    are convex, have nonempty interior, boundaries given by
    \begin{equation}\label{eq:sectionsBoundaries}
    \partial{\mathcal{S}}_x=\{y\in\mathcal{Y}:\,\varphi_\eps(x)+\psi_\eps(y)=\langle x,y \rangle\}, \quad  \partial{\mathcal{T}}_y=\{x\in\mathcal{X}:\,\varphi_\eps(x)+\psi_\eps(y)=\langle x,y \rangle\}
    \end{equation}
    and there exists $\delta>0$ such that
    \begin{equation}\label{eq:sectionsBounds}
    \delta\leq  Q( \mathcal{S}_x) \quad \text{and}\quad \delta\leq   P( \mathcal{T}_y)  \quad \text{for all } x\in \mathcal{X}, \ y\in \mathcal{Y}.    
    \end{equation}
    \item The primal problem~\eqref{QOT} has a unique solution $\pi_\eps\in\Pi(P,Q)$. It is related to the potentials by
    \begin{equation}
    \label{eq:primal-dual}
    \frac{d\pi_\eps}{d(P \otimes Q)}(x,y) = \frac{1}{\eps} \left( f_\eps(x) + g_\eps(y) - \frac{\|x - y\|^2}{2} \right)_+ = \frac{1}{\eps} \left( \langle x, y \rangle - \varphi_\eps(x)- \psi_\eps(y) \right)_+
\end{equation}
    and its support is
        $\spt \pi_\eps = \{(x,y)\in \mathcal{X}\times\mathcal{Y}: \varphi_\eps(x)+ \psi_\eps(y)\leq \langle x, y \rangle\}$,
    so that $\mathcal{S}_x$ and~$\mathcal{T}_y$ can be interpreted as the sections of the support.
    \end{enumerate}
\end{lemma}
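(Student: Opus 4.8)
The first move is to pass to the transformed variables $\varphi=\|\cdot\|^2/2-f$ and $\psi=\|\cdot\|^2/2-g$, under which the quadratic parts of the cost are absorbed and the dual objective in~\eqref{DQOT} becomes, up to an additive constant, $-\int\varphi\,dP-\int\psi\,dQ-\frac1{2\eps}\int(\langle x,y\rangle-\varphi(x)-\psi(y))_+^2\,d(P\otimes Q)$, to be maximized over $\mathcal C(\mathcal X)\times\mathcal C(\mathcal Y)$. Part~(i), the primal existence/uniqueness, the primal--dual relation~\eqref{eq:primal-dual}, and the support description in part~(iv) are provided, under \cref{Assumption:reg}, by the by-now standard duality theory for \eqref{QOT}: strong duality and existence of a primal minimizer follow from Fenchel--Rockafellar duality; uniqueness of $\pi_\eps$ from strict convexity of the $\chi^2$-penalty on the set where the density is positive; and the KKT conditions give~\eqref{eq:primal-dual} and $\spt\pi_\eps=\{\varphi_\eps\oplus\psi_\eps\le\langle\cdot,\cdot\rangle\}$. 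Uniqueness of $(f_\eps,g_\eps)$ in $\mathcal B_\oplus$ then follows from \eqref{eq:primal-dual} together with the first-order conditions and the connectedness of $\mathcal X,\mathcal Y$. For all of this I would cite \cite{Lorenz.2019,Nutz.24,WieselXu.24,GonzalezSanzNutz2024.Scalar} and concentrate on parts~(ii)--(iii).

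To obtain the first-order conditions~\eqref{population_optimality}, I would take one-sided directional derivatives of the (concave) dual objective in directions $(u,0)$ and $(0,v)$; since $(\cdot)_+$ is $1$-Lipschitz this is justified by dominated convergence and yields $\int u\,dP=\frac1\eps\int(\langle x,y\rangle-\varphi_\eps(x)-\psi_\eps(y))_+u(x)\,d(P\otimes Q)$ for all $u$, i.e.\ \eqref{population_optimality} holds $P$- and $Q$-a.e. To upgrade this to ``for all $x,y$'', I would pass to canonical representatives: for fixed $\psi_\eps$, the scalar map $\beta\mapsto\int(\langle x,y\rangle-\beta-\psi_\eps(y))_+\,dQ(y)$ is convex, non-increasing, vanishes for large $\beta$, and is therefore strictly decreasing wherever positive, so the equation ``$=\eps$'' has a unique solution; taking this as the new value of $\varphi_\eps(x)$ changes $\varphi_\eps$ only on a $P$-null set (hence does not affect the other condition), and symmetrically for $\psi_\eps$. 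After this, \eqref{population_optimality} holds everywhere, and conversely any everywhere-solution of \eqref{population_optimality} is, by concavity, a dual optimizer, hence equals $(\varphi_\eps,\psi_\eps)$ in $\mathcal B_\oplus$; this is the asserted characterization.

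For convexity and the gradient formula, write $\Phi(x,\beta):=\int(\langle x,y\rangle-\beta-\psi_\eps(y))_+\,dQ(y)$. Since $(x,\beta)\mapsto\langle x,y\rangle-\beta-\psi_\eps(y)$ is affine and $(\cdot)_+$ convex nondecreasing, $\Phi$ is jointly convex and non-increasing in $\beta$. The standard level-set argument then gives convexity of $\varphi_\eps$: for $x_t=(1-t)x_0+tx_1$ and $\beta_t=(1-t)\varphi_\eps(x_0)+t\varphi_\eps(x_1)$, joint convexity yields $\Phi(x_t,\beta_t)\le\eps=\Phi(x_t,\varphi_\eps(x_t))$, and since $\Phi(x_t,\cdot)$ is strictly decreasing near the level $\eps>0$ we get $\beta_t\ge\varphi_\eps(x_t)$; symmetrically $\psi_\eps$ is convex. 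For the $\mathcal C^1$ regularity I would apply the implicit function theorem to $\Phi(x,\varphi_\eps(x))=\eps$: differentiating under the integral sign (again by dominated convergence, the difference quotients of $(\cdot)_+$ being bounded) is legitimate because the ``kink set'' $\{y:\langle x,y\rangle-\varphi_\eps(x)-\psi_\eps(y)=0\}$ is the boundary of the sublevel set of the convex function $y\mapsto\psi_\eps(y)-\langle x,y\rangle+\varphi_\eps(x)$, which has nonempty interior since $\Phi(x,\varphi_\eps(x))=\eps>0$ forces $Q(\mathcal S_x)>0$, hence is $\mathcal L_d$-null and so $Q$-null; this gives $\partial_\beta\Phi=-Q(\mathcal S_x)<0$ and $\nabla_x\Phi=\int_{\mathcal S_x}y\,dQ(y)$, and the implicit function theorem (combined with uniqueness of the scalar solution) yields the formula in~\eqref{eq:population-gradient-formula}. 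Continuity of $x\mapsto Q(\mathcal S_x)$ and $x\mapsto\int_{\mathcal S_x}y\,dQ$ (from $Q$-nullity of the boundaries) then gives $\nabla\varphi_\eps\in\mathcal C(\mathcal X)$; since $|\nabla\varphi_\eps(x)|\le\|\mathcal Y\|_\infty$ and $\mathcal X$ is convex, $\varphi_\eps$ is $\|\mathcal Y\|_\infty$-Lipschitz, and symmetrically for $\psi_\eps$.

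Part~(iii): convexity of $\mathcal S_x,\mathcal T_y$ and the boundary descriptions in~\eqref{eq:sectionsBoundaries} are immediate from~\eqref{eq:sections} (intersections of $\mathcal Y$, resp.\ $\mathcal X$, with a sublevel set of a convex function), and nonempty interior follows once $Q(\mathcal S_x)>0$ because a convex set carrying positive $Q$-mass cannot lie in a hyperplane. For the uniform bound~\eqref{eq:sectionsBounds}, I would use the first-order condition as $\eps=\int_{\mathcal S_x}(\langle x,y\rangle-\varphi_\eps(x)-\psi_\eps(y))\,dQ(y)$ together with $\langle x,y\rangle-\varphi_\eps(x)-\psi_\eps(y)=\eps\,\frac{d\pi_\eps}{d(P\otimes Q)}(x,y)\le\eps\,\|d\pi_\eps/d(P\otimes Q)\|_\infty$, the $L^\infty$-bound on the optimal density holding under \cref{Assumption:reg} (\cite{Lorenz.2019}, or derivable from the primal marginal constraint); this gives $\eps\le\eps\,\|d\pi_\eps/d(P\otimes Q)\|_\infty\,Q(\mathcal S_x)$, i.e.\ $Q(\mathcal S_x)\ge\|d\pi_\eps/d(P\otimes Q)\|_\infty^{-1}=:\delta>0$ uniformly. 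The main obstacle is the $\mathcal C^1$ step — extracting everywhere-differentiability and the \emph{exact} gradient formula despite the non-smoothness of $(\cdot)_+$, which hinges on controlling the kink set — together with making the constants (the Lipschitz bound, $\delta$) normalization-free via the $L^\infty$ density bound; some care is also needed near $\partial\mathcal X$ and $\partial\mathcal Y$, where finite convex functions can a priori fail to be continuous, though this is harmless since $P,Q$ put no mass there.
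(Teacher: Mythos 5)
Your plan reaches the same conclusions and, at the high level, follows the same structure as the paper's proof: cite the existing literature (\cite{Nutz.24,WieselXu.24,GonzalezSanzNutz2024.Scalar}) for parts~(i) and~(iv) and for existence/continuity of the potentials and the first-order conditions, then argue~(ii) and~(iii) by exploiting convexity and the fact that kink sets (boundaries of convex sets) are $Q$-null under \cref{Assumption:reg}. The level-set argument you give for convexity of $\varphi_\eps$, $\psi_\eps$ is precisely the one the paper deploys (in the proof of \cref{le:empirical.potentials}, with $Q_n$ replaced by $Q$), and the convexity of $\mathcal S_x$ together with the boundary identification~\eqref{eq:sectionsBoundaries} is derived the same way, via the observation that a level-$0$ point cannot be a minimizer because that would make $\Phi(x,\varphi_\eps(x))=0\ne\eps$.

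Two steps differ in a genuine (and in both cases valid) way. First, for the $\mathcal C^1$ regularity you invoke the implicit function theorem applied to $\Phi(x,\beta)=\int(\langle x,y\rangle-\beta-\psi_\eps(y))_+\,dQ(y)$, whereas the paper instead shows that $\tfrac{\int_{\mathcal S_x}y\,dQ}{Q(\mathcal S_x)}$ lies in $\partial\varphi_\eps(x)$ by a direct subgradient inequality, then upgrades this to differentiability by observing that $\mathbf 1_{\mathcal S_{x_n}}\to\mathbf 1_{\mathcal S_x}$ $Q$-a.s.\ as $x_n\to x$ (because $Q(\partial\mathcal S_x)=0$), hence $x\mapsto Q(\mathcal S_x)$ and the candidate gradient are continuous and a convex function with a continuous subgradient selection is $\mathcal C^1$. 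The two arguments use the same null-set ingredient; the paper's is a bit lighter because it never needs $\Phi$ itself to be $\mathcal C^1$ in a neighborhood, while yours requires checking $Q$-nullity of the kink set at \emph{all} nearby $(x,\beta)$, which does hold (since $\Phi>0$ forces the level set to have nonempty interior) but costs an extra remark. Second, for the uniform bound~\eqref{eq:sectionsBounds} the paper simply combines $Q(\mathcal S_x)>0$ with the continuity of $x\mapsto Q(\mathcal S_x)$ just established and compactness of $\mathcal X$; you instead observe that the optimality condition together with $\xi_\eps=\eps\,\tfrac{d\pi_\eps}{d(P\otimes Q)}$ gives $\eps\le\eps\,\|d\pi_\eps/d(P\otimes Q)\|_\infty Q(\mathcal S_x)$, yielding the explicit constant $\delta=\|d\pi_\eps/d(P\otimes Q)\|_\infty^{-1}$. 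Your version is more constructive (it gives a formula for $\delta$), but requires importing an $L^\infty$ bound on the optimal density, which in this setting is available (indeed the density equals $\tfrac1\eps(\xi_\eps)_+$ and $\xi_\eps$ is bounded on the compact supports once the potentials are known to be continuous), so no circularity arises. Both routes are correct; the paper's compactness argument is shorter and avoids the extra import, while yours quantifies~$\delta$.
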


We mention that the continuous differentiability of the population potentials will be improved to $\mathcal{C}^{1,1}$-regularity in \cref{co:potentialGradientsAreLipschitz}. We will find it convenient to use both $(f_\eps,g_\eps)$ and $(\varphi_\eps,\psi_\eps)$, and refer to either as potentials. The former pair helps to make the main results more comparable to the related literature whereas the proofs use the latter, in order to benefit from its convexity properties. 

\subsection{Empirical setting}
Next, we turn to the empirical setting. Let $X_1, \dots, X_n\overset{\rm iid}{\sim} P$ and $Y_1, \dots, Y_n\overset{\rm iid}{\sim} Q$ be independent samples, and denote by $P_n$ and $Q_n$ the corresponding empirical measures. For a fixed realization of the samples and hence of the empirical measures, we can again consider the primal and dual problems~\eqref{QOT} and~\eqref{DQOT}, now with the empirical measures as marginals instead of $(P,Q)$. The primal problem again has a unique solution, denoted $\pi_n\in\Pi(P_n,Q_n)$. Again, there exists a pair $(f_n,g_n)$ solving the dual and describing the primal optimizer via the analogue of~\eqref{eq:primal-dual} with $(P_n,Q_n)$ instead of $(P,Q)$. One difference is that, because the marginal supports are now discrete and hence not connected, $(f_n,g_n)$ are in general non-unique. Another difference is that $(f_n,g_n)$ are a priori only defined on those discrete supports. We may, however, extend them continuously to the population supports $\mathcal{X}$ and $\mathcal{Y}$ (or even all of $\R^d$), in such a way that the extension solves the first-order condition on those sets. Conversely, any such solution defines valid potentials. Passing again to transformed potentials $(\varphi_n,\psi_n)$, the latter satisfy convexity and Lipschitz properties that will be useful below. %

\begin{lemma}[Empirical potentials]\label{le:empirical.potentials}
    Given a realization of the empirical marginals $(P_n,Q_n)$, there exists a pair $(f_n,g_n)\in\mathcal{B}_\oplus$, called empirical potentials, solving the dual problem~\eqref{DQOT} for~$(P_n,Q_n)$ and such that 
    $(\varphi_n,\psi_n)\in\mathcal{B}_\oplus$ defined by 
    \begin{align}\label{eq:convexEmpPotentials}
        \varphi_n(x)=\frac{\|x\|^2}{2}-f_n(x), \qquad \psi_n(y)=\frac{\|y\|^2}{2}-g_n(y)
    \end{align}
    satisfy
    \begin{equation}\label{empirical_optimality}
    \begin{cases}
      \eps=  \int  \left( \langle x, y \rangle - \varphi_n(x)- \psi_n(y) \right)_+ dQ_n(y) & \text{ for all } x\in \mathcal{X},\\  
      \eps=  \int  \left( \langle x, y \rangle - \varphi_n(x)- \psi_n(y)\right)_+ dP_n(x) &\text{ for all } y\in \mathcal{Y}.
    \end{cases}
    \end{equation}    
    Moreover, $\varphi_n: \mathcal{X}\to\R$ is convex and $\mathcal{L}_d$-a.e.\ differentiable with 
    \begin{align}\label{eq:emp-gradient-formula}
    \nabla  \varphi_n(x)= \frac{\int_{\hat{\mathcal{S}}_x} y dQ_n(y)}{Q_n(\hat{\mathcal{S}}_x)} \quad\mbox{for $\mathcal{L}_d$-a.e.~$x\in \mathcal{X}$},  
    \end{align}  
    where $\hat{\mathcal{S}}_x= \{y\in \mathcal{Y}: \varphi_n(x)+ \psi_n(y)\leq \langle x, y \rangle\}$. In particular, $\varphi_n$ is Lipschitz with constant $\|\mathcal{Y}\|_\infty$. An analogous result holds for $\psi_n$.
\end{lemma}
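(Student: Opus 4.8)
The plan is to mirror \cref{le:population.potentials}, now for the discrete marginals $(P_n,Q_n)$; the extra issue is that the empirical potentials initially live only on the finite supports and must be extended to $\mathcal{X},\mathcal{Y}$ while preserving the first-order identity. \emph{Step 1 (potentials on the atoms).} For discrete marginals the primal~\eqref{QOT} is a strictly convex quadratic program over the compact, nonempty transportation polytope $\Pi(P_n,Q_n)$, hence has a unique minimizer $\pi_n$. Its constraints are affine, so at $\pi_n$ there exist KKT multipliers $(f_n,g_n)$ for the two marginal constraints; these maximize the Lagrangian dual, which is exactly~\eqref{DQOT} for $(P_n,Q_n)$, and the KKT conditions yield the primal-dual relation $\tfrac{d\pi_n}{d(P_n\otimes Q_n)}(x,y)=\tfrac1\eps\big(f_n(x)+g_n(y)-\tfrac12\|x-y\|^2\big)_+$. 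Writing out the two marginal constraints of $\pi_n$ and dividing by the (strictly positive) atom masses gives~\eqref{empirical_optimality} at the atoms after substituting $\varphi_n,\psi_n$ from~\eqref{eq:convexEmpPotentials}.

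\emph{Step 2 (extension, Lipschitz bound, convexity).} For fixed $x\in\mathcal{X}$ the function $t\mapsto F(x,t):=\int(\langle x,y\rangle-t-\psi_n(y))_+\,dQ_n(y)$ is continuous, convex in $t$, ranges over $(0,\infty)$, and is strictly decreasing wherever positive; hence $\eps=F(x,t)$ has a unique root, which we declare to be $\varphi_n(x)$. By Step~1 this coincides with $\tfrac{\|x\|^2}{2}-f_n(x)$ at the atoms of $P_n$, so it extends $\varphi_n$ to all of $\mathcal{X}$; symmetrically we extend $\psi_n$ to $\mathcal{Y}$ via the companion equation. Because the dual functional depends only on atom values, the extended $(f_n,g_n)$ still solves the dual, and both lines of~\eqref{empirical_optimality} now hold on all of $\mathcal{X}$ and $\mathcal{Y}$, with no circularity: the $x$-equation reads $\psi_n$ only at atoms of $Q_n$ and the $y$-equation reads $\varphi_n$ only at atoms of $P_n$. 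Shifting the root by $\|\mathcal{Y}\|_\infty\|x-x'\|$ and noting that each summand of $F(x',\cdot)$ increases under this shift relative to the corresponding summand of $F(x,\varphi_n(x))$ gives $|\varphi_n(x)-\varphi_n(x')|\le\|\mathcal{Y}\|_\infty\|x-x'\|$; in particular $\varphi_n\in\mathcal{C}(\mathcal{X})$ and $(f_n,g_n)\in\mathcal{B}_\oplus$. Convexity of $\varphi_n$ follows from the implicit characterization: $F$ is jointly convex and nonincreasing in $t$, so with $x_\lambda:=(1-\lambda)x_0+\lambda x_1$ one has $F\big(x_\lambda,(1-\lambda)\varphi_n(x_0)+\lambda\varphi_n(x_1)\big)\le\eps=F\big(x_\lambda,\varphi_n(x_\lambda)\big)$, and strict monotonicity forces $\varphi_n(x_\lambda)\le(1-\lambda)\varphi_n(x_0)+\lambda\varphi_n(x_1)$.

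\emph{Step 3 (gradient formula).} Convexity on the convex set $\mathcal{X}$ (which has nonempty interior by \cref{Assumption:reg}) makes $\varphi_n$ $\mathcal{L}_d$-a.e.\ differentiable on $\operatorname{int}\mathcal{X}$. At such a point $x$, differentiate the identity $\sum_j Q_n(\{y_j\})\big(\langle x,y_j\rangle-\varphi_n(x)-\psi_n(y_j)\big)_+\equiv\eps$ in an arbitrary direction $v$, splitting the atoms of $Q_n$ by the sign of $a_j:=\langle x,y_j\rangle-\varphi_n(x)-\psi_n(y_j)$: atoms with $a_j<0$ contribute $0$, atoms with $a_j>0$ contribute the linear term $Q_n(\{y_j\})(y_j-\nabla\varphi_n(x))\cdot v$, and atoms with $a_j=0$ contribute $Q_n(\{y_j\})\big((y_j-\nabla\varphi_n(x))\cdot v\big)_+$. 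Since the left side is constant its one-sided directional derivative vanishes; comparing the directions $v$ and $-v$ forces $y_j=\nabla\varphi_n(x)$ for every atom with $a_j=0$ and $\sum_{j:\,a_j>0}Q_n(\{y_j\})(y_j-\nabla\varphi_n(x))=0$. As $\hat{\mathcal{S}}_x$ consists of the atoms with $a_j\ge0$ and the $a_j=0$ atoms also equal $\nabla\varphi_n(x)$, summing over $\hat{\mathcal{S}}_x$ yields $\int_{\hat{\mathcal{S}}_x}(y-\nabla\varphi_n(x))\,dQ_n(y)=0$, i.e.\ $\nabla\varphi_n(x)=\dfrac{\int_{\hat{\mathcal{S}}_x}y\,dQ_n(y)}{Q_n(\hat{\mathcal{S}}_x)}$, with $Q_n(\hat{\mathcal{S}}_x)>0$ because the identity equals $\eps>0$. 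Since this gradient is a $Q_n$-weighted average of points of $\mathcal{Y}$, it re-confirms the $\|\mathcal{Y}\|_\infty$-Lipschitz bound. The statements for $\psi_n$ follow by exchanging the roles of $(P_n,\mathcal{X})$ and $(Q_n,\mathcal{Y})$.

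\emph{Main obstacle.} I expect the delicate point to be Step~3: extracting the \emph{exact} gradient formula at a differentiability point of $\varphi_n$ where some atom of $Q_n$ satisfies $a_j=0$, since there the integrand's positive part is nonsmooth---it is the constancy of the first-order identity that pins the formula down, forcing such boundary atoms to coincide with $\nabla\varphi_n(x)$. The remaining steps are comparatively routine, the only mild care being the consistency of the two-sided extension in Step~2.
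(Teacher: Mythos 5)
Your proof is correct and reaches the same conclusions, but the route differs at a couple of points from the paper's argument, most notably in the gradient derivation (your Step~3).

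For existence and the first-order condition, you derive $(f_n,g_n)$ from the KKT system of the finite-dimensional strictly convex QP and then extend them implicitly, whereas the paper simply cites the existence and extension to \cite{Nutz.24}. Both are sound, and your KKT derivation is self-contained. Your convexity argument (Step~2) via the joint convexity of $F(x,t)=\int(\langle x,y\rangle-t-\psi_n(y))_+\,dQ_n(y)$ and its strict monotonicity in $t$ near the root is essentially a repackaging of the paper's argument (they expand $(\cdot)_+$ convexity directly on the integral and invoke the same strict-decrease-near-$\eps$ observation), so that part is equivalent in content.

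The genuine divergence is in Step~3. You differentiate the constraint $\int(\langle x,y\rangle-\varphi_n(x)-\psi_n(y))_+\,dQ_n(y)\equiv\eps$ directionally at a differentiability point, split atoms by the sign of $a_j$, and then compare $v$ with $-v$ to force boundary atoms to coincide with $\nabla\varphi_n(x)$. That works and is correct. The paper bypasses the boundary-atom case you flag as your ``main obstacle'' entirely: it invokes the elementary pointwise inequality $(a)_+-(b)_+\leq \1_{\{a\geq 0\}}(a-b)$ with $a$ evaluated at $x$ and $b$ at $x'$, integrates against $Q_n$, and immediately reads off that $\tfrac{\int_{\hat{\mathcal{S}}_x}y\,dQ_n(y)}{Q_n(\hat{\mathcal{S}}_x)}$ is a subgradient of $\varphi_n$ at every $x$; Rademacher's theorem then upgrades this to the gradient formula $\mathcal{L}_d$-a.e. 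This one-line subgradient trick buys you exactly what you labor to establish about the $a_j=0$ atoms, and avoids the need to argue that the one-sided directional derivatives of the nonsmooth integrand vanish. So your identification of the delicate point is accurate for your chosen approach, but the paper's subgradient inequality shows the point can be sidestepped altogether.
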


In contrast to \cref{co:potentialGradientsAreLipschitz} for the population potentials, the regularity stated above cannot be improved for the empirical potentials: as $x\mapsto Q_n(\hat{\mathcal{S}}_x)$ is integer-valued, the gradient~\eqref{eq:emp-gradient-formula} is discontinuous and fails to exist at certain points (except in trivial cases).

The empirical potentials $(f_n,g_n)$, or equivalently $(\varphi_n, \psi_n)$, are random and possibly non-unique. For the remainder of the paper, we choose and fix one pair; all subsequent statements are valid as long as the selection is measurable and satisfies~\eqref{empirical_optimality}.

The last result of this section is the consistency of the empirical potentials towards the population counterpart.

\begin{lemma}[Consistency]\label{lemma:consistency}
    We have
    $  \lim_{n\to\infty}\| ( \varphi_n, \psi_n) - (\varphi_\eps, \psi_\eps)\|_\oplus =0  $ a.s.
\end{lemma}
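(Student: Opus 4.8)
The plan is to argue via a compactness-and-identification scheme, exploiting the uniform Lipschitz control already recorded for the transformed potentials. First I would normalize the representatives: since $\varphi_n$ is Lipschitz on $\mathcal{X}$ with constant $\|\mathcal{Y}\|_\infty$ and $\psi_n$ is Lipschitz on $\mathcal{Y}$ with constant $\|\mathcal{X}\|_\infty$ (\cref{le:empirical.potentials}), and since adding a constant $a$ to $\varphi_n$ and subtracting it from $\psi_n$ leaves the class in $\mathcal{B}_\oplus$ unchanged, I may fix the representative by requiring, say, $\varphi_n(x_0)=0$ for a fixed $x_0\in\mathcal{X}$. Then $\{\varphi_n\}$ is uniformly bounded and equicontinuous on the compact set $\mathcal{X}$; the first-order condition~\eqref{empirical_optimality} (testing at a fixed $x$) together with $\eps>0$ forces $\psi_n$ to be bounded as well, so $\{\psi_n\}$ is uniformly bounded and equicontinuous on $\mathcal{Y}$. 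By Arzelà--Ascoli, every subsequence has a further subsequence along which $(\varphi_n,\psi_n)\to(\varphi,\psi)$ uniformly, for some convex Lipschitz limit $(\varphi,\psi)$.

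Next I would identify the limit as $(\varphi_\eps,\psi_\eps)$, which by the uniqueness in \cref{le:population.potentials}(ii) suffices. The key is to pass to the limit in~\eqref{empirical_optimality}. Along the subsequence, $Q_n\to Q$ and $P_n\to P$ weakly a.s.\ (Glivenko--Cantelli / Varadarajan), and the integrand $(x,y)\mapsto \bigl(\langle x,y\rangle-\varphi_n(x)-\psi_n(y)\bigr)_+$ converges uniformly on the compact set $\mathcal{X}\times\mathcal{Y}$ to $(x,y)\mapsto\bigl(\langle x,y\rangle-\varphi(x)-\psi(y)\bigr)_+$, because $(\cdot)_+$ is $1$-Lipschitz. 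Combining uniform convergence of the integrand with weak convergence of the measures — e.g.\ by bounding $\bigl|\int h_n\,dQ_n-\int h\,dQ\bigr|\le \|h_n-h\|_\infty + \bigl|\int h\,dQ_n-\int h\,dQ\bigr|$ — gives, for every fixed $x\in\mathcal{X}$,
\[
\eps=\int\bigl(\langle x,y\rangle-\varphi(x)-\psi(y)\bigr)_+\,dQ(y),
\]
and symmetrically the second identity of~\eqref{population_optimality}. Hence $(\varphi,\psi)$ solves the population first-order condition, so $(\varphi,\psi)=(\varphi_\eps,\psi_\eps)$ in $\mathcal{B}_\oplus$. Since every subsequence has a further subsequence converging to the same limit, the whole sequence converges: $\|(\varphi_n,\psi_n)-(\varphi_\eps,\psi_\eps)\|_\oplus\to0$ a.s.

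The main obstacle is the bookkeeping around non-uniqueness and measurability: the empirical potentials are a measurably selected, possibly non-unique pair, so I must make sure the normalization $\varphi_n(x_0)=0$ is compatible with a measurable selection and that the "every subsequence'' argument is run pathwise on the almost-sure event where $P_n\rightsquigarrow P$ and $Q_n\rightsquigarrow Q$. A secondary technical point is justifying that the limit $\psi$ inherits the lower bound needed for $\mathcal{S}_x$ to have positive mass — but this is not actually required for identification; it is automatic once $(\varphi,\psi)=(\varphi_\eps,\psi_\eps)$ via \cref{le:population.potentials}(iii). Everything else (Arzelà--Ascoli, uniform convergence of $(\cdot)_+$-compositions, weak convergence of empirical measures) is routine.
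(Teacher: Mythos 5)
Your proposal is correct and takes essentially the same route as the paper's proof: fix the representative by $\varphi_n(x_0)=0$, extract uniform bounds from the Lipschitz constants and the first-order condition, apply Arzel\`a--Ascoli, pass to the limit in the first-order condition using weak convergence of $(P_n,Q_n)$, and conclude by uniqueness of the population potentials via a subsequence argument. The only cosmetic difference is in how the limit passage is justified (you bound $|\int h_n\,dQ_n - \int h\,dQ|$ by uniform convergence plus weak convergence of $Q_n$, while the paper uses the Glivenko--Cantelli property of the unit Lipschitz ball); both work.
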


\section{The central limit theorems}\label{se:TheCLTs}

This section states our three central limit theorems for the dual potentials, optimal costs and couplings. The theorem for the potentials is stated in the Banach space $\mathcal{B}_{\oplus}$ defined in~\eqref{eq:defBanach} and the limit is described using the linear operator $[\mathbb{L}]_\oplus$, defined as the composition of  
\begin{align}
    \begin{split}
        \label{Definitio-L}
        \mathbb{L}: \mathcal{B}_\oplus&\to \mathcal{C}(\mathcal{X})\times \mathcal{C}(\mathcal{Y})\\
       \left( \begin{array}{c}
  f\\ g
\end{array} \right) &\mapsto \left( \begin{array}{c}
  f\\ g
\end{array} \right) +  \left( \begin{array}{c}
   \frac{1}{Q(\mathcal{S}_{(\cdot)})}\int_{\mathcal{S}_{(\cdot)}}  g(y) dQ(y)\\
 \frac{1}{P(\mathcal{T}_{(\cdot)})}\int_{\mathcal{T}_{(\cdot)}}  f(x) dP(x)
\end{array} \right)
    \end{split}
\end{align}
and the quotient map $[\cdot ]_\oplus: \mathcal{C}(\mathcal{X})\times \mathcal{C}(\mathcal{Y})\to \mathcal{B}_\oplus$. We write $\mathcal{S}_{(\cdot)}$  and $\mathcal{T}_{(\cdot)}$ for the set-valued mappings $x\mapsto \mathcal{S}_{x}$ and $y\mapsto \mathcal{T}_{y}$ defined in~\eqref{eq:sections}; moreover, we use the shorthand
$$\xi_\eps(x,y):=f_\eps(x)+g_\eps(y)- \frac{1}{2} \|x-y\|^2.$$

\begin{theorem}[CLT for potentials]\label{th:CLT.potentials}
The operator $[\mathbb{L}]_\oplus: \mathcal{B}_\oplus\to\mathcal{B}_\oplus$ admits a bounded inverse $[\mathbb{L}]_\oplus^{-1}$ and we have
$$  \sqrt{n}\left(\begin{array}{c}
    f_n- f_\eps\\
       g_n-g_\eps
\end{array}\right)    \overset{\mathcal{B}_\oplus}{ \rightsquigarrow} -[\mathbb{L}]_\oplus^{-1}\left[  \left( \begin{array}{c}
    \frac{\mathbf{G}_Q}{Q(\mathcal{S}_{(\cdot)})}\\  
    \frac{\mathbf{G}_P}{P(\mathcal{T}_{(\cdot)})} 
\end{array} \right)\right]_\oplus, $$
    where $ ({\bf G}_Q, {\bf G}_P )\in \mathcal{C}(\mathcal{X})\times  \mathcal{C}(\mathcal{Y})$ is the unique-in-law pair of  independent,  tight, centered Gaussian processes with covariances 
    \begin{align*}
    \EE[{\bf G}_Q(x) {\bf G}_Q(x')]&= \EE\big[ (\xi_\eps(x,Y)-\EE[\xi_\eps(x,Y)])(\xi_\eps(x',Y)-\EE[\xi_\eps(x',Y)])\big],\\
    \EE[{\bf G}_P(y) {\bf G}_P(y')]&= \EE\big[ (\xi_\eps(X,y)-\EE[\xi_\eps(X,y)])(\xi_\eps(X,y')-\EE[\xi_\eps(X,y')])\big]
    \end{align*}
    for $X\sim P$ and $Y\sim Q$.
\end{theorem}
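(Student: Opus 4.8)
The plan is to derive the central limit theorem for the potentials from a functional delta-method / implicit-function argument applied to the first-order optimality condition~\eqref{population_optimality}--\eqref{empirical_optimality}. The first step is to set up the right ``$Z$-estimator'' framework: define, for $(\varphi,\psi)$ in a neighborhood of $(\varphi_\eps,\psi_\eps)$ inside $\mathcal{B}^{0,1}_\oplus$ and for a probability measure $\mu$ on $\mathcal{X}\times\mathcal{Y}$ (or a pair of marginals), the map
\[
  \Phi(\varphi,\psi;P,Q)(x,y)=\left( \int (\langle x,y'\rangle-\varphi(x)-\psi(y'))_+\,dQ(y')-\eps,\ \int (\langle x',y\rangle-\varphi(x')-\psi(y))_+\,dP(x')-\eps\right),
\]
so that the population condition reads $\Phi(\varphi_\eps,\psi_\eps;P,Q)=0$ and the empirical one $\Phi(\varphi_n,\psi_n;P_n,Q_n)=0$. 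I would then: (i) show $\Phi$ is Fréchet differentiable in $(\varphi,\psi)$ at the population point, with derivative closely related to $[\mathbb L]_\oplus$ (this is \cref{lemma:frechet}, which I may invoke); (ii) show this derivative is boundedly invertible on $\mathcal{B}_\oplus$, giving the operator $[\mathbb L]_\oplus^{-1}$ in the statement---here is where the Lipschitz estimate for the sublevel sets (\cref{prop:bound-Lipschitz}) and the lower bounds $\delta\le Q(\mathcal S_x),P(\mathcal T_y)$ from~\eqref{eq:sectionsBounds} are essential, since the off-diagonal blocks are averaging operators with kernels supported on the sections $\mathcal S_x,\mathcal T_y$; (iii) establish a stochastic-equicontinuity / empirical-process estimate controlling $\sqrt n\,[\Phi(\varphi_n,\psi_n;P_n,Q_n)-\Phi(\varphi_n,\psi_n;P,Q)-\Phi(\varphi_\eps,\psi_\eps;P_n,Q_n)]$, i.e.\ that the ``drift'' in the empirical noise term due to replacing $(\varphi_\eps,\psi_\eps)$ by the consistent estimator $(\varphi_n,\psi_n)$ is $o_P(1)$; and (iv) identify the leading stochastic term $\sqrt n\,\Phi(\varphi_\eps,\psi_\eps;P_n,Q_n)$ as an empirical process that converges weakly in $\mathcal{C}(\mathcal X)\times\mathcal{C}(\mathcal Y)$ to the Gaussian pair $(\mathbf G_Q/Q(\mathcal S_{(\cdot)}),\mathbf G_P/P(\mathcal T_{(\cdot)}))$, after dividing through by the Jacobian-normalizing factors $Q(\mathcal S_x),P(\mathcal T_x)$ coming from the linearization of $s\mapsto(s)_+$.

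For step~(iv), the key observation is that $(s)_+$ has one-sided derivative $\mathbf 1_{\{s>0\}}$, so the linearization of $\int(\langle x,y\rangle-\varphi_\eps(x)-\psi_\eps(y))_+\,d(Q_n-Q)(y)$ at the population point is exactly $\int \mathbf 1_{\{y\in\mathcal S_x\}}(\xi_\eps(x,y)\text{-type increment})\,d(Q_n-Q)(y)$, whose covariance structure matches the stated formulas after recentering (the recentering by $\EE[\xi_\eps(x,Y)]$ reflects the quotient by $\sim_\oplus$, i.e.\ that only $f\oplus g$ is identified). Functional CLT for this process follows from a Donsker-class argument: the relevant function class is $\{y\mapsto \mathbf 1_{\mathcal S_x}(y)\,\xi_\eps(x,y):x\in\mathcal X\}$, and here the VC bound on the statistical complexity of the sections (\cref{pr:VCtool}) does the work that smoothness of $\psi$ would do in the EOT literature---the indicators $\mathbf 1_{\mathcal S_x}$ range over a VC class because the $\mathcal S_x$ are sublevel sets of the convex-in-$x$ function $\langle x,\cdot\rangle-\varphi_\eps(x)-\psi_\eps(\cdot)$, and $\xi_\eps$ is Lipschitz by \cref{le:population.potentials}, so the product class is Donsker with a square-integrable envelope. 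Tightness and the Gaussian limit then come from standard empirical-process theory (e.g.\ uniform CLT over Donsker classes), and uniqueness-in-law of $(\mathbf G_Q,\mathbf G_P)$ is immediate from the specified covariances; independence follows from independence of the $X$- and $Y$-samples.

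Once the three ingredients are in place, the conclusion follows by the usual $Z$-estimator linearization: from $\Phi(\varphi_n,\psi_n;P_n,Q_n)=0=\Phi(\varphi_\eps,\psi_\eps;P,Q)$ and Fréchet differentiability,
\[
  D\Phi(\varphi_\eps,\psi_\eps;P,Q)\,\bigl[(\varphi_n,\psi_n)-(\varphi_\eps,\psi_\eps)\bigr]
   = -\,\bigl[\Phi(\varphi_\eps,\psi_\eps;P_n,Q_n)-\Phi(\varphi_\eps,\psi_\eps;P,Q)\bigr]+o_P(n^{-1/2}),
\]
so inverting the derivative and multiplying by $\sqrt n$ gives the claimed limit, with the $1/Q(\mathcal S_{(\cdot)})$ and $1/P(\mathcal T_{(\cdot)})$ factors appearing precisely as the normalization turning $D\Phi$ into $[\mathbb L]_\oplus$. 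Translating back from $(\varphi_n,\psi_n)$ to $(f_n,g_n)$ is trivial since the transformation~\eqref{eq:convexPopPotentials} is an affine shift by the fixed function $\|\cdot\|^2/2$, which drops out of the difference.

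I expect step~(iii)---the stochastic equicontinuity estimate handling the composition of the non-smooth nonlinearity $(s)_+$ with the fluctuating estimator $(\varphi_n,\psi_n)$---to be the main obstacle. The difficulty is that the naive second-order remainder of $(s)_+$ is not controlled pointwise (the weak second derivative is a Dirac-type object on $\partial\mathcal S_x$), so one cannot simply Taylor-expand; instead the remainder must be controlled in an averaged/integrated sense, using that the boundary layers $\{|\langle x,y\rangle-\varphi_n(x)-\psi_n(y)|\le t\}$ have $Q$-mass $O(t)$ uniformly in $x$. This uniform boundary-layer estimate is exactly the content of the Lipschitz-in-$\beta$ property of the sublevel sets (\cref{prop:bound-Lipschitz}) combined with the bounded density assumption in \cref{Assumption:reg}; pairing it with the VC bound to get the estimate uniformly over $x\in\mathcal X$ and over the random $(\varphi_n,\psi_n)$ in a shrinking neighborhood is the technical heart of the argument.
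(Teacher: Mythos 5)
Your plan reproduces the paper's Z-estimation scheme faithfully at the structural level --- the same first-order-condition operators $\Phi,\Phi_n$, the same four ingredients (Fr\'echet differentiability, bounded invertibility of the derivative, consistency, a remainder/cross-term estimate), and a correct identification of the cross-term as the technical heart of the argument, to be tamed by the Lipschitz-in-$\beta$ property of the sublevel sets (\cref{prop:bound-Lipschitz}) together with the VC bound (\cref{pr:VCtool}). Two of the steps, however, would not close as you have sketched them.

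First, invertibility of $[\mathbb L]_\oplus$. You attribute it to \cref{prop:bound-Lipschitz} and the uniform lower bounds $Q(\mathcal S_x),P(\mathcal T_y)\geq\delta$ in \eqref{eq:sectionsBounds}, but those facts give boundedness of the averaging operators, not injectivity or surjectivity of ${\rm I}+\mathbb A$. The paper's proof (\cref{lemma:invert}) goes through the Fredholm alternative: the off-diagonal operator $\mathbb A$ is compact, so $[{\rm I}+\mathbb A]_\oplus$ is invertible iff $\mathbb A(f,g)\sim_\oplus -(f,g)$ forces $(f,g)\sim_\oplus 0$; both compactness and this kernel-triviality are nontrivial facts imported from prior work. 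Gesturing at the kernels of the averaging operators does not substitute for this mechanism, and it is not clear how you would establish bijectivity from the ingredients you name.

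Second, the cross-term/remainder rate. The fundamental-theorem-of-calculus decomposition of $(\xi_n)_+-(\xi_\eps)_+$ extracts a multiplicative factor $\|\xi_n-\xi_\eps\|_{0,1}$ (a Lipschitz norm), while the VC/Donsker control furnishes an $o_{\mathbb{P}}(1)$ bound against test functions in the Lipschitz unit ball. But the Z-estimator machinery needs the cross-term to be $o_{\mathbb{P}}\left(\|(\varphi_n,\psi_n)-(\varphi_\eps,\psi_\eps)\|_\oplus+n^{-1/2}\right)$ in the \emph{sup-norm} quotient, so one must convert the $\|\cdot\|_{\oplus,1}$ factor back into $\|\cdot\|_\oplus+n^{-1/2}$. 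The paper does this via the statistical gradient estimate $\|\nabla\varphi_n-\nabla\varphi_\eps\|_\infty=\mathcal O_{\mathbb{P}}\left(n^{-1/2}+\|(\varphi_n,\psi_n)-(\varphi_\eps,\psi_\eps)\|_\oplus\right)$ of \cref{lemma:derivative-estimates-empirical-to-pop}, itself a nontrivial application of \cref{pr:VCtool}; your proposal omits this bridge, and without it the rate bookkeeping does not close. A smaller misattribution: you route the functional CLT for the leading term $\sqrt n\left(\Phi_n(\varphi_\eps,\psi_\eps)-\Phi(\varphi_\eps,\psi_\eps)\right)$ through the VC bound, but that term is an i.i.d.\ average of uniformly Lipschitz functions $(\xi_\eps(x,\cdot))_+$ and the paper dispatches it directly with a CLT for processes with Lipschitz sample paths; the VC machinery is reserved entirely for the remainder.
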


The proof of \cref{th:CLT.potentials} is given in \cref{se:proof.CLT.potentials}. Next, we state the central limit theorem for the optimal costs.

\begin{theorem}[CLT for costs]\label{th:CLT.cost}
   We have 
    $$ \sqrt{n}\big( {\rm QOT}(P_n,Q_n) -{\rm QOT}(P,Q) \big)\xrightarrow{w} N(0,\sigma^2), $$
    where the variance $\sigma^2$ is that of the random variable
    \begin{align}\label{eq:variance.CLT.cost}
	f_\eps(X) + g_\eps(Y) - \frac{1}{2\eps}\bigg(\int \left(\xi_\eps(x,Y)\right)_+^2 dP(x)
	+ \int \left(\xi_\eps(X,y) \right)_+^2 dQ(y) \bigg)
    \end{align}
    for $(X,Y)\sim P\otimes Q$.
\end{theorem}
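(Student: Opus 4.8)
The plan is to deduce the cost CLT from the potential CLT (\cref{th:CLT.potentials}) by a duality sandwich. For probability measures $\mu,\nu$ on $\R^d$ and $(f,g)\in\mathcal{C}(\mathcal{X})\times\mathcal{C}(\mathcal{Y})$ set
$$\Phi_{\mu,\nu}(f,g):=\int f\,d\mu+\int g\,d\nu-\frac{1}{2\eps}\int\Bigl(f(x)+g(y)-\tfrac12\|x-y\|^2\Bigr)_+^2\,d(\mu\otimes\nu)(x,y),$$
so that ${\rm QOT}(P,Q)=\Phi_{P,Q}(f_\eps,g_\eps)$ and ${\rm QOT}(P_n,Q_n)=\Phi_{P_n,Q_n}(f_n,g_n)$ by strong duality; the value is unchanged if we replace $(f_n,g_n)$ and $(f_\eps,g_\eps)$ by $\sim_\oplus$-equivalent representatives, and we choose these so that $\|f_n-f_\eps\|_\infty+\|g_n-g_\eps\|_\infty=O_P(n^{-1/2})$, which is possible by \cref{th:CLT.potentials} (and $\to0$ a.s.\ already by \cref{lemma:consistency}). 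Using optimality of $(f_n,g_n)$ for $\Phi_{P_n,Q_n}$ and of $(f_\eps,g_\eps)$ for $\Phi_{P,Q}$,
$$A_n:=\Phi_{P_n,Q_n}(f_\eps,g_\eps)-\Phi_{P,Q}(f_\eps,g_\eps)\le {\rm QOT}(P_n,Q_n)-{\rm QOT}(P,Q)\le \Phi_{P_n,Q_n}(f_n,g_n)-\Phi_{P,Q}(f_n,g_n)=:C_n,$$
with $C_n-A_n\ge0$. By this squeeze and Slutsky's lemma it then suffices to prove $\sqrt n A_n\xrightarrow{w}N(0,\sigma^2)$ and $\sqrt n(C_n-A_n)\to0$ in probability.

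For the leading term, writing $P_n\otimes Q_n-P\otimes Q=(P_n-P)\otimes Q+P\otimes(Q_n-Q)+(P_n-P)\otimes(Q_n-Q)$, the cross term applied to $(\xi_\eps)_+^2$ is a degenerate second-order quantity of order $O_P(n^{-1})$ (bound its $L^2$-norm by conditioning on one of the two samples). Hence $\sqrt n A_n=\sqrt n\int h_P\,d(P_n-P)+\sqrt n\int h_Q\,d(Q_n-Q)+o_P(1)$, with $h_P(x):=f_\eps(x)-\frac{1}{2\eps}\int(\xi_\eps(x,y))_+^2\,dQ(y)$ and $h_Q(y):=g_\eps(y)-\frac{1}{2\eps}\int(\xi_\eps(x,y))_+^2\,dP(x)$. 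Since $h_P,h_Q$ are bounded and the two samples are independent, the classical CLT gives $\sqrt n A_n\xrightarrow{w}N\bigl(0,\Var(h_P(X))+\Var(h_Q(Y))\bigr)$ for $X\sim P$, $Y\sim Q$, and a short computation identifies this variance with $\sigma^2$, because $h_P(X)+h_Q(Y)$ for independent $X,Y$ is exactly the random variable in~\eqref{eq:variance.CLT.cost}.

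For the remainder, the point is that
$$C_n-A_n=\int(f_n-f_\eps)\,d(P_n-P)+\int(g_n-g_\eps)\,d(Q_n-Q)-\frac{1}{2\eps}\int\bigl((\xi_n)_+^2-(\xi_\eps)_+^2\bigr)\,d(P_n\otimes Q_n-P\otimes Q),$$
$\xi_n(x,y):=f_n(x)+g_n(y)-\tfrac12\|x-y\|^2$, collapses once one inserts $(\xi_n)_+^2-(\xi_\eps)_+^2=2(\xi_\eps)_+\bigl((f_n-f_\eps)\oplus(g_n-g_\eps)\bigr)+\rho_n$ with $0\le\rho_n\le\bigl((f_n-f_\eps)\oplus(g_n-g_\eps)\bigr)^2$: the population first-order conditions $\int(\xi_\eps(x,y))_+\,dQ(y)=\int(\xi_\eps(x,y))_+\,dP(x)=\eps$ (\cref{le:population.potentials}) make the $2(\xi_\eps)_+$-contribution cancel the two linear terms exactly. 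What survives are (i) terms that factor as $\|f_n-f_\eps\|_\infty$ (or $\|g_n-g_\eps\|_\infty$) times a fluctuation of the form $\sup_{y}\bigl|\int(\xi_\eps(x,y))_+\,d(P_n-P)(x)\bigr|$, and its symmetric and $(P_n-P)\otimes(Q_n-Q)$-analogues; and (ii) the quadratic remainder $\int\rho_n\,d(P_n\otimes Q_n-P\otimes Q)$, bounded by $2\|(f_n-f_\eps)\oplus(g_n-g_\eps)\|_\infty^2=O_P(n^{-1})$, hence $o_P(n^{-1/2})$. For (i), the class $\{x\mapsto(\xi_\eps(x,y))_+:y\in\mathcal{Y}\}$ is uniformly bounded with level sets the sections $\mathcal{T}_y$ --- sublevel sets of the fixed convex function $\varphi_\eps$ shifted by affine functions, hence a VC class --- so by the paper's VC tool (\cref{pr:VCtool}) it is $P$-Donsker and the fluctuation is $O_P(n^{-1/2})$; combined with $\|f_n-f_\eps\|_\infty\to0$ this makes (i) $o_P(n^{-1/2})$ as well (the cross-measure analogues are handled by reassociating so that one empirical fluctuation hits this fixed VC class, leaving a product of two vanishing factors). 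Therefore $\sqrt n(C_n-A_n)\to0$ in probability.

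The main obstacle is term (i). The naive route --- bounding $\sqrt n(C_n-A_n)$ directly through empirical-process oscillation of the map $(f,g)\mapsto\int(f-f_\eps)\,d(P_n-P)+\cdots$ --- fails because $f_n-f_\eps$ lies only in a ball of Lipschitz functions on $\mathcal{X}$, which is not $P$-Donsker once $d\ge2$; this is why QOT is usually suspected of the curse of dimensionality. The resolution is the structural cancellation from the first-order conditions, which trades the ill-behaved direction-indexed oscillation for the set-indexed oscillation of the sections $\mathcal{S}_x,\mathcal{T}_y$, controlled by their bounded VC complexity (\cref{pr:VCtool}) and their Lipschitz regularity (\cref{prop:bound-Lipschitz}); the $n^{-1/2}$ rate for the quadratic remainder comes from \cref{th:CLT.potentials}. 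By comparison, the duality sandwich and the variance identification are routine.
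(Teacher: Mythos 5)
Your proof is correct but proceeds differently from the paper in the crucial step, so let me compare the two.

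Both proofs start from the same duality sandwich $A_n\le {\rm QOT}_\eps(P_n,Q_n)-{\rm QOT}_\eps(P,Q)\le C_n$ with $A_n=\Phi_{P_n,Q_n}(f_\eps,g_\eps)-\Phi_{P,Q}(f_\eps,g_\eps)$ and $C_n=\Phi_{P_n,Q_n}(f_n,g_n)-\Phi_{P,Q}(f_n,g_n)$, and both identify $\sqrt n A_n$ as the source of the Gaussian limit via a Hoeffding-type decomposition; the variance identification is the same in both.  The difference is in how one shows $\sqrt n(C_n-A_n)=o_\PP(1)$.  The paper writes $C_n-A_n=\int(\tau_n-\tau_\eps)\,d(P_n\otimes Q_n-P\otimes Q)$, factors out the Lipschitz norm $\|\tau_n-\tau_\eps\|_{0,1}$, bounds the complementary factor by a Glivenko--Cantelli argument for the $\mathcal C^{0,1}$ unit ball, and then invokes the gradient estimate (\cref{lemma:derivative-estimates-empirical-to-pop}) plus \cref{th:CLT.potentials} to get $\|\tau_n-\tau_\eps\|_{0,1}=\mathcal O_\PP(n^{-1/2})$.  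You instead Taylor-expand $(\cdot)_+^2$ around $\xi_\eps$ with nonnegative remainder, and observe that the population first-order conditions~\eqref{population_optimality} make the first-order contribution of $2(\xi_\eps)_+\bigl((f_n-f_\eps)\oplus(g_n-g_\eps)\bigr)$ against $(P_n-P)\otimes Q$ and $P\otimes(Q_n-Q)$ \emph{cancel exactly} the linear terms $\int(f_n-f_\eps)\,d(P_n-P)$ and $\int(g_n-g_\eps)\,d(Q_n-Q)$.  What survives is a sum of products of $\|f_n-f_\eps\|_\infty$ (or $\|g_n-g_\eps\|_\infty$) with empirical fluctuations indexed by a fixed compact parameter set, plus a quadratic remainder $\mathcal O_\PP(n^{-1})$; each is $o_\PP(n^{-1/2})$ once you feed in the potential CLT.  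This is a genuine alternative: your route bypasses a separate invocation of the gradient estimate and exposes the structural cancellation coming from the first-order conditions, at the cost of a longer case analysis; the paper's route is shorter once \cref{lemma:derivative-estimates-empirical-to-pop} is available but treats the integrand as a black box.

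One minor inaccuracy: you justify $\sup_y\bigl|\int(\xi_\eps(x,y))_+\,d(P_n-P)(x)\bigr|=\mathcal O_\PP(n^{-1/2})$ by appealing to \cref{pr:VCtool}, saying the ``level sets'' are the sections $\mathcal T_y$ and hence a VC class.  That invocation is not quite right: \cref{pr:VCtool} controls fluctuations of \emph{indicator} functions $\1_{\mathcal S_x(\cdot)}$ times a fixed multiplier $g$, not the positive-part functions $(\xi_\eps(\cdot,y))_+$ themselves, and the superlevel sets of $(\xi_\eps(\cdot,y))_+$ are not the same thing as its subgraph class.  The conclusion is nonetheless correct via a simpler argument: $\{x\mapsto(\xi_\eps(x,y))_+:y\in\mathcal Y\}$ is a uniformly bounded family, Lipschitz in the parameter $y$ ranging over a compact set, hence has polynomial bracketing entropy and is $P$-Donsker; equivalently, the process $y\mapsto\sqrt n\int(\xi_\eps(x,y))_+\,d(P_n-P)(x)$ has Lipschitz sample paths and satisfies a CLT in $\mathcal C(\mathcal Y)$, which is precisely the fact the paper establishes in the last step of the proof of \cref{th:CLT.potentials}.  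With that reference repaired, the proof stands.
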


The proof is given in \cref{Section:CLT-plans-and-cost}. Finally, we state the central limit theorem for the optimal couplings; the proof is also reported in \cref{Section:CLT-plans-and-cost}. To describe the limiting variance in a more compact form,  we use the notation $\oplus(f,g):=f\oplus g$.

\begin{theorem}[CLT for couplings]\label{th:CLT.plans}
    For any bounded measurable function $\eta: \mathcal{X}\times\mathcal{Y}\to\mathbb{R}$, 
    $$ \sqrt{n}\left(\int \eta d(\pi_n - \pi_\eps) \right) \xrightarrow{w}  N\left(0, \frac{\sigma^2(\eta)}{\eps^2}\right) $$
    with $\sigma^2(\eta)=\Var(V_X + V_Y)$ where, for $Z\in\{X,Y\}$,
    \begin{align*}
      V_Z :=\mathbb{E}\Bigg[  \int  U(x,y,X,Y) \bar{\eta}(x,y) (\xi_\eps(x,y))_+ d(P\otimes Q)(x,y)
        -\bar{\eta}(X,Y)(\xi_\eps(X,Y))_+  \Bigg\vert Z\Bigg]
    \end{align*}
    for $(X,Y)\sim P\otimes Q$ and $ \bar{\eta}:= \eta - \int_{\xi_\eps\geq 0} \eta  d(P\otimes Q)$ and
    $$
     U(\cdot,\cdot,X,Y) := \oplus\left( [\mathbb{L}]_\oplus^{-1}\left[\left( \begin{array}{c}
           \frac{  (\xi_\eps( \cdot ,Y))_+  } {Q( \mathcal{S}_{(\cdot)})}   \\
            \frac{ (\xi_\eps(X, \cdot ))_+ }{P(\mathcal{T}_{(\cdot)})}  
        \end{array}\right)\right]_{\oplus} \right) .
    $$
\end{theorem}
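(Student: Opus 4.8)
The plan is to derive \cref{th:CLT.plans} from \cref{th:CLT.potentials} via an explicit linearization of the map $(f,g,P,Q)\mapsto \int \eta\, d\pi$, combined with the functional delta method. First I would write, using the primal–dual relation~\eqref{eq:primal-dual} and its empirical analogue,
\begin{align*}
\int \eta\, d\pi_n - \int \eta\, d\pi_\eps
= \frac1\eps\int \eta(x,y)\Big[(\xi_n(x,y))_+ - (\xi_\eps(x,y))_+\Big]\,d(P_n\otimes Q_n)(x,y) \\
+ \frac1\eps\int \eta(x,y)(\xi_\eps(x,y))_+\,d(P_n\otimes Q_n - P\otimes Q)(x,y),
\end{align*}
where $\xi_n(x,y):=f_n(x)+g_n(y)-\tfrac12\|x-y\|^2$. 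The second term is a standard empirical-process term in the fixed (bounded, measurable) integrand $(x,y)\mapsto \eta(x,y)(\xi_\eps(x,y))_+/\eps$, and its limit is Gaussian by the multivariate CLT applied to the product empirical measure; one should be slightly careful here because $P_n\otimes Q_n$ is not the empirical measure of an i.i.d.\ sample from $P\otimes Q$, but $\sqrt n (P_n\otimes Q_n - P\otimes Q)$ still converges (in the relevant finite-dimensional, and in fact uniform, sense) to $\mathbf{G}_P\otimes Q + P\otimes \mathbf{G}_Q$ by the classical two-sample empirical process argument. For the first term, the key point is that the map $t\mapsto (t)_+$ is Lipschitz with derivative $\mathbf 1_{\{t>0\}}$ away from $t=0$, and that the ``bad'' set $\{\xi_\eps=0\}$ is the boundary $\partial(\spt\pi_\eps)$, which has $(P\otimes Q)$-measure zero since $\mathcal S_x=\{\xi_\eps(x,\cdot)\ge 0\}$ has a convex interior and $Q$ has a bounded density (so $\partial\mathcal S_x$ is $Q$-null for each $x$, and Fubini gives the claim). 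Hence, up to $o_\PP(n^{-1/2})$, the first term equals $\frac1\eps\int \eta(x,y)\,\mathbf 1_{\{\xi_\eps(x,y)\ge 0\}}\,(\xi_n-\xi_\eps)(x,y)\,d(P\otimes Q)(x,y)$, i.e.\ $\frac1\eps\int_{\xi_\eps\ge 0}\eta\,\big((f_n-f_\eps)\oplus(g_n-g_\eps)\big)\,d(P\otimes Q)$.

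The second step is to combine these two pieces and substitute the linear expansion of $(f_n-f_\eps,g_n-g_\eps)$ provided by \cref{th:CLT.potentials}. After replacing $\eta$ by its centered version $\bar\eta=\eta-\int_{\xi_\eps\ge 0}\eta\,d(P\otimes Q)$ (legitimate because $(f_n-f_\eps)\oplus(g_n-g_\eps)$ integrated against the constant $\mathbf 1_{\{\xi_\eps\ge 0\}}\,d(P\otimes Q)$ is handled separately, or simply because the $\oplus$-ambiguity drops out), the functional $(f,g)\mapsto \int_{\xi_\eps\ge0}\bar\eta\,(f\oplus g)\,d(P\otimes Q)$ is a bounded linear functional on $\mathcal B_\oplus$, and composing it with the bounded operator $[\mathbb L]_\oplus^{-1}$ from \cref{th:CLT.potentials} shows $\sqrt n\int\eta\,d(\pi_n-\pi_\eps)$ converges weakly to a centered Gaussian. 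The work is then bookkeeping: expressing the resulting limit as $\tfrac1\eps(V_X+V_Y)$. The term $-\bar\eta(X,Y)(\xi_\eps(X,Y))_+$ inside $V_Z$ comes from the empirical-process term $\frac1\eps\int\eta(\xi_\eps)_+\,d(P_n\otimes Q_n-P\otimes Q)$, written via Hájek projection as a sum of a $P$-part and a $Q$-part; the term $\int U(x,y,X,Y)\bar\eta(x,y)(\xi_\eps(x,y))_+\,d(P\otimes Q)$ comes from the potential-fluctuation term, where $U(\cdot,\cdot,X,Y)=\oplus\big([\mathbb L]_\oplus^{-1}[(\ldots)]_\oplus\big)$ is exactly the Hájek projection of the Gaussian limit in \cref{th:CLT.potentials} evaluated along the influence functions $\frac{(\xi_\eps(\cdot,Y))_+}{Q(\mathcal S_{(\cdot)})}$ and $\frac{(\xi_\eps(X,\cdot))_+}{P(\mathcal T_{(\cdot)})}$ (these are precisely the integrands whose empirical averages generate $\mathbf G_Q$ and $\mathbf G_P$, by the covariance formulas in \cref{th:CLT.potentials} and the defining relation~\eqref{population_optimality}). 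Matching the conditional expectations given $Z\in\{X,Y\}$ then yields the stated $V_Z$, and $\sigma^2(\eta)=\Var(V_X+V_Y)$ by independence of the two samples.

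The main obstacle is the linearization of $t\mapsto(t)_+$ uniformly enough to control the first term at the $n^{-1/2}$ scale. Naively one has $|(\xi_n)_+-(\xi_\eps)_+ - \mathbf 1_{\{\xi_\eps\ge0\}}(\xi_n-\xi_\eps)|\le 2|\xi_n-\xi_\eps|\,\mathbf 1_{\{|\xi_\eps|\le|\xi_n-\xi_\eps|\}}$, so the error in the first term is bounded by $\frac2\eps\|\eta\|_\infty\int |\xi_n-\xi_\eps|\,\mathbf 1_{\{|\xi_\eps|\le|\xi_n-\xi_\eps|\}}\,d(P_n\otimes Q_n)$; since $\|\xi_n-\xi_\eps\|_\infty=O_\PP(n^{-1/2})$ by \cref{th:CLT.potentials} (or even the consistency lemma plus the CLT), this is controlled by $\frac{C}{\sqrt n}\,(P_n\otimes Q_n)\big(\{|\xi_\eps|\le Cn^{-1/2}\}\big)$, and the needed estimate is that $(P_n\otimes Q_n)\big(\{0<|\xi_\eps|\le r\}\big)=o_\PP(1)$ as $r\to0$ (uniformly enough), which follows from $(P\otimes Q)(\{|\xi_\eps|\le r\})\to 0$ as $r\downarrow 0$ together with a Glivenko–Cantelli-type control over the class of ``slabs'' $\{|\xi_\eps|\le r\}$. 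Here the VC machinery of \cref{pr:VCtool}, applied to the family of $\beta$-sublevel sets $\mathcal S_x^\beta$ (and their $\mathcal T_y$ analogues), gives exactly the uniform control needed: the relevant slabs are differences of such sublevel sets and hence form a VC class, so the empirical measure of thin slabs vanishes uniformly. An alternative, cleaner route is to first prove the ``plug-in'' identity $\sqrt n\int\eta\,d(\pi_n-\pi_\eps)=\sqrt n\,\Phi(f_n,g_n,P_n,Q_n)+o_\PP(1)$ for the explicitly linearized functional $\Phi$ and then invoke joint weak convergence of $(\sqrt n(f_n-f_\eps,g_n-g_\eps),\sqrt n(P_n-P),\sqrt n(Q_n-Q))$ — but the joint convergence itself is what requires the same slab estimate, so the VC bound is unavoidable. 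Everything else — measurability of the selection of empirical potentials, uniqueness-in-law of the Gaussian limit, and the conditional-expectation bookkeeping — is routine given the earlier lemmas.
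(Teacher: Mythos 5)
Your overall architecture mirrors the paper's proof: center $\eta$ to $\bar\eta$, decompose $\int\eta\,d(\pi_n-\pi_\eps)$ into a ``fluctuating-potential'' piece and a ``fixed-integrand empirical'' piece, linearize the map $t\mapsto(t)_+$ around $\xi_\eps$, control the error via VC theory, and represent the leading terms as a two-sample $U$-statistic. The paper formalizes this as a three-way split $\mathcal{E}_n=A_n+B_n+C_n$, handling the cross term $A_n$ with the FTC identity and \cref{pr:VCtoolProduct}, and the population term $B_n$ with a Fréchet-differentiability argument; you instead combine $A_n+B_n$ and replace $(\xi_n)_+-(\xi_\eps)_+$ by $\mathbf 1_{\{\xi_\eps\ge0\}}(\xi_n-\xi_\eps)$ with the direct pointwise bound $\bigl|(\xi_n)_+-(\xi_\eps)_+-\mathbf 1_{\{\xi_\eps\ge0\}}(\xi_n-\xi_\eps)\bigr|\le 2|\xi_n-\xi_\eps|\mathbf 1_{\{|\xi_\eps|\le|\xi_n-\xi_\eps|\}}$, which is a perfectly valid and slightly more elementary linearization.

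There is, however, a genuine gap in the way you conclude ``the first term equals $\frac1\eps\int_{\xi_\eps\ge0}\eta\,(\xi_n-\xi_\eps)\,d(P\otimes Q)$ up to $o_\PP(n^{-1/2})$.'' Your error bound only controls the replacement of $(\xi_n)_+-(\xi_\eps)_+$ by $\mathbf 1_{\{\xi_\eps\ge0\}}(\xi_n-\xi_\eps)$ \emph{inside the $P_n\otimes Q_n$-integral}. After that you still have the linearized expression integrated against $P_n\otimes Q_n$, not $P\otimes Q$, and you silently dropped the remaining term
\[
\frac1\eps\int\eta\,\mathbf 1_{\{\xi_\eps\ge0\}}\,(\xi_n-\xi_\eps)\,d(P_n\otimes Q_n-P\otimes Q).
\]
This is not covered by your slab estimate. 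It is $o_\PP(n^{-1/2})$, but showing that requires factoring out $\|\xi_n-\xi_\eps\|_{0,1}$ and bounding the resulting supremum over a Glivenko--Cantelli class, and crucially the \emph{Lipschitz-norm} rate $\|\xi_n-\xi_\eps\|_{0,1}=\mathcal O_\PP(n^{-1/2})$ comes from the statistical gradient estimate in \cref{lemma:derivative-estimates-empirical-to-pop}, which your proposal never invokes; the uniform rate $\|\xi_n-\xi_\eps\|_\infty=\mathcal O_\PP(n^{-1/2})$ that you do cite (from \cref{th:CLT.potentials}) is not enough. This is exactly what the paper's $A_n$ argument does, so your route does not actually bypass it. Two smaller points: the slab $\{|\xi_\eps|\le r\}$ lives in the product space, so the VC control you need is \cref{pr:VCtoolProduct} (the class $\{\mathcal M(\beta)\}$), not \cref{pr:VCtool}; and your parenthetical remark that passing to $\bar\eta$ is justified ``because the $\oplus$-ambiguity drops out'' is beside the point --- $\oplus$ is well-defined on $\mathcal B_\oplus$ regardless --- the reason is simply that $\pi_n$ and $\pi_\eps$ are both probability measures so the constant part of $\eta$ contributes zero to $\int\eta\,d(\pi_n-\pi_\eps)$.
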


\subsection{Proof methodology} \label{se:proofMethodology}

The key result is \cref{th:CLT.potentials} on the potentials; \cref{th:CLT.cost,th:CLT.plans} on the costs and couplings are derived from it. We follow the scheme of Z-estimation (see \cite[Chapter~3.3]{vanderVaart.1996}), as in~\cite{gonzalezsanz.2025.sparseregularizedoptimaltransport}, based on the fact that the population and empirical potentials are solutions to certain equations. Specifically, the first-order conditions~\eqref{population_optimality} and~\eqref{empirical_optimality}, which we represent with nonlinear operators $\Phi$ and $\Phi_n$ on $\Banach$ as
$$
\left( \begin{array}{c}
   \eps\\
   \eps
\end{array} \right)=  \Phi  \left( \begin{array}{c}
   \varphi_\eps\\
  \psi_\eps
\end{array} \right), \qquad
\left( \begin{array}{c}
   \eps\\
   \eps
\end{array} \right)=  \Phi_n \left( \begin{array}{c}
   \varphi_n\\
  \psi_n
\end{array} \right).
$$
Following the Z-estimation methodology, we need to establish:
\begin{enumerate}
    \item $ \Phi_n(\varphi_\eps,
   \psi_\eps)- \Phi(\varphi_\eps,
   \psi_\eps)$ satisfies a central limit theorem in $\mathcal{B}_\oplus$, 
\item $\Phi$  is Fréchet differentiable at  $(\varphi_\eps,
   \psi_\eps)$ with invertible derivative,
   \item $\|(\varphi_n,
   \psi_n)-(\varphi_\eps,
   \psi_\eps)\|_{\oplus}\to0$ in probability, 
   \item the following expansion holds,
   \begin{multline*}
      \left\| \Phi_n  \left( \begin{array}{c}
   \varphi_\eps\\
   \psi_\eps
\end{array} \right)- \Phi  \left( \begin{array}{c}
   \varphi_\eps\\
   \psi_\eps
\end{array} \right)- \left( \Phi_n  \left( \begin{array}{c}
   \varphi_n\\
   \psi_n
\end{array} \right)- \Phi  \left( \begin{array}{c}
   \varphi_n\\
   \psi_n
\end{array} \right) \right) \right\|_\oplus \\ = o_{\mathbb{P}} \left( \left\|\left( \begin{array}{c}
   \varphi_n-\varphi_\eps\\
   \psi_n-\psi_\eps
\end{array} \right)  \right\|_{\oplus}+{n^{-\frac{1}{2}}}\right).  
   \end{multline*}
\end{enumerate}
Item (i) follows easily from the fact that 
\begin{multline*}
     \Phi_n  \left( \begin{array}{c}
   \varphi_\eps\\
   \psi_\eps
\end{array} \right)- \Phi  \left( \begin{array}{c}
   \varphi_\eps\\
   \psi_\eps
\end{array} \right)\\ =  \left( \begin{array}{c}
 \frac{1}{n} \sum_{i=1}^n\left( \langle \cdot, Y_i\rangle-  \varphi_\eps(\cdot)-\psi_\eps(Y_i)  \right)_+ -    \int  \left( \langle \cdot, y\rangle-    \varphi_\eps(\cdot)-\psi_\eps(y)  \right)_+ dQ(y) \\[.3em]  
   \frac{1}{n} \sum_{i=1}^n\left( \langle  X_i, \cdot \rangle-    \varphi_\eps(X_i)-\psi_\eps(\cdot)  \right)_+-   \int  \left( \langle x, \cdot \rangle-    \varphi_\eps(x)- \psi_\eps(\cdot )  \right)_+ dP(x)
\end{array} \right)
\end{multline*}
is an average of an i.i.d.\ sequence of uniformly Lipschitz random  functions.
For (ii), the Fréchet differentiability (shown in  \cref{lemma:frechet}) hinges on a novel Lipschitz regularity property of the sections $\mathcal{S}_x$ of the optimal support, formalized in \cref{prop:bound-Lipschitz}. More specifically, recognizing that $\mathcal{S}_x$ is the zero-sublevel set of a convex function, we consider the $\beta$-sublevel sets which correspond to a thickening or shrinking of $\mathcal{S}_x$, and establish a Lipschitz estimate w.r.t.\ $\beta$. This property is one of the key technical innovations of the paper. The invertibility of the derivative is shown in \cref{lemma:invert} via the Fredholm alternative, completing~(ii). The consistency (iii) of the potentials was already stated in  \cref{lemma:consistency}; it follows by a standard Arzelà--Ascoli argument. The main task is to establish~(iv). This property follows from two estimates that we state in \cref{lemma:cross-term} and \cref{lemma:derivative-estimates-empirical-to-pop}, respectively:
\begin{multline*}
         \left\| \Phi_n  \left( \begin{array}{c}
   \varphi_\eps\\
   \psi_\eps
\end{array} \right)- \Phi  \left( \begin{array}{c}
   \varphi_\eps\\
   \psi_\eps
\end{array} \right)- \left( \Phi_n  \left( \begin{array}{c}
   \varphi_n\\
   \psi_n
\end{array} \right)- \Phi  \left( \begin{array}{c}
   \varphi_n\\
   \psi_n
\end{array} \right) \right) \right\|_\oplus \\=o_{\mathbb{P}}\left( \| \nabla  \varphi_n -\nabla \varphi_\eps \|_{\infty} +\| \nabla  \psi_n -\nabla \psi_\eps \|_{\infty}  + n^{-\frac{1}{2}} \right)
     \end{multline*}
and 
$$  \| \nabla  \varphi_n -\nabla \varphi_\eps \|_{\infty}+ \| \nabla  \psi_n -\nabla \psi_\eps \|_{\infty} =\mathcal{O}_{\mathbb{P}} \left(\| ( \varphi_n, \psi_n) - (\varphi_\eps, \psi_\eps)\|_{\oplus} + n^{-\frac{1}{2}}\right).$$
In addition to the aforementioned Lipschitz regularity property of the sections of the optimal support (\cref{prop:bound-Lipschitz}), both estimates rely on another key innovation of this paper, namely to utilize VC theory to bound the complexity of $\mathcal{S}_x$ (and its generalizations by thickening); this gives rise to the term $\mathcal{O}_{\mathbb{P}}(n^{-\frac{1}{2}})$. As similar VC arguments are used several times throughout the proof of the central limit theorems, we formulate a slightly more general estimate as an abstract tool (\cref{pr:VCtool}), with the benefit of streamlining the later proofs. To the best of our knowledge, VC theory has not been used previously in the sample complexity analysis of optimal transport or regularized optimal transport.

\section{Regularity estimates}\label{se:RegularityEstimates}

This section provides several estimates which are key to the proof of the central limit theorems, but also of independent interest.

\subsection{Regularity of the population support and potentials}

The first result studies the section $\mathcal{S}_x$ of the optimal support; it can be seen as the zero-sublevel set of a function involving the population potentials $\varphi_\eps, \psi_\eps$ of \cref{le:population.potentials}. The proposition asserts a Lipschitz regularity with respect to the level~$\beta$ which acts as a parameter to define a thickening (or shrinking) of $\mathcal{S}_x$.

\begin{proposition}\label{prop:bound-Lipschitz}
  Define the set-valued mapping 
   \begin{align}\label{eq:SxBetaDefn}
        \R\ni   \beta\mapsto \mathcal{S}_x(\beta):= \{y\in \mathcal{Y}: \varphi_\eps(x)+ \psi_\eps(y)\leq \langle x, y \rangle +\beta\}.
   \end{align}
There exist $ L,\beta_0>0$  such that 
  $$  \sup_{\|g\|_\infty\leq 1} \sup_{x\in\mathcal{X}}\left|{\int_{\mathcal{S}_x(\alpha)} g(y) dQ(y)}-{\int_{\mathcal{S}_x(\beta)} g(y) dQ(y)} \right|\leq L |\alpha-\beta| \quad\mbox{for all }\alpha,\beta\in (-\beta_0,\beta_0),$$
where the outer supremum is taken over measurable functions $g: \mathcal{Y}\to\R$.
\end{proposition}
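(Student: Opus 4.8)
The plan is to reduce the statement to a purely geometric estimate on the sublevel sets of one convex function. Fix $x\in\mathcal X$ and set $h_x(y):=\varphi_\eps(x)+\psi_\eps(y)-\langle x,y\rangle$; this is continuous and convex in $y$ since $\psi_\eps$ is convex, and $\mathcal S_x(\beta)=\{y\in\mathcal Y:h_x(y)\le\beta\}$. For $\alpha\le\beta$ we have $\mathcal S_x(\alpha)\subseteq\mathcal S_x(\beta)$, so the quantity to be bounded is at most $\|g\|_\infty\,Q\big(\mathcal S_x(\beta)\setminus\mathcal S_x(\alpha)\big)$, and since $Q$ has a bounded Lebesgue density (\cref{Assumption:reg}) it suffices to find $L',\beta_0>0$ with
\[
  \sup_{x\in\mathcal X}\Big(\mathcal L_d\big(\mathcal S_x(\beta)\big)-\mathcal L_d\big(\mathcal S_x(\alpha)\big)\Big)\le L'(\beta-\alpha)\qquad\text{for all }\alpha\le\beta\text{ in }(-\beta_0,\beta_0).
\]

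The decisive input is a uniform lower bound on how far $h_x$ dips below $0$: from the first-order condition \eqref{population_optimality}, $\int_{\mathcal Y}(-h_x)_+\,dQ=\eps$, and since $Q$ is a probability measure this forces $\sup_{\mathcal Y}(-h_x)\ge\eps$, so by compactness of $\mathcal Y$ there is $y_x\in\mathcal Y$ with $h_x(y_x)\le-\eps$. I then set $\beta_0:=\tfrac\eps2$ (to be shrunk further below) and claim that for $\alpha\le\beta$ in $(-\beta_0,\beta_0)$,
\[
  \mathcal S_x(\beta)\ \subseteq\ \mathcal S_x(\alpha)\oplus \bar B\!\Big(0,\ \tfrac{4\|\mathcal Y\|_\infty}{\eps}(\beta-\alpha)\Big).
\]
Indeed, given $z\in\mathcal S_x(\beta)$, the function $t\mapsto h_x\big((1-t)z+ty_x\big)$ is convex and bounded above by $(1-t)\beta-t\eps$, which equals $\alpha$ at $t^\ast=(\beta-\alpha)/(\beta+\eps)\in(0,1)$; hence $(1-t^\ast)z+t^\ast y_x\in\mathcal Y$ lies in $\mathcal S_x(\alpha)$ and is within $t^\ast\,\mathrm{diam}(\mathcal Y)\le\tfrac{4\|\mathcal Y\|_\infty}{\eps}(\beta-\alpha)$ of $z$ (using $\beta+\eps\ge\eps/2$ and $\mathrm{diam}(\mathcal Y)\le2\|\mathcal Y\|_\infty$).

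Finally I would estimate the volume of this tube. The set $\mathcal S_x(\alpha)$ is convex and contained in $B(0,\|\mathcal Y\|_\infty)$, so by the Steiner formula (equivalently, the coarea formula applied to the $1$-Lipschitz convex function $w\mapsto\mathrm{dist}(w,\mathcal S_x(\alpha))$) one has $\mathcal L_d\big(\mathcal S_x(\alpha)\oplus\bar B(0,\rho)\big)-\mathcal L_d\big(\mathcal S_x(\alpha)\big)=\int_0^\rho\mathcal H^{d-1}\big(\partial(\mathcal S_x(\alpha)\oplus\bar B(0,t))\big)\,dt$, and by monotonicity of surface area for nested convex bodies each integrand is bounded by $\mathcal H^{d-1}(\partial B(0,\|\mathcal Y\|_\infty+t))$; hence for $\rho\le1$ the increment is at most $d\,\omega_d(\|\mathcal Y\|_\infty+1)^{d-1}\rho$ with $\omega_d=\mathcal L_d(B(0,1))$. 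Taking $\beta_0$ also small enough that $\rho:=\tfrac{4\|\mathcal Y\|_\infty}{\eps}(\beta-\alpha)\le1$ throughout $(-\beta_0,\beta_0)$ and chaining the three displays gives the desired bound with $L$ proportional to $\|q\|_\infty\,\|\mathcal Y\|_\infty(\|\mathcal Y\|_\infty+1)^{d-1}/\eps$, where $\|q\|_\infty$ bounds the density of $Q$. I expect the uniform-depth bound to be the crux: were $h_x$ flat at level $0$, the band $\mathcal S_x(\beta)\setminus\mathcal S_x(\alpha)$ could stay macroscopically large as $\alpha\uparrow0\le\beta$, defeating any Lipschitz estimate, and it is precisely $\eps>0$ in the optimality condition that prevents this; the remaining steps are routine convex geometry (and if one prefers to avoid the Steiner formula, the shell $\mathcal S_x(\alpha)\oplus\bar B(0,\rho)\setminus\mathcal S_x(\alpha)$ can be bounded directly by radially projecting onto $\partial\mathcal S_x(\alpha)$).
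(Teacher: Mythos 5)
Your proof is correct, and it takes a genuinely different route from the paper's. The paper works locally: it fixes $x_0$, places the origin in ${\rm int}\,\mathcal{S}_{x_0}$, writes $\int_{\mathcal{S}_x(\delta)}g\,dQ$ in polar coordinates via the co-area formula, and invokes Clarke's implicit function theorem to show that the radial parametrization $t(x,v,\delta)$ of $\partial\mathcal{S}_x(\delta)$ is Lipschitz in $\delta$ (uniformly in $v$ by compactness), finishing with a finite cover of $\mathcal{X}$. You instead argue globally: the first-order condition \eqref{population_optimality} forces $h_x=\varphi_\eps(x)+\psi_\eps(\cdot)-\langle x,\cdot\rangle$ to dip to at least $-\eps$ at some $y_x\in\mathcal{Y}$, convex interpolation toward $y_x$ (using convexity of $\psi_\eps$ and of $\mathcal{Y}$ from \cref{Assumption:reg}) shows the sublevel sets vary Lipschitz-continuously in Hausdorff distance with explicit constant $O(\|\mathcal{Y}\|_\infty/\eps)$, and a Steiner-type tube estimate converts this into a volume (hence $Q$-mass) bound. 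The two arguments exploit the same mechanism — the transversality of the level crossing, which in the paper appears as ``zero is not in the subdifferential'' and in your version as the uniform depth $\eps$ — but yours avoids Clarke's theorem, the boundary representation of $\mathcal{Y}$ via a convex Lipschitz function, and the covering argument, and it yields an explicit constant $L\asymp \|q\|_\infty\,\|\mathcal{Y}\|_\infty(\|\mathcal{Y}\|_\infty+1)^{d-1}/\eps$ that makes the $\eps$-dependence visible (consistent with the remark after \cref{co:potentialGradientsAreLipschitz} that $L$ cannot be uniform in $\eps$). All the individual steps you flag as routine (monotone containment $\mathcal{S}_x(\alpha)\subseteq\mathcal{S}_x(\beta)$, attainment of the depth by compactness, admissibility $t^\ast\in[0,1]$ for $\alpha,\beta\in(-\eps/2,\eps/2)$, monotonicity of surface area for nested convex bodies) do check out.
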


Before proving the proposition, we use it to show $\mathcal{C}^{1,1}$-regularity of the population potentials. While this will not be the only application of \cref{prop:bound-Lipschitz}, it is a good illustration of how this abstract result can be used.

\begin{corollary}\label{co:potentialGradientsAreLipschitz}
    The population potential $\varphi_\eps$ belongs to $\mathcal{C}^{1,1}(\mathcal{X})$; that is, its gradient
    \begin{align}\label{eq:potentialGradientsLipschitzExpr}
    x\mapsto \nabla \varphi_\eps(x)= \frac{\int_{\mathcal{S}_x} y dQ(y)}{Q(\mathcal{S}_x)}
    \end{align}
    is Lipschitz continuous. Similarly, the mass $x\mapsto Q(\mathcal{S}_x)$ is Lipschitz continuous. The analogue holds for $\psi_\eps$ and $y\mapsto P(\mathcal{T}_y)$.
\end{corollary}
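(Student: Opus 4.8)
The plan is to reduce everything to a single sandwich inclusion between the sections $\mathcal{S}_x$ and the thickened/shrunk sections $\mathcal{S}_x(\beta)$, to which \cref{prop:bound-Lipschitz} then applies. Write $h_x(y):=\langle x,y\rangle-\varphi_\eps(x)-\psi_\eps(y)$, so that $\mathcal{S}_x(\beta)=\{y\in\mathcal{Y}:h_x(y)\geq-\beta\}$ by \eqref{eq:SxBetaDefn}, and in particular $\mathcal{S}_x=\mathcal{S}_x(0)$ by \eqref{eq:sections}. For $x,x'\in\mathcal{X}$ and $y\in\mathcal{Y}$ one has $h_{x'}(y)-h_x(y)=\langle x'-x,y\rangle-(\varphi_\eps(x')-\varphi_\eps(x))$, which, using $\|y\|\leq\|\mathcal{Y}\|_\infty$ and the fact that $\varphi_\eps$ is Lipschitz with constant $\|\mathcal{Y}\|_\infty$ (\cref{le:population.potentials}), is bounded in absolute value by $r:=2\|\mathcal{Y}\|_\infty\|x-x'\|$, uniformly over $y\in\mathcal{Y}$. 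Since the sublevel sets are nested, this gives $\mathcal{S}_x(-r)\subseteq\mathcal{S}_{x'}\subseteq\mathcal{S}_x(r)$ and, because $\mathcal{S}_x$ also lies between $\mathcal{S}_x(-r)$ and $\mathcal{S}_x(r)$, the symmetric difference satisfies $\mathcal{S}_{x'}\triangle\mathcal{S}_x\subseteq\mathcal{S}_x(r)\setminus\mathcal{S}_x(-r)$.

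Next I would pass to integrals. Let $L,\beta_0>0$ be as in \cref{prop:bound-Lipschitz}. For any measurable $g$ with $\|g\|_\infty\le1$ and any $x,x'\in\mathcal{X}$ with $r=2\|\mathcal{Y}\|_\infty\|x-x'\|<\beta_0$, the previous step gives
$$\Big|\int_{\mathcal{S}_{x'}}g\,dQ-\int_{\mathcal{S}_x}g\,dQ\Big|\ \le\ Q\big(\mathcal{S}_x(r)\setminus\mathcal{S}_x(-r)\big)\ =\ \int_{\mathcal{S}_x(r)}1\,dQ-\int_{\mathcal{S}_x(-r)}1\,dQ\ \le\ 2Lr,$$
the last inequality being \cref{prop:bound-Lipschitz} applied with $g\equiv1$, $\alpha=r$, $\beta=-r$. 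By convexity of $\mathcal{X}$, subdividing the segment $[x,x']$ into pieces shorter than $\beta_0/(2\|\mathcal{Y}\|_\infty)$ and summing removes the restriction on $\|x-x'\|$, so that for a constant $C$ depending only on $\|\mathcal{Y}\|_\infty$ and $L$ we get $\big|\int_{\mathcal{S}_{x'}}g\,dQ-\int_{\mathcal{S}_x}g\,dQ\big|\le C\|x-x'\|$ for all $x,x'\in\mathcal{X}$ and all $\|g\|_\infty\le1$. Taking $g\equiv1$ shows that $x\mapsto Q(\mathcal{S}_x)$ is Lipschitz; taking $g(y)=y_j/\|\mathcal{Y}\|_\infty$ coordinatewise shows that $x\mapsto\int_{\mathcal{S}_x}y\,dQ(y)$ is Lipschitz. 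Both maps are also bounded (by $1$ and $\|\mathcal{Y}\|_\infty$ respectively).

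To conclude the $\mathcal{C}^{1,1}$ claim, recall from \cref{le:population.potentials} that $\varphi_\eps\in\mathcal{C}^1(\mathcal{X})$ with $\nabla\varphi_\eps(x)=\big(\int_{\mathcal{S}_x}y\,dQ(y)\big)/Q(\mathcal{S}_x)$, and that $Q(\mathcal{S}_x)\ge\delta>0$ for all $x$ by \eqref{eq:sectionsBounds}. A standard computation (expand the difference of quotients over the common denominator $Q(\mathcal{S}_x)Q(\mathcal{S}_{x'})\ge\delta^2$) shows that a bounded Lipschitz $\mathbb{R}^d$-valued function divided by a bounded Lipschitz scalar function bounded below by $\delta$ is again Lipschitz; hence $\nabla\varphi_\eps$ is Lipschitz, i.e.\ $\varphi_\eps\in\mathcal{C}^{1,1}(\mathcal{X})$, and $x\mapsto Q(\mathcal{S}_x)$ is Lipschitz. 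The statements for $\psi_\eps$ and $y\mapsto P(\mathcal{T}_y)$ follow verbatim after exchanging the roles of $(P,\mathcal{X})$ and $(Q,\mathcal{Y})$.

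The only genuine subtlety is that \cref{prop:bound-Lipschitz} is local in the level parameter ($\alpha,\beta\in(-\beta_0,\beta_0)$), so the argument above initially only produces a Lipschitz estimate for $x,x'$ within distance $\beta_0/(2\|\mathcal{Y}\|_\infty)$; promoting this to a global Lipschitz bound requires the chaining-along-a-segment step, which is where convexity of $\mathcal{X}$ enters. Everything else is elementary once \cref{prop:bound-Lipschitz} is in hand — indeed this corollary is meant mainly as a first illustration of how that proposition is used.
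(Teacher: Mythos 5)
Your proof is correct and follows essentially the same route as the paper: the sandwich inclusion $\mathcal{S}_x(-r)\subseteq\mathcal{S}_{x'}\subseteq\mathcal{S}_x(r)$ for $r$ proportional to $\|x-x'\|$, followed by an application of \cref{prop:bound-Lipschitz} and the lower bound~\eqref{eq:sectionsBounds}, is exactly the paper's argument. You are in fact a little more careful in two spots the paper glosses over — the constant in the inclusion is $2\|\mathcal{Y}\|_\infty$ (bilinear term plus Lipschitz term), and the local-to-global step uses a chaining argument along segments in the convex set $\mathcal{X}$ — but these are refinements of the same approach, not a different one.
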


\begin{proof}
Recall that $\varphi_\eps$ is $\ell$-Lipschitz (with $\ell= \|\mathcal{Y}\|_\infty$) and note that this implies 
$$ \mathcal{S}_{x}(-\ell\|x-z\| )\subset \mathcal{S}_{z}  \subset  \mathcal{S}_{x}(\ell\|x-z\| ) \quad\mbox{for all }x,z\in \mathcal{X}.
$$  
Let $L,\beta_0>0$ be as in \cref{prop:bound-Lipschitz}. Then \cref{prop:bound-Lipschitz} with $\alpha=0$, $\beta=\mp\ell\|x-z\|$ and $g\equiv1$ implies for all $x,z$ with $\ell\|x-z\|\leq \beta_0$ that 
$$
  Q(\mathcal{S}_{x}) - L\ell\|x-z\| 
  \leq Q(\mathcal{S}_{x}(- \ell\|x-z\|))
  \leq Q(\mathcal{S}_{z})
  \leq Q(\mathcal{S}_{x}(\ell\|x-z\|))
  \leq Q(\mathcal{S}_{x}) + L\ell\|x-z\|.
$$
That is, $x\mapsto Q(\mathcal{S}_{x})$ is Lipschitz with constant $L\ell$. Similarly, \cref{prop:bound-Lipschitz} yields that 
$$ x\mapsto \int_{\mathcal{S}_x} y_i dQ(y)
$$
is Lipschitz with constant $L\ell\|\mathcal{Y}\|_\infty$, where $y_i$ denotes the $i$-th component of $y\in\mathcal{Y}$. Recalling the uniform lower bound~\eqref{eq:sectionsBounds} for the denominator in~\eqref{eq:potentialGradientsLipschitzExpr}, it follows that $\nabla \varphi_\eps$ is Lipschitz, which was the claim.
\end{proof}

\begin{remark}
   \Cref{co:potentialGradientsAreLipschitz} links to the regularity theory for unregularized optimal transport, where the gradient of the potential is the optimal transport map. The standard Caffarelli--Urbas conditions \cite{Caffarelli.1996.AnnalsOfMath,Urbas.1997} for Lipschitz continuity of the transport map require in particular that the marginal densities be (bounded and) bounded away from zero and the supports be convex with Lipschitz boundaries. The condition that densities be bounded away from zero is essential, as illustrated by Wang's counterexamples \cite{Wang-PrAmMaSo-95}. 

   In \cref{co:potentialGradientsAreLipschitz}, the regularization parameter $\eps$ is positive and fixed. In light of the aforementioned results, one cannot expect the constant~$L$ to be uniform in~$\eps$, as the estimate would then extend to the unregularized limit. Obtaining such a uniform estimate (under stronger conditions) is an interesting open problem. It is equivalent to a uniform bound for the Hessian of $\varphi_\eps$ and closely related to establishing the sharp rate $\eps^{\frac{1}{d+2}}$ for the diameter of~$\mathcal{S}_x$ as $\eps\to 0$ (which was shown in~\cite{GonzalezSanzNutz.24b} for the case $d=1$).
\end{remark}

\begin{proof}[Proof of \cref{prop:bound-Lipschitz}] 
Fix $x_0\in \mathcal{X}$. Recall from \cref{le:population.potentials} that $\mathcal{S}_{x_0}$ is convex, that ${\rm int}\, \mathcal{S}_{x_0}\neq\emptyset$, and the representation~\eqref{eq:sectionsBoundaries}. Changing the coordinate system, we can assume without loss of generality that 
$$
  0\in  {\rm int}\, \mathcal{S}_{x_0} =\{y\in\mathcal{Y}:\, \varphi_\eps(x_0)+\psi_\eps(y)-\langle x_0,y \rangle < 0\}.
$$
As $\varphi_\eps$ and $\psi_\eps$ are Lipschitz, this representation yields that there exists $\delta_0>0$ such that $0\in {\rm int}\, \mathcal{S}_{x}(\delta )$ for all $(x,\delta)\in\mathcal{X}\times \R$ with $|\delta|+\|x-x_0\|\leq \delta_0$.

Consider such $(x,\delta)$ and let $g:\R^d\to \R$ be bounded and measurable with $\|g\|_\infty\leq 1$. By the co-area formula, 
$$  \int_{\mathcal{S}_{x}(\delta )} g(y) dQ(y) = \int_{\mathbb{S}^{d-1}}  
\int_{0}^{t(x,v,\delta)}  g(r v) q(rv)  dr d\mathcal{H}^{d-1}(v),  $$
where $t(x,v,\delta)$ is the unique number  $ t>0 $ such that 
$ t v \in  
\partial \mathcal{S}_{x}(\delta )$, $q$ denotes the Lebesgue density of~$Q$ (which is bounded by \cref{Assumption:reg}) and $\mathcal{H}^{d-1}$ denotes $(d-1)$-dimensional Hausdorff measure on~ $\mathbb{S}^{d-1}$. Next, we show that  $t(x,v,\delta)$ is locally Lipschitz. By \cite[Theorem~5.4]{DelfourZolesio}, there exists a convex Lipschitz function $h_Q$ such that 
$\mathcal{Y}= \{ y: h_Q(y)\leq 0\}$ and $\partial \mathcal{Y}= \{ y: h_Q(y)= 0\}$. We then introduce the function
$$\gamma(t, x,v, \delta):=\max\left\{ \varphi_\eps(x)+\psi_\eps( t v ) - t \langle v, x\rangle +\delta, h_Q(t v) \right\}.$$
Note that 
$t\mapsto \gamma(t, x,v, \delta)$ is convex, that $0\in {\rm int}\, \mathcal{S}_{x}(\delta )$ translates to $\gamma(0,x,v,\delta )<0$, and 
$$  \gamma(t(x,v,\delta),x,v,\delta )=0. $$
We can see this equation as an implicit definition of $t(x,v,\delta)$. Indeed, fix $v_0\in \mathbb{S}^{d-1}$ and note that $\gamma(t(x_0,v_0,0),x_0,v_0,0)=0$ and $\gamma (0,  x_0,v_0,0)<0$ imply that $t(x_0,v_0,0)$ cannot be a minimum of $t\mapsto \gamma (t,x_0,v_0,0)$. As a consequence, zero is not in the subdifferential,
\begin{multline*}
    0\not\in \partial_1 \gamma (t(x_0,v_0,0),  x_0,v_0,0)\\=\{ t\in \R: \,  \gamma (t(x_0,v_0,0),  x_0,v_0,0) \leq  \gamma (s,  x_0,v_0,0) + t\cdot (t(x_0,v_0,0)-s),  \ \forall \, s\in \R\}.
\end{multline*}
 As $\gamma(t, x,v, \delta)$ is jointly Lipschitz in a neighborhood of $(t(x_0,v_0,0),x_0,v_0,0)$, 
Clarke's implicit function theorem (more precisely, the corollary on \cite[p.~256]{Clarke}) then yields a neighborhood of $(x_0,v_0,0)$ where the function  $t(x,v,\delta)$ is Lipschitz with some constant $\ell$. By a compactness argument, the constant $\ell$ can be taken uniformly in the variable $v\in \mathbb{S}^{d-1}$. As a consequence, there exists $\delta_0$ such that  $|t(x,v,\delta)-t(x,v,\beta)|\leq \ell  |\delta-\beta|$ for all $x,\delta,\beta$ with $\|x-x_0\|<\delta_0$ and $\delta,\beta\in (-\delta_0, \delta_0)$ and all $v\in \mathbb{S}^{d-1}$.
Since $g$ and $q$ are bounded, it follows for all such $x,\delta,\beta$ that  
\begin{align}\label{eq:bound-Lipschitz-proof}
\left|\int_{\mathcal{S}_{x}(\delta )} g(y) dQ(y)- \int_{\mathcal{S}_{x}(\beta )} g(y) dQ(y)\right| \leq \ell \| g\|_\infty \| q\|_\infty |\beta-\delta|.
\end{align}
Here $\ell$ and $\delta_0$ depend on~$x_0$, and~\eqref{eq:bound-Lipschitz-proof} holds only for~$x$ in a ball around $x_0$. However, the compact set $\mathcal{X}$ is covered by finitely many such balls, hence~\eqref{eq:bound-Lipschitz-proof} holds uniformly in $x\in\mathcal{X}$ after possibly changing~$\ell$ and~$\delta_0$.
\end{proof}

\subsection{VC bounds for statistical complexity of the optimal support}

The next proposition encapsulates a tool that will be used in several of the most important steps towards the central limit theorems. It uses VC theory to bound the statistical complexity of the sublevel sets $\mathcal{S}_x(\cdot)$ from~\eqref{eq:SxBetaDefn} to guarantee that a given class of functions satisfies a Donsker-type property.
        
\begin{proposition}\label{pr:VCtool}
    Let $g:\mathcal{Y}\to\R$ be bounded measurable and let $\Lambda$ be any index set. For each $\lambda\in\Lambda$ and $x\in\mathcal{X}$, consider two measurable random functions  $V_{n,\lambda,x}, W_{n,\lambda,x}: \mathcal{Y}\to \R$ such that 
    $$\1_{\mathcal{S}_x(\alpha_n)} \leq V_{n,\lambda,x},W_{n,\lambda,x} \leq \1_{\mathcal{S}_x(\beta_n)} \quad\mbox{$Q$-a.s.\ for all~$n\in\NN$}$$
    for some real-valued random variables $\alpha_n,\beta_n$ with $\alpha_n\to0$ and $\beta_n\to0$ in probability~$\mathbb{P}$.
    Then 
    \begin{align*}
        \sup_{\lambda\in\Lambda,\,x\in\mathcal{X}}\left|\int V_{n,\lambda,x}(y)g(y) dQ(y)-\int W_{n,\lambda,x}(y)g(y) dQ_n(y)\right| = \mathcal{O}_{\mathbb{P}}\left(n^{-\frac{1}{2}} + |\alpha_n-\beta_n|\right).
    \end{align*}
\end{proposition}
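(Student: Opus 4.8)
The plan is to peel off, on a high-probability event, the contributions of the ``boundary layer'' $\mathcal{S}_x(\alpha_n)\triangle\mathcal{S}_x(\beta_n)$ via \cref{prop:bound-Lipschitz}, and then to control the remaining empirical fluctuations by showing that the sublevel sets $\mathcal{S}_x(\alpha)$ form a Vapnik--Chervonenkis class. Let $L,\beta_0>0$ be the constants of \cref{prop:bound-Lipschitz}. Since $\alpha_n,\beta_n\to0$ in probability and the target is an $\mathcal{O}_{\PP}$ estimate, it suffices to argue on the events $E_n:=\{|\alpha_n|\vee|\beta_n|<\beta_0\}$, which satisfy $\PP(E_n)\to1$; off $E_n$ the left-hand side is bounded by $2\|g\|_\infty$, which the $\mathcal{O}_{\PP}$ bookkeeping absorbs. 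Writing $D_{n,x}:=\mathcal{S}_x(\alpha_n)\triangle\mathcal{S}_x(\beta_n)$ and using that $\beta\mapsto\mathcal{S}_x(\beta)$ is monotone (so one of the two sets contains the other), I would record that $\nu(D_{n,x})=|\nu(\mathcal{S}_x(\alpha_n))-\nu(\mathcal{S}_x(\beta_n))|$ for any positive measure $\nu$, that on $E_n$ \cref{prop:bound-Lipschitz} (with $g\equiv1$) gives $\sup_{x\in\mathcal{X}}Q(D_{n,x})\le L|\alpha_n-\beta_n|$, and that the sandwich hypothesis implies the pointwise estimates $|V_{n,\lambda,x}-\1_{\mathcal{S}_x(\alpha_n)}|\le\1_{D_{n,x}}$ and $|W_{n,\lambda,x}-\1_{\mathcal{S}_x(\alpha_n)}|\le\1_{D_{n,x}}$, hence also $|V_{n,\lambda,x}-W_{n,\lambda,x}|\le\1_{D_{n,x}}$.

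Then I would decompose, for each $\lambda\in\Lambda$ and $x\in\mathcal{X}$,
\begin{multline*}
\int V_{n,\lambda,x}\,g\,dQ-\int W_{n,\lambda,x}\,g\,dQ_n
=\int(V_{n,\lambda,x}-W_{n,\lambda,x})\,g\,dQ\\
+\int\1_{\mathcal{S}_x(\alpha_n)}\,g\,d(Q-Q_n)
+\int\big(W_{n,\lambda,x}-\1_{\mathcal{S}_x(\alpha_n)}\big)\,g\,d(Q-Q_n).
\end{multline*}
On $E_n$ the first summand is at most $\|g\|_\infty Q(D_{n,x})\le L\|g\|_\infty|\alpha_n-\beta_n|$; the third is at most $\|g\|_\infty\big(Q(D_{n,x})+Q_n(D_{n,x})\big)\le 2L\|g\|_\infty|\alpha_n-\beta_n|+2\|g\|_\infty\sup_{x,\,|\alpha|\le\beta_0}|(Q_n-Q)(\mathcal{S}_x(\alpha))|$; and the second is at most $\sup_{x,\,|\alpha|\le\beta_0}|(Q_n-Q)(\1_{\mathcal{S}_x(\alpha)}g)|$, since $\mathcal{S}_x(\alpha_n)\in\mathcal{A}$ there. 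Taking the supremum over $\lambda,x$, the whole problem reduces to showing that the two \emph{deterministic} suprema
$$
\sup_{x\in\mathcal{X},\,|\alpha|\le\beta_0}\big|(Q_n-Q)(\mathcal{S}_x(\alpha))\big|
\quad\text{and}\quad
\sup_{x\in\mathcal{X},\,|\alpha|\le\beta_0}\big|(Q_n-Q)(\1_{\mathcal{S}_x(\alpha)}g)\big|
$$
are $\mathcal{O}_{\PP}(n^{-1/2})$.

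This is where VC theory enters. By~\eqref{eq:SxBetaDefn}, $\mathcal{S}_x(\alpha)=\{y\in\mathcal{Y}:\ \langle x,y\rangle+(\varphi_\eps(x)-\alpha)-\psi_\eps(y)\ge0\}$, so as $(x,\alpha)$ ranges over $\mathcal{X}\times[-\beta_0,\beta_0]$ the defining functions lie in the $(d+2)$-dimensional vector space spanned by $1,y_1,\dots,y_d$ and the fixed function $\psi_\eps$. By the standard bound on the VC index of sets of the form $\{f\ge0\}$, $f$ in a finite-dimensional vector space of functions \cite[Lemma~2.6.15]{vanderVaart.1996}, the class $\mathcal{A}:=\{\mathcal{S}_x(\alpha):x\in\mathcal{X},\,|\alpha|\le\beta_0\}$ is VC, with index at most $d+3$. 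Consequently $\{\1_A:A\in\mathcal{A}\}$ is a uniformly bounded VC class and the classical Vapnik--Chervonenkis inequality gives $\EE\sup_{A\in\mathcal{A}}|(Q_n-Q)(A)|=\mathcal{O}(n^{-1/2})$; moreover, since $\|\1_A g-\1_{A'}g\|_{L^2(\mu)}\le\|g\|_\infty\,\mu(A\triangle A')^{1/2}$ for any probability measure $\mu$, the weighted class $\{\1_A g:A\in\mathcal{A}\}$ inherits a polynomial uniform covering number with constant envelope $\|g\|_\infty$, hence is $Q$-Donsker and satisfies $\EE\sup_{A\in\mathcal{A}}|(Q_n-Q)(\1_A g)|=\mathcal{O}(n^{-1/2})$. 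Joint measurability of $(y,x,\alpha)\mapsto\1_{\mathcal{S}_x(\alpha)}(y)$, which follows from continuity of $\varphi_\eps,\psi_\eps$, makes these classes pointwise measurable, so no outer-measure subtleties arise. Combining this with the previous display and using $\PP(E_n)\to1$ yields the asserted rate $\mathcal{O}_{\PP}(n^{-1/2}+|\alpha_n-\beta_n|)$.

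I expect the only genuinely delicate point to be the bookkeeping of the sample-dependent levels $\alpha_n,\beta_n$: the empirical-process control must be invoked as a uniform bound over the \emph{frozen} index set $\{|\alpha|\le\beta_0\}$ and only afterwards specialized to $\alpha_n,\beta_n$ (which lie in $(-\beta_0,\beta_0)$ precisely on $E_n$), while the $|\alpha_n-\beta_n|$-terms are produced solely by \cref{prop:bound-Lipschitz}; the two ingredients must be kept cleanly separated and the $\mathcal{O}_{\PP}$-statement must be checked with a random rate $n^{-1/2}+|\alpha_n-\beta_n|$. The VC bound plays the role that smoothness of the conjugate $\psi$ plays in \cite{gonzalezsanz.2025.sparseregularizedoptimaltransport}, and once the above reduction is set up, it closes the argument.
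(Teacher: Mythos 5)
Your proof is correct and follows essentially the same strategy as the paper: decompose the difference, peel off the boundary-layer contribution via the Lipschitz estimate of \cref{prop:bound-Lipschitz}, and control the residual empirical-process fluctuations by showing the sublevel-set class $\{\mathcal{S}_x(\alpha)\}$ is VC (via the fact that these are sublevel sets of functions in a $(d+2)$-dimensional linear span involving $\psi_\eps$). The only differences are cosmetic---the paper first reduces to $g\ge0$ and argues one-sidedly, exchanging the roles of $\alpha_n,\beta_n$, and invokes \cite[Lemma~2.6.18, Theorem~2.6.8, Corollary~2.3.12]{vanderVaart.1996} directly for the weighted class, whereas you re-center on $\1_{\mathcal{S}_x(\alpha_n)}$, use a three-term symmetric-difference decomposition for general $g$, and argue the Donsker property of $\{\1_A g\}$ through a covering-number comparison---but the two arguments are the same in substance.
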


\begin{proof}
Decomposing $g=(g)_+-(g)_+$ and using the triangle inequality, we may assume w.l.o.g.\ that $g\geq0$. Set
\begin{align*}
D_{n,\lambda,x}:= \int V_{n,\lambda,x}(y)g(y) dQ(y)-\int W_{n,\lambda,x}(y)g(y) dQ_n(y).
\end{align*}
As $g\geq0$, the assumed inequalities yield
 $$
  D_{n,\lambda,x} \leq \int_{\mathcal{S}_x(\beta_n)} g(y)dQ(y) - \int_{\mathcal{S}_x(\alpha_n)} g(y)dQ_n(y)
 $$
 and hence
 \begin{equation}\label{eq:DupperboundGeneral}
     D_{n,\lambda,x}
     \leq  \left|\int_{\mathcal{S}_x(\beta_n)} g(y)dQ(y)- \int_{\mathcal{S}_x(\alpha_n)} g(y)dQ(y)  \right| 
     +\left|\int_{\mathcal{S}_x(\alpha_n)} g(y)d(Q-Q_n)(y) \right|.
 \end{equation}
Let $L,\beta_0>0$ be the constants from \cref{prop:bound-Lipschitz}. Then by \cref{prop:bound-Lipschitz}, the first term on the right-hand side of~\eqref{eq:DupperboundGeneral} is bounded by 
$ \|g\|_\infty L |\alpha_n-\beta_n|$ on the event $\{|\alpha_n|,|\beta_n|\leq \beta_0\}$ which satisfies $\mathbb{P}(|\alpha_n|,|\beta_n|\leq \beta_0)\to1$ as $n\to\infty$.

We can thus focus on the last term of~\eqref{eq:DupperboundGeneral}; our goal is to show that it is of order $\mathcal{O}_{\mathbb{P}} (n^{-\frac{1}{2}})$. Note that
\begin{equation*}
     \left|\int_{\mathcal{S}_x(\alpha_n)} g(y)d(Q-Q_n)(y) \right| \leq \sup_{\delta\in\R}\left|\int_{\mathcal{S}_x(\delta)} g(y)d(Q-Q_n)(y) \right|.
\end{equation*}
We use  VC theory (see \cite[Section~2.6]{vanderVaart.1996}) to bound the right-hand side. 
Define the VC dimension (or index) of a collection $\mathcal{C}$ of subsets of a set~$\mathcal{X}$ as in \cite[Section~2.6.1]{vanderVaart.1996}, namely
$$ \mathcal{V}(\mathcal{C}):= \inf \left\{ n\in \NN : \max_{y_1, \dots, y_n\in \mathcal{X}}\Delta_n(\mathcal{C}, y_1, \dots, y_n) <2^n \right\}$$
where 
$$ \Delta_n(\mathcal{C}, y_1, \dots, y_n) := \#\left\{ C \cap \{ y_1, \dots, y_n\}:   C\in \mathcal{C}  \right\}. $$
Recall also that the VC dimension of a class $\mathcal{F}$ of functions $f:\mathcal{X}\to\R$  is defined as the VC dimension of the collection of all its subgraphs $\{{(y,t)\in\mathcal{X}\times \R : t< f(y) }\}.$

Consider the collection 
$$ \mathcal{C}:=\{ \mathcal{S}_x(\delta): \ \delta\in\R  , \ x\in \mathcal{X} \}$$
of subsets of $\mathcal{Y}$. To verify that $\mathcal{V}(\mathcal{C})<\infty$, we consider the following linear space of functions on~$\mathcal{Y}$,
$$ \mathcal{G}:=\left\{y\mapsto  \beta_1 \psi(y)-\langle x,y \rangle + \beta_2: (x, \beta_1, \beta_2)\in \R^{d+2}\right\}$$
and note that $\mathcal{G}$ has algebraic dimension $d+2$. By \cite[Lemma~2.6.15]{vanderVaart.1996}, it follows that 
$ \mathcal{V}(\mathcal{G}) \leq  d+4<\infty$. Given a function class of finite VC dimension, the class of its zero-sublevel sets has finite VC dimension by \cite[Lemmas~2.6.17 and~2.6.18]{vanderVaart.1996}. Noting 
$$
  \mathcal{C} \subset \{g\in \mathcal{G}: g\leq 0\},
$$
it follows that $ \mathcal{V}(\mathcal{C})<\infty$. Using \cite[Lemma~2.6.18]{vanderVaart.1996}, this implies that the function class
$$ \mathcal{F} := \{g \1_{C} : \ C\in \mathcal{C}\}$$ has finite VC dimension. Hence, \cite[Theorem~2.6.8]{vanderVaart.1996} yields that $\mathcal{F}$ is $Q$-Donsker. (The condition of \cite[Theorem~2.6.8]{vanderVaart.1996} holds due to the uniform boundedness of the functions in~$\mathcal{F}$, as observed in the proof of that theorem in~\cite{vanderVaart.1996}.) We conclude by \cite[Corollary~2.3.12]{vanderVaart.1996} that 
$$ \sup_{x\in \mathcal{X}, \, \delta\in \R} \left|\int_{\mathcal{S}_x(\delta)} g(y)d(Q-Q_n)(y) \right| = \mathcal{O}_{\mathbb{P}}\left(n^{-\frac{1}{2}}\right).
$$
So far, we have shown that $\sup_{\lambda,x}D_{n,\lambda,x} \leq \mathcal{O}_{\mathbb{P}}(n^{-\frac{1}{2}} + |\alpha_n-\beta_n|)$. Analogously, we obtain a lower bound of the same order by exchanging the roles of $\alpha_n$ and $\beta_n$ in the above argument, completing the proof.
\end{proof}

Next, we record a version of \cref{pr:VCtool} on the product space $\mathcal{X}\times\mathcal{Y}$ instead of~$\mathcal{Y}$; it will be used in the  proof of the central limit theorem for the optimal couplings (\cref{th:CLT.plans}). The proof is analogous to the one of  \cref{pr:VCtool} and hence omitted.

\begin{proposition}\label{pr:VCtoolProduct}
   Define 
   \begin{align}\label{eq:MBetaDefn}
        \mathcal{M}(\beta):= \{(x,y)\in \mathcal{X}\times\mathcal{Y}: \varphi_\eps(x)+ \psi_\eps(y)\leq \langle x, y \rangle +\beta\} \quad \mbox{for }\beta\in\R.
   \end{align}
    Let $g:\mathcal{X}\times\mathcal{Y}\to\R$ be bounded measurable and let $\Lambda$ be any index set. For each $\lambda\in\Lambda$, consider two measurable random functions  $V_{n,\lambda}, W_{n,\lambda}: \mathcal{X}\times\mathcal{Y}\to \R$ such that 
    $$\1_{\mathcal{M}(\alpha_n)} \leq V_{n,\lambda},W_{n,\lambda} \leq \1_{\mathcal{M}_x(\beta_n)} \quad\mbox{$P\otimes Q$-a.s.\ for all~$n\in\NN$}$$
    for some real-valued random variables $\alpha_n,\beta_n$ with $\alpha_n\to0$ and $\beta_n\to0$ in probability~$\mathbb{P}$.
    Then 
    \begin{multline*}
        \sup_{\lambda\in\Lambda}\left|\int V_{n,\lambda}(x,y)g(x,y) d(P\otimes Q)(x,y)-\int W_{n,\lambda}(x,y)g(x,y) d(P_n\otimes Q_n)(x,y)\right| \\= \mathcal{O}_{\mathbb{P}}\left(n^{-\frac{1}{2}} + |\alpha_n-\beta_n|\right).
    \end{multline*}
\end{proposition}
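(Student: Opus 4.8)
The plan is to mimic the proof of \cref{pr:VCtool} line by line, with $\mathcal{Y},Q,Q_n,\mathcal{S}_x(\cdot)$ replaced by $\mathcal{X}\times\mathcal{Y},\,P\otimes Q,\,P_n\otimes Q_n,\,\mathcal{M}(\cdot)$, while accounting for the one genuinely new feature: $P_n\otimes Q_n$ is the product of two independent empirical measures rather than the empirical measure of an i.i.d.\ sample from $P\otimes Q$, so the i.i.d.\ Donsker theorem does not apply to it directly. As before, splitting $g$ into positive and negative parts we may assume $g\ge 0$. Writing $D_{n,\lambda}:=\int V_{n,\lambda}g\,d(P\otimes Q)-\int W_{n,\lambda}g\,d(P_n\otimes Q_n)$ and invoking the sandwich hypothesis exactly as in~\eqref{eq:DupperboundGeneral},
\begin{equation*}
  D_{n,\lambda}\ \le\ \Bigl|\int_{\mathcal{M}(\beta_n)} g\,d(P\otimes Q)-\int_{\mathcal{M}(\alpha_n)} g\,d(P\otimes Q)\Bigr|\ +\ \Bigl|\int_{\mathcal{M}(\alpha_n)} g\,d\bigl(P\otimes Q-P_n\otimes Q_n\bigr)\Bigr| ,
\end{equation*}
and, as in \cref{pr:VCtool}, it suffices to bound the right-hand side, since the matching bound for $-D_{n,\lambda}$ follows by exchanging $\alpha_n$ and $\beta_n$.

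For the first term I would first record a product version of \cref{prop:bound-Lipschitz}: the $x$-section of $\mathcal{M}(\beta)$ is exactly $\mathcal{S}_x(\beta)$, so $\int_{\mathcal{M}(\beta)} g\,d(P\otimes Q)=\int_{\mathcal{X}}\bigl(\int_{\mathcal{S}_x(\beta)} g(x,y)\,dQ(y)\bigr)\,dP(x)$, and applying \cref{prop:bound-Lipschitz} to the bounded function $g(x,\cdot)$ shows the inner integral is Lipschitz in $\beta$ on $(-\beta_0,\beta_0)$ with constant $L\|g\|_\infty$, uniformly in $x$; integrating in $x$ then gives that $\beta\mapsto\int_{\mathcal{M}(\beta)} g\,d(P\otimes Q)$ is $L\|g\|_\infty$-Lipschitz on $(-\beta_0,\beta_0)$. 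Hence the first term above is at most $L\|g\|_\infty\,|\alpha_n-\beta_n|$ on the event $\{|\alpha_n|,|\beta_n|\le\beta_0\}$, whose probability tends to $1$.

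The second term is bounded by $\sup_{\delta\in\R}\bigl|\int_{\mathcal{M}(\delta)} g\,d(P\otimes Q-P_n\otimes Q_n)\bigr|$, and since $\alpha_n\to0$ in probability it suffices to control the supremum over $|\delta|\le\beta_0$. Here I would use $P_n\otimes Q_n-P\otimes Q=(P_n-P)\otimes Q+P\otimes(Q_n-Q)+(P_n-P)\otimes(Q_n-Q)$. The two linear parts are genuine i.i.d.\ empirical processes: $((P_n-P)\otimes Q)(\1_{\mathcal{M}(\delta)}g)=(P_n-P)(\widetilde a_\delta)$ with $\widetilde a_\delta(x):=\int_{\mathcal{S}_x(\delta)} g(x,y)\,dQ(y)$, and \cref{prop:bound-Lipschitz} shows the class $\{\widetilde a_\delta:|\delta|\le\beta_0\}$ is uniformly bounded and Lipschitz-indexed by $\delta$, hence of finite $L^2(P)$-bracketing entropy and $P$-Donsker, so $\sup_{|\delta|\le\beta_0}|(P_n-P)(\widetilde a_\delta)|=\mathcal{O}_{\mathbb{P}}(n^{-1/2})$. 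Symmetrically, $(P\otimes(Q_n-Q))(\1_{\mathcal{M}(\delta)}g)=(Q_n-Q)(\widetilde b_\delta)$ with $\widetilde b_\delta(y):=\int_{\mathcal{T}_y(\delta)} g(x,y)\,dP(x)$ and $\mathcal{T}_y(\delta):=\{x\in\mathcal{X}:\varphi_\eps(x)+\psi_\eps(y)\le\langle x,y\rangle+\delta\}$, and the analogue of \cref{prop:bound-Lipschitz} for $\delta\mapsto\mathcal{T}_y(\delta)$ --- valid by the symmetry $P\leftrightarrow Q$, $\varphi_\eps\leftrightarrow\psi_\eps$ --- gives $\sup_{|\delta|\le\beta_0}|(Q_n-Q)(\widetilde b_\delta)|=\mathcal{O}_{\mathbb{P}}(n^{-1/2})$.

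The bilinear remainder $T_3(\delta):=((P_n-P)\otimes(Q_n-Q))(\1_{\mathcal{M}(\delta)}g)$ is the only genuinely new point, and I expect it to be the main (though minor) obstacle. For fixed $\delta$, conditioning successively on the two samples and using $0\le g\le\|g\|_\infty$ yields $\EE[T_3(\delta)^2]\lesssim\|g\|_\infty^2/n^2$, so $T_3(\delta)=\mathcal{O}_{\mathbb{P}}(n^{-1})$; the same conditioning combined with the product Lipschitz bound and the nestedness of the $\mathcal{M}(\delta)$ (which give $(P\otimes Q)(\mathcal{M}(\delta)\setminus\mathcal{M}(\delta'))\le L|\delta-\delta'|$) yields the increment estimate $\EE[(T_3(\delta)-T_3(\delta'))^2]\lesssim\|g\|_\infty^2|\delta-\delta'|/n^2$. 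A standard chaining / maximal inequality for second-order empirical processes over the one-dimensional index set $[-\beta_0,\beta_0]$, fed with this modulus, then shows $\sup_{|\delta|\le\beta_0}|T_3(\delta)|=o_{\mathbb{P}}(n^{-1/2})$. Combining the three bounds gives $\sup_\lambda D_{n,\lambda}\le\mathcal{O}_{\mathbb{P}}(n^{-1/2}+|\alpha_n-\beta_n|)$ on the event above, and the symmetric argument gives the matching lower bound, completing the proof. (Alternatively, the whole second term can be dispatched at once by the two-sample version of Donsker's theorem applied to the uniformly bounded VC-subgraph class $\{g\1_{\mathcal{M}(\delta)}:\delta\in\R\}$; note moreover that $\{\mathcal{M}(\delta):\delta\in\R\}$ is merely a nested one-parameter family of sets, so---in contrast to \cref{pr:VCtool}---no VC argument for the sets themselves is required, the sole new difficulty being the product structure of $P_n\otimes Q_n$.)
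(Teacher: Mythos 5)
Your proof is correct and, given that the paper declines to spell out its own proof ("analogous to \cref{pr:VCtool} and hence omitted"), it supplies exactly the missing content. You have also correctly identified the one genuine departure from the single-marginal case, which the paper's cursory remark conceals: $P_n\otimes Q_n$ is the product of two independent empirical measures, not the empirical measure of $n$ i.i.d.\ draws from $P\otimes Q$, so the conclusion "$\mathcal{F}$ is $(P\otimes Q)$-Donsker, hence $\sup|(P_n\otimes Q_n - P\otimes Q)(h)|=\mathcal{O}_\PP(n^{-1/2})$" is not a one-line consequence of \cite[Theorem~2.6.8]{vanderVaart.1996}, unlike in \cref{pr:VCtool}. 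Your Hoeffding-type decomposition $P_n\otimes Q_n - P\otimes Q = (P_n-P)\otimes Q + P\otimes(Q_n-Q) + (P_n-P)\otimes(Q_n-Q)$, the Fubini reduction of the two linear parts to genuine one-sample empirical processes indexed by the Lipschitz-in-$\delta$ families $\widetilde a_\delta$ and $\widetilde b_\delta$, and the product version of \cref{prop:bound-Lipschitz} obtained by disintegration, are all correct and the right way to handle this. Your parenthetical remark that $\{\mathcal{M}(\delta)\}$ is a nested one-parameter chain (so no VC combinatorics for the index sets is needed, in contrast to the $(x,\delta)$-indexed family in \cref{pr:VCtool}) is also accurate.

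The only place you are slightly too quick is the bilinear remainder $T_3$. You feed the second-moment increment bound $\EE[(T_3(\delta)-T_3(\delta'))^2]\lesssim |\delta-\delta'|/n^2$ into "a standard chaining / maximal inequality." Plain Dudley chaining requires sub-Gaussian (or at least controlled higher-moment) increments, and degenerate two-sample $U$-statistics are not automatically sub-Gaussian; so as stated this step is a gesture rather than an argument, and would need either a citation to a $U$-process maximal inequality (Nolan--Pollard, Arcones--Giné) or an explicit elementary substitute. The elementary substitute is available and worth recording: write $(P_n-P)\otimes(Q_n-Q) = P_n\otimes Q_n - P_n\otimes Q - P\otimes Q_n + P\otimes Q$ as a signed combination of four \emph{positive} measures; for $g\ge0$ each of the four maps $\delta\mapsto \mu_j(g\1_{\mathcal{M}(\delta)})$ is nondecreasing because the $\mathcal{M}(\delta)$ are nested. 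Hence for a grid $\delta_0<\dots<\delta_m$ of $[-\beta_0,\beta_0]$ with mesh $h$, $\sup_{|\delta|\le\beta_0}|T_3(\delta)|\le \max_i|T_3(\delta_i)| + \sum_{j=1}^4\max_i\bigl(\mu_j(g\1_{\mathcal{M}(\delta_{i+1})})-\mu_j(g\1_{\mathcal{M}(\delta_i)})\bigr)$. The first term has $\EE[\max_i|T_3(\delta_i)|]\le \sqrt{\sum_i\EE[T_3(\delta_i)^2]}\lesssim \sqrt{m}/n$ using only second moments, and the second term has expectation $\lesssim Lh$ by the product version of \cref{prop:bound-Lipschitz}. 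Optimizing $h\asymp n^{-2/3}$ gives $\EE[\sup|T_3|]=\mathcal{O}(n^{-2/3})$, hence $\sup|T_3|=o_\PP(n^{-1/2})$ by Markov, without any chaining machinery. Apart from this one gap, your proposal is a complete and correct proof.
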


\subsection{Statistical gradient estimate}

Our first application of \cref{pr:VCtool} is a gradient estimate of statistical nature: we bound the difference $\nabla  \varphi_n -\nabla \varphi_\eps$ of the empirical and population gradients by the difference of the potentials themselves and an error term of order $\mathcal{O}_{\PP}(n^{-\frac{1}{2}})$. This estimate will be instrumental for our subsequent arguments towards the CLT of \cref{th:CLT.potentials}, specifically for \cref{lemma:cross-term}.  

\begin{lemma}\label{lemma:derivative-estimates-empirical-to-pop}
    We have
    $$  \| \nabla  \varphi_n -\nabla \varphi_\eps \|_{\infty} =\mathcal{O}_{\mathbb{P}} \left(n^{-\frac{1}{2}}+ \| ( \varphi_n, \psi_n) - (\varphi_\eps, \psi_\eps)\|_\oplus \right). $$
    As a consequence, the norm $\|\cdot\|_{\oplus,1} $  on $\mathcal{B}^{0,1}_\oplus=(\mathcal{C}^{0,1}(\mathcal{X})\times \mathcal{C}^{0,1}(\mathcal{Y}))/\sim_\oplus$ satisfies
    $$ \| ( \varphi_n, \psi_n) - (\varphi_\eps, \psi_\eps)\|_{\oplus, 1}= \mathcal{O}_{\mathbb{P}} \left(n^{-\frac{1}{2}}+ \| ( \varphi_n, \psi_n) - (\varphi_\eps, \psi_\eps)\|_\oplus \right). $$
\end{lemma}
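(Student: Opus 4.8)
The plan is to start from the explicit formula~\eqref{eq:emp-gradient-formula} for $\nabla\varphi_n$ and~\eqref{eq:population-gradient-formula} for $\nabla\varphi_\eps$, and to estimate the numerators and denominators separately. Writing
$$
\nabla\varphi_n(x)=\frac{\int_{\hat{\mathcal S}_x}y\,dQ_n(y)}{Q_n(\hat{\mathcal S}_x)},
\qquad
\nabla\varphi_\eps(x)=\frac{\int_{\mathcal S_x}y\,dQ(y)}{Q(\mathcal S_x)},
$$
I would bound the difference using the elementary inequality $|a/b-a'/b'|\le(|a-a'|+|a'|\,|b-b'|/b')/b$ together with the uniform lower bound $Q(\mathcal S_x)\ge\delta>0$ from~\eqref{eq:sectionsBounds} (and a companion lower bound $Q_n(\hat{\mathcal S}_x)\ge\delta/2$ that holds with probability tending to one, which will itself come out of the estimates below). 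Thus the task reduces to bounding, uniformly in $x\in\mathcal X$ and for each coordinate, the two quantities
$$
\Big|\int_{\hat{\mathcal S}_x}y_i\,dQ_n(y)-\int_{\mathcal S_x}y_i\,dQ(y)\Big|
\quad\text{and}\quad
\big|Q_n(\hat{\mathcal S}_x)-Q(\mathcal S_x)\big|.
$$

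The key step is to show that $\hat{\mathcal S}_x$ is sandwiched between thickenings/shrinkings of $\mathcal S_x$ whose levels are controlled by $\|(\varphi_n,\psi_n)-(\varphi_\eps,\psi_\eps)\|_\oplus$, so that \cref{pr:VCtool} applies. Concretely, if $(\varphi_n,\psi_n)$ is normalized within its $\sim_\oplus$ class so that $\|\varphi_n-\varphi_\eps\|_\infty+\|\psi_n-\psi_\eps\|_\infty\le 2\|(\varphi_n,\psi_n)-(\varphi_\eps,\psi_\eps)\|_\oplus=:2\rho_n$, then for every $y$,
$$
\varphi_n(x)+\psi_n(y)-\langle x,y\rangle
\le \varphi_\eps(x)+\psi_\eps(y)-\langle x,y\rangle+2\rho_n,
$$
and symmetrically with $-2\rho_n$; hence $\mathcal S_x(-2\rho_n)\subset\hat{\mathcal S}_x\subset\mathcal S_x(2\rho_n)$. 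Applying \cref{pr:VCtool} with $V_{n,\lambda,x}=W_{n,\lambda,x}=\1_{\hat{\mathcal S}_x}$, index set $\Lambda=\{1,\dots,d\}$ selecting the bounded function $g=y_i$ (and separately $g\equiv1$), and $\alpha_n=-2\rho_n$, $\beta_n=2\rho_n$ (which tend to $0$ in probability by \cref{lemma:consistency}), yields
$$
\sup_{x\in\mathcal X}\Big|\int_{\hat{\mathcal S}_x}g\,dQ_n-\int_{\hat{\mathcal S}_x}g\,dQ\Big|
=\mathcal O_{\mathbb P}(n^{-1/2}+\rho_n),
$$
and then \cref{prop:bound-Lipschitz} converts $\int_{\hat{\mathcal S}_x}g\,dQ$ into $\int_{\mathcal S_x}g\,dQ$ at the cost of another $\mathcal O(\rho_n)$ term (on the event $2\rho_n\le\beta_0$, which has probability tending to one). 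Combining these bounds over the numerator and denominator, and using the uniform lower bound on the masses to control the quotient, gives the first displayed estimate. The lower bound $Q_n(\hat{\mathcal S}_x)\ge\delta/2$ needed along the way follows from $Q_n(\hat{\mathcal S}_x)\ge Q(\mathcal S_x(-2\rho_n))-\mathcal O_{\mathbb P}(n^{-1/2})\ge\delta-\mathcal O_{\mathbb P}(n^{-1/2}+\rho_n)$.

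The final assertion is then immediate: by definition of the quotient norm $\|\cdot\|_{\oplus,1}$ on $\mathcal B^{0,1}_\oplus$, one has (up to the normalization above)
$$
\|(\varphi_n,\psi_n)-(\varphi_\eps,\psi_\eps)\|_{\oplus,1}
\lesssim \|(\varphi_n,\psi_n)-(\varphi_\eps,\psi_\eps)\|_\oplus
+\|\nabla\varphi_n-\nabla\varphi_\eps\|_\infty
+\|\nabla\psi_n-\nabla\psi_\eps\|_\infty,
$$
and the $\psi$-analogue of the first estimate (proved identically, swapping the roles of the two marginals) together with the $\varphi$-estimate closes the argument. The main obstacle is the sandwiching step: making sure the enveloping functions $V_{n,\lambda,x},W_{n,\lambda,x}$ in \cref{pr:VCtool} can be taken to be the \emph{same} indicator $\1_{\hat{\mathcal S}_x}$ for both the empirical and population integrals, and checking that the randomness of $\hat{\mathcal S}_x$ (it depends on $(\varphi_n,\psi_n)$, hence on the sample) is harmless because \cref{pr:VCtool} already allows the envelopes and the levels $\alpha_n,\beta_n$ to be random, as long as they are measurable and the levels vanish in probability — which is exactly what \cref{lemma:consistency} provides.
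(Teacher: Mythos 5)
Your proof is correct and takes essentially the same route as the paper: decompose the gradient difference into numerator and denominator contributions, sandwich $\hat{\mathcal S}_x$ between $\mathcal S_x(\pm\rho_n)$ via the uniform bound on $(\varphi_n-\varphi_\eps)\oplus(\psi_n-\psi_\eps)$, and bound each piece by \cref{pr:VCtool} together with \cref{prop:bound-Lipschitz}. The only differences are cosmetic: the paper's decomposition writes the denominator contribution as $\frac{\int_{\hat{\mathcal S}_x} y\,dQ_n}{Q_n(\hat{\mathcal S}_x)\,Q(\mathcal S_x)}\bigl(Q_n(\hat{\mathcal S}_x)-Q(\mathcal S_x)\bigr)$, so the empirical mass only appears inside an average that is bounded by $\|\mathcal Y\|_\infty$ and your side-lemma $Q_n(\hat{\mathcal S}_x)\ge\delta/2$ becomes unnecessary, and it invokes \cref{pr:VCtool} once with the asymmetric choice $V_{n,\lambda,x}=\1_{\mathcal S_x}$, $W_{n,\lambda,x}=\1_{\hat{\mathcal S}_x}$, which already absorbs the separate appeal you make to \cref{prop:bound-Lipschitz}.
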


\begin{proof}
    Recall \cref{eq:population-gradient-formula,eq:emp-gradient-formula}, and in particular the random sets $\hat{\mathcal{S}}_x=\hat{\mathcal{S}}_{x,n}$. Decompose
    \begin{align}\label{eq:gradientDiffDecomp}
     \nabla   \varphi_\eps(x)-\nabla \varphi_n(x)&= \frac{\int_{\mathcal{S}_x} y dQ(y)}{Q(\mathcal{S}_x)}-\frac{\int_{\hat{\mathcal{S}}_x} y dQ_n(y)}{Q_n(\hat{\mathcal{S}}_x)} \nonumber\\
        &=  \underbrace{\frac{\int_{\mathcal{S}_x} y dQ(y)-\int_{\hat{\mathcal{S}}_x} y dQ_n(y)}{Q(\mathcal{S}_x)}}_{A_n(x)}+ \underbrace{\frac{\int_{\hat{\mathcal{S}}_x} y dQ_n(y) }{Q_n(\hat{\mathcal{S}}_x) Q(\mathcal{S}_x)}  \left( Q_n(\hat{\mathcal{S}}_x) -Q(\mathcal{S}_x)\right)}_{B_n(x)}.
    \end{align}
We first focus on $A_n(x)$. In view of the uniform bound~\eqref{eq:sectionsBounds}, it is enough to estimate the real-valued function
\begin{align}\label{eq:VCproofDefDn}
x\mapsto D_n(x):= \int_{\mathcal{S}_x} y_i dQ(y)-\int_{\hat{\mathcal{S}}_x} y_i dQ_n(y),
\end{align}
 where $y_i$ is the $i$-th component of~$y$. Recalling the definition of $\mathcal{S}_x(\cdot)$ from~\eqref{eq:SxBetaDefn}, we check 
 $$ \mathcal{S}_x(-\| ( \varphi_n, \psi_n) - (\varphi_\eps, \psi_\eps)\|_\oplus ) \subset \hat{\mathcal{S}}_x \subset \mathcal{S}_x(\| ( \varphi_n, \psi_n) - (\varphi_\eps, \psi_\eps)\|_\oplus ).$$
Moreover, we trivially have $\mathcal{S}_x(-\| ( \varphi_n, \psi_n) - (\varphi_\eps, \psi_\eps)\|_\oplus ) \subset \mathcal{S}_x\subset\mathcal{S}_x(\| ( \varphi_n, \psi_n) - (\varphi_\eps, \psi_\eps)\|_\oplus )$. We can thus apply \cref{pr:VCtool} with
$$ g(y)=y_i, \quad \Lambda=\emptyset, \quad V_{n,\lambda,x} = \1_{\mathcal{S}_x}, \quad W_{n,\lambda,x} = \1_{\hat{\mathcal{S}}_x}, \quad \beta_n=-\alpha_n=\| ( \varphi_n, \psi_n) - (\varphi_\eps, \psi_\eps)\|_\oplus
$$
to conclude that $\sup_x |A_n(x)| = \mathcal{O}_{\mathbb{P}} (n^{-\frac{1}{2}}+ \| ( \varphi_n, \psi_n) - (\varphi_\eps, \psi_\eps)\|_\oplus )$.

It remains to bound the second term, $B_n(x)$, of~\eqref{eq:gradientDiffDecomp}. Noting the inequality 
$$|B_n(x)|\leq \|\mathcal{Y}\|_\infty \frac{|Q_n(\hat{\mathcal{S}}_x) -Q(\mathcal{S}_x)|}{Q(\mathcal{S}_x)} $$
and recalling from~\eqref{eq:sectionsBounds} the uniform lower bound for the denominator, it suffices to estimate $Q_n(\hat{\mathcal{S}}_x) -Q(\mathcal{S}_x)$. We apply \cref{pr:VCtool} (with the same data as above except that now $g(y)\equiv 1$) to get 
$\sup_x |Q_n(\hat{\mathcal{S}}_x) -Q(\mathcal{S}_x)| = \mathcal{O}_{\mathbb{P}} (n^{-\frac{1}{2}}+ \| ( \varphi_n, \psi_n) - (\varphi_\eps, \psi_\eps)\|_\oplus )$.
\end{proof}

\section{Proof of the CLT for potentials}\label{se:proof.CLT.potentials}

This section is devoted to the proof of \cref{th:CLT.potentials}. 
As outlined in \cref{se:proofMethodology}, we  follow the procedure for Z-estimation problems in empirical process theory. Define the nonlinear operator 
$$ \Banach\ni  \left( \begin{array}{c}
   f\\
   g
\end{array} \right)  \mapsto  \Phi  \left( \begin{array}{c}
   f\\
   g
\end{array} \right)=\left( \begin{array}{c}
    \int  \left( \langle \cdot, y\rangle- f(\cdot)-g(y)  \right)_+ dQ(y) \\  
     \int  \left( \langle x, \cdot \rangle- f(x)- g(\cdot )  \right)_+ dP(x)
\end{array} \right) \in  \mathcal{C}(\mathcal{X})\times \mathcal{C}(\mathcal{Y}) $$
and its empirical version 
$$ \Banach\ni  \left( \begin{array}{c}
   f\\
   g
\end{array} \right)  \mapsto  \Phi_n  \left( \begin{array}{c}
   f\\
   g
\end{array} \right)=\left( \begin{array}{c}
    \int  \left( \langle \cdot, y\rangle- f(\cdot)- g(y)  \right)_+ dQ_n(y) \\  
     \int  \left( \langle x, \cdot \rangle- f(x)- g(\cdot )  \right)_+ dP_n(x)
\end{array} \right) \in  \mathcal{C}(\mathcal{X})\times \mathcal{C}(\mathcal{Y}) .$$
We denote by $[\Phi]_\oplus$ the composition of $\Phi$ with the quotient map $[\cdot]_\oplus$ of ~$\sim_\oplus$, and similarly for $[\Phi_n]_\oplus$. Below, we first show that $\Phi$ is differentiable at the population potentials, with invertible derivative. Then, we show that 
$$ \sqrt{n}\left( \Phi_n \left( \begin{array}{c}
   \varphi_\eps\\
   \psi_\eps
\end{array} \right)- \Phi \left( \begin{array}{c}
   \varphi_\eps\\
   \psi_\eps
\end{array} \right) \right)  $$
converges to a tight Gaussian element of $\mathcal{C}(\mathcal{X})\times \mathcal{C}(\mathcal{Y})$. Finally, we show that the remainder 
$$ [\Phi_n-\Phi] \left( \begin{array}{c}
   \varphi_\eps-\varphi_n\\
   \psi_\eps-\psi_n
\end{array} \right) =o_\PP\left( n^{-\frac{1}{2}}+\left\| \left( \begin{array}{c}
   \varphi_\eps-\varphi_n\\
   \psi_\eps-\psi_n
\end{array} \right)\right\|_{\oplus}\right)  $$
tends to zero in probability. 
\subsection{Differentiability and invertibility of the optimality conditions}

Our first result establishes the Fr\'echet differentiability of 
    $\Phi$ at the point $(\varphi_\eps,\psi_\eps)$. We recall that $\Phi$ is Fr\'echet differentiable at $(f_*,g_*)$ with derivative
    $  {\rm D}\Phi_{(f_*,g_*)}$ if 
    $$ \lim_{\|(f,g)\|_{\oplus} \to 0} \frac{ \left\|\Phi(f_*+f,g_*+g)-\Phi(f_*,g_*) - {\rm D}\Phi_{(f_*,g_*)} (f,g) \right\|_{\mathcal{C}(\mathcal{X})\times 
    \mathcal{C}(\mathcal{Y})} }{\|(f,g)\|_{\oplus}}=0.$$
\begin{lemma}\label{lemma:frechet}
    The function $\Phi: \mathcal{B}_\oplus\to \mathcal{C}(\mathcal{X})\times \mathcal{C}(\mathcal{Y}) $ is Fr\'echet differentiable at $(\varphi_\eps,\psi_\eps)$ with derivative 
    $$  {\rm D}\Phi_{(\varphi_\eps,\psi_\eps)}\left( \begin{array}{c}
    f\\
    g
\end{array} \right) = \left( \begin{array}{c}
   {\rm D}\Phi_1  (f,g)\\
       {\rm D}\Phi_2  (f,g)
\end{array} \right), $$
where 
$$ {\rm D}\Phi_1  (f,g)(x)= - f(x) Q({\mathcal{S}_x}) -\int_{\mathcal{S}_x} g(y) dQ(y) \quad \text{for }x\in \mathcal{X},$$
$$ {\rm D}\Phi_2  (f,g)(y)= - g(y) P({\mathcal{T}_y}) -\int_{\mathcal{T}_y} f(x) dP(x) \quad \text{for }y\in \mathcal{Y} .  $$
\end{lemma}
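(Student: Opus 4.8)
The plan is to treat the two components of $\Phi$ separately, since they are symmetric, and to linearize the positive part $t\mapsto (t)_+$ inside the integral. Abbreviate $\xi(x,y):=\langle x,y\rangle-\varphi_\eps(x)-\psi_\eps(y)$, so that $\mathcal{S}_x=\{y\in\mathcal{Y}:\xi(x,y)\geq 0\}$, and for a perturbation $(f,g)\in\mathcal{B}_\oplus$ write $h(x,y):=f(x)+g(y)$; note that $h$ descends to the quotient and that $\|h\|_\infty=\|f\oplus g\|_\infty\leq\|(f,g)\|_\oplus=:\rho$. The difference to analyze is
$$\Phi_1(\varphi_\eps+f,\psi_\eps+g)(x)-\Phi_1(\varphi_\eps,\psi_\eps)(x)=\int\big[(\xi(x,y)-h(x,y))_+-(\xi(x,y))_+\big]\,dQ(y).$$
The first step is the elementary pointwise bound, proved by checking the four sign cases of $t$ and $t-s$,
$$\big|(t-s)_+-(t)_+ + s\,\1_{\{t>0\}}\big|\leq |s|\,\1_{\{|t|\leq |s|\}}\qquad\text{for all }t,s\in\R.$$
Applying this with $t=\xi(x,y)$ and $s=h(x,y)$, integrating over $y$ with respect to $Q$, and using that $Q(\partial\mathcal{S}_x)=0$ (by \cref{le:population.potentials}, $\partial\mathcal{S}_x=\{y:\xi(x,y)=0\}$ is the boundary of a convex body, hence $\mathcal{L}_d$-null, and $Q\ll\mathcal{L}_d$ by \cref{Assumption:reg}, so $\{y:\xi(x,y)>0\}$ may be replaced by $\mathcal{S}_x$), one arrives at
$$\Phi_1(\varphi_\eps+f,\psi_\eps+g)(x)-\Phi_1(\varphi_\eps,\psi_\eps)(x)={\rm D}\Phi_1(f,g)(x)+R_1(x),\qquad |R_1(x)|\leq \rho\, Q\big(\{y:|\xi(x,y)|\leq \rho\}\big).$$

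The heart of the argument is to bound $R_1$ \emph{uniformly} in $x$, and this is exactly where \cref{prop:bound-Lipschitz} is used. For $\rho>0$ one has the inclusion $\{y:|\xi(x,y)|\leq\rho\}\subseteq\mathcal{S}_x(\rho)\setminus\mathcal{S}_x(-2\rho)$, so with $L,\beta_0$ from \cref{prop:bound-Lipschitz} (applied with $g\equiv1$) and $2\rho<\beta_0$,
$$Q\big(\{y:|\xi(x,y)|\leq\rho\}\big)\leq Q(\mathcal{S}_x(\rho))-Q(\mathcal{S}_x(-2\rho))\leq 3L\rho\qquad\text{uniformly in }x\in\mathcal{X},$$
whence $\sup_{x\in\mathcal{X}}|R_1(x)|\leq 3L\rho^2=O(\|(f,g)\|_\oplus^2)=o(\|(f,g)\|_\oplus)$ as $\|(f,g)\|_\oplus\to0$. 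The second component is handled identically with $\mathcal{T}_y$, $P$ and the symmetric version of \cref{prop:bound-Lipschitz}, yielding the corresponding remainder $R_2$ of the same order. This establishes the Fréchet differentiability limit, provided ${\rm D}\Phi_{(\varphi_\eps,\psi_\eps)}$ is a bounded linear operator into $\mathcal{C}(\mathcal{X})\times\mathcal{C}(\mathcal{Y})$.

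That last point is routine. Linearity is immediate; the map is well-defined on $\mathcal{B}_\oplus$ because replacing $(f,g)$ by $(f+a,g-a)$ adds $\mp a\,Q(\mathcal{S}_x)$ to the two summands of ${\rm D}\Phi_1(f,g)(x)$, which cancel; boundedness follows from $|{\rm D}\Phi_1(f,g)(x)|\leq\|f\|_\infty+\|g\|_\infty$; and continuity of $x\mapsto{\rm D}\Phi_1(f,g)(x)$ reduces to that of $x\mapsto Q(\mathcal{S}_x)$ and $x\mapsto\int_{\mathcal{S}_x}g\,dQ$, both of which are in fact Lipschitz by the sandwich $\mathcal{S}_x(-\ell\|x-z\|)\subset\mathcal{S}_z\subset\mathcal{S}_x(\ell\|x-z\|)$ (with $\ell=\|\mathcal{Y}\|_\infty$) combined with \cref{prop:bound-Lipschitz}, exactly as in the proof of \cref{co:potentialGradientsAreLipschitz}. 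The only genuinely delicate point is the \emph{uniform-in-$x$} control of the remainder, and it has been entirely outsourced to \cref{prop:bound-Lipschitz}: from $Q(\partial\mathcal{S}_x)=0$ alone one would only get $R_1(x)=o(\|(f,g)\|_\oplus)$ pointwise in $x$, which delivers Gateaux but not Fréchet differentiability.
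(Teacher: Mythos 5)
Your proof is correct, and it reaches the same key reduction as the paper (uniform control via \cref{prop:bound-Lipschitz} applied to level-set sandwiches), but via a genuinely different linearization of $(\cdot)_+$. The paper uses the fundamental-theorem-of-calculus identity
$(a)_+ - (b)_+ = (a-b)\,\mathcal{L}_1\bigl(\{t\in[0,1]:(1-t)b+ta\geq 0\}\bigr)$
and thereby works with the set $\mathcal{J}_x\subset[0,1]\times\mathcal{Y}$, sandwiching $\mathcal{J}_x$ between $[0,1]\times\mathcal{S}_x(\pm\|(h_1,h_2)\|_\oplus)$; you instead prove the sharp pointwise estimate
$\bigl|(t-s)_+-(t)_+ + s\,\1_{\{t>0\}}\bigr|\leq|s|\,\1_{\{|t|\leq|s|\}}$
and directly bound the remainder by $\rho\,Q(\{|\xi(x,\cdot)|\leq\rho\})$, reducing to the same proposition via $\{|\xi(x,\cdot)|\leq\rho\}\subseteq\mathcal{S}_x(\rho)\setminus\mathcal{S}_x(-2\rho)$. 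Both arguments yield a quadratic remainder $O(\|(f,g)\|_\oplus^2)$; yours is arguably a touch more elementary, avoiding the product space, while the paper's FTC identity \eqref{eq:fundamentalTheoremCalc} is deliberately set up so that it can be reused verbatim in \cref{lemma:cross-term} and in the proof of \cref{th:CLT.plans}, which is why the paper prefers that form. Two small remarks: the inequality $\|h\|_\infty\leq\|(f,g)\|_\oplus$ you invoke is true only up to a constant depending on the choice of product norm underlying the quotient norm, but this constant is harmless (and the paper's proof makes the same implicit simplification); and your justification $Q(\partial\mathcal{S}_x)=0$, needed to replace $\1_{\{\xi(x,\cdot)>0\}}$ by $\1_{\mathcal{S}_x}$, is correct because $\mathcal{S}_x$ is convex with nonempty interior (so $\partial\mathcal{S}_x$ is $\mathcal{L}_d$-null) and $Q\ll\mathcal{L}_d$. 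Your closing observation is exactly the right emphasis: the pointwise bound alone, combined with $Q(\partial\mathcal{S}_x)=0$ and dominated convergence, gives only Gâteaux differentiability; it is \cref{prop:bound-Lipschitz} that upgrades this to a uniform-in-$x$ rate and hence to Fréchet differentiability.
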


\begin{proof}
Given $(h_1,h_2)\in\Banach$, set
    \begin{align*}
    A(h_1,h_2)(x)&:=    \int  \left( \langle x, y\rangle-(\varphi_\eps+h_1) (x)-(\psi_\eps+h_2)(y)  \right)_+ dQ(y) \\
    &\qquad - \int  \left( \langle x, y\rangle- \varphi_\eps(x)-\psi_\eps(y)  \right)_+ dQ(y).
    \end{align*}
    With the expression for ${\rm D}\Phi_1 $ stated in the lemma, our goal is to show that 
    \begin{equation}
    \label{eq:claim-frechet0}
    \lim_{\|(h_1,h_2)\|_{\oplus} \to 0}  \frac{\|A(h_1,h_2) - {\rm D}\Phi_1 (h_1,h_2) \|_\infty}{\|(h_1,h_2)\|_{\oplus}} =0.
    \end{equation}
    For any $a,b\in\R$, applying the fundamental theorem of calculus  to the absolutely continuous function $\phi(t)=((1-t)b+ ta)_+$ yields 
    \begin{align}\label{eq:fundamentalTheoremCalc}(a)_+ -  (b)_+ = \phi(1)-\phi(0)=(a-b) \mathcal{L}_1(t\in [0,1]: {(1-t)b  + t a\geq 0 } ).
    \end{align}    
    Set 
    $$\mathcal{J}_{x}=\{ (t,y)\in [0,1]\times \mathcal{Y}  :\, \langle x, y\rangle- \varphi_\eps(x)-\psi_\eps(y) -t h_1 (x)-  th_2(y)\geq 0\}, $$
    then the above yields
    \begin{align*}
    A(h_1,h_2)(x)&=-h_1 (x) (\mathcal{L}_1 \otimes Q)(\mathcal{J}_{x})
        - \int_{\mathcal{J}_{x}} h_2 (y) d(\mathcal{L}_1 \otimes Q)(t,y).
    \end{align*}
Inserting this expression into~\eqref{eq:claim-frechet0}, we see that a sufficient condition for~\eqref{eq:claim-frechet0} is
\begin{equation}
    \label{eq:claim-frechet}
   \sup_{\|g\|_\infty\leq 1} \left\| \int_{\mathcal{J}_{(\cdot)}} g (y) d(\mathcal{L}_1 \otimes Q)(t,y)- \int_{\mathcal{S}_{(\cdot)}} g(y) dQ(y)\right\|_\infty \to 0 \quad\mbox{as $\|(h_1,h_2)\|_{\oplus} \to 0 $.}
\end{equation}
Fix $x\in \mathcal{X}$ and $g\in \mathcal{C}(\mathcal{Y})$ with $\|g\|_\infty\leq 1$. Decomposing $g=(g)_+-(g)_-$ and using the triangle inequality, we may assume without loss of generality that $g\geq 0$. Recall the notation $ \mathcal{S}_x(\beta)= \{y: \varphi_\eps(x)+ \psi_\eps(y)\leq \langle x, y \rangle +\beta\} $ from~\eqref{eq:SxBetaDefn}.
We observe that 
\begin{align*}
    \mathcal{J}_{x}&\subset \{ (t,y)\in [0,1]\times \mathcal{Y}  :\, \langle x, y\rangle- \varphi_\eps(x)-\psi_\eps(y)\geq -\|(h_1,h_2)\|_{\oplus} \}\\
    &=[0,1]\times  \mathcal{S}_x(\|(h_1,h_2)\|_{\oplus})
\end{align*}
and similarly
$ [0,1]\times  \mathcal{S}_x(-\|(h_1,h_2)\|_{\oplus}) \subset \mathcal{J}_{x} $. As $g\geq0$, it follows that
$$ \int_{\mathcal{S}_{x}(-\|(h_1,h_2)\|_{\oplus})} g (y) dQ(y)\leq  \int_{\mathcal{J}_{x}} g (y) d(\mathcal{L}_1 \otimes Q)(t,y) \leq \int_{\mathcal{S}_{x}(\|(h_1,h_2)\|_{\oplus})} g (y) dQ(y).$$
By \cref{prop:bound-Lipschitz}, the upper and lower bounds both converge to $\int_{\mathcal{S}_{x}} g (y) dQ(y)$, uniformly in~$x$ and~$g$, as  $\|(h_1,h_2)\|_{\oplus} \to 0$. This completes the proof of~\eqref{eq:claim-frechet} and hence of the lemma.
\end{proof}

\cref{lemma:frechet} implies that 
$$ \left\| \Phi \left(\begin{array}{c}
     \varphi_\eps\\
     \psi_\eps
\end{array}\right)-\Phi \left(\begin{array}{c}
     \varphi_n\\
     \psi_n
\end{array}\right)- {\rm D} \Phi_{(\varphi_\eps,\psi_\eps)} \left(\begin{array}{c}
     \varphi_\eps -\varphi_n\\
       \psi_\eps-\psi_n
\end{array}\right)\right\|_{\mathcal{C}(\mathcal{X})\times \mathcal{C}(\mathcal{Y})}= o_\PP \left( \left\| \left(\begin{array}{c}
     \varphi_\eps -\varphi_n\\
       \psi_\eps-\psi_n
\end{array}\right) 
\right\|_\oplus \right).$$
Using also the uniform bounds~\eqref{eq:sectionsBounds}, it follows that
\begin{equation}\label{development-Phi-tilde}
 \left\|   \tilde\Phi \left(\begin{array}{c}
     \varphi_\eps\\
     \psi_\eps
\end{array}\right)-\tilde \Phi \left(\begin{array}{c}
     \varphi_n\\
     \psi_n
\end{array}\right)+ \mathbb{L} \left(\begin{array}{c}
     \varphi_\eps -\varphi_n\\
       \psi_\eps-\psi_n
\end{array}\right) \right\|_{\mathcal{C}(\mathcal{X})\times \mathcal{C}(\mathcal{Y})}= o_\PP \left( \left\| \left(\begin{array}{c}
     \varphi_\eps -\varphi_n\\
       \psi_\eps-\psi_n
\end{array}\right) 
\right\|_\oplus \right),
\end{equation}
where 
$$ \tilde\Phi \left(\begin{array}{c}
     f\\
     g
\end{array}\right) := \left( \begin{array}{c}
    \frac{1}{Q(\mathcal{S}_{(\cdot)})}\int  \left( \langle \cdot, y\rangle- f(\cdot)-g(y)  \right)_+ dQ(y) \\  
    \frac{1}{P(\mathcal{T}_{(\cdot)})} \int  \left( \langle x, \cdot \rangle- f(x)- g(\cdot )  \right)_+ dP(x)
\end{array} \right)$$
and $\mathbb{L} := {\rm I}+ \mathbb{A} $
with ${\rm I}$ denoting the identity operator and
$$ \mathbb{A} \left( \begin{array}{c}
    f\\
    g
\end{array} \right) := \left( \begin{array}{c}
     \mathbb{A}_1( g)\\
    \mathbb{A}_2( f)
\end{array} \right) = \left( \begin{array}{c}
   \frac{1}{Q(\mathcal{S}_{(\cdot)})}\int_{\mathcal{S}_{(\cdot)}}  g(y) dQ(y)\\
 \frac{1}{P(\mathcal{T}_{(\cdot)})}\int_{\mathcal{T}_{(\cdot)}}  f(x) dP(x)
\end{array} \right).$$
In the preceding displays, it is tacitly understood that the left-hand side takes two arguments $(x',y')$; on the right-hand side, $x'$ is inserted into $\mathcal{S}_{(\cdot)}$ and $f(\cdot)$, whereas $y'$ is inserted into $\mathcal{T}_{(\cdot)}$ and $g(\cdot)$.

We denote by $[\mathbb{L}]_\oplus$ the composition of $\mathbb{L}$ with the quotient map $[\cdot]_\oplus$. 
\begin{lemma}\label{lemma:invert}
    The operator $[\mathbb{L}]_\oplus: \mathcal{B}_\oplus\to \mathcal{B}_\oplus$ is a bounded bijection. 
\end{lemma}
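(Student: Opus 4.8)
The operator $[\mathbb{L}]_\oplus = \mathrm{I} + [\mathbb{A}]_\oplus$ is a compact perturbation of the identity, so the natural tool is the Fredholm alternative: it suffices to show that $[\mathbb{A}]_\oplus$ is a compact operator on $\mathcal{B}_\oplus$ and that $[\mathbb{L}]_\oplus$ is injective; surjectivity and boundedness of the inverse then follow automatically. Compactness of $\mathbb{A}$ is an Arzelà--Ascoli argument: for $(f,g)$ in the unit ball of $\mathcal{B}_\oplus$, the image components $x\mapsto \frac{1}{Q(\mathcal{S}_x)}\int_{\mathcal{S}_x}g\,dQ$ are uniformly bounded (by the lower bound $Q(\mathcal{S}_x)\ge\delta$ from~\eqref{eq:sectionsBounds}) and uniformly equicontinuous — here is where \cref{co:potentialGradientsAreLipschitz}, or directly \cref{prop:bound-Lipschitz}, is used, since $x\mapsto Q(\mathcal{S}_x)$ and $x\mapsto\int_{\mathcal{S}_x}g\,dQ$ are Lipschitz with constants independent of $g$ in the unit ball. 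One must be slightly careful that $\mathbb{A}$ is well-defined on the \emph{quotient}: shifting $(f,g)\to(f+a,g-a)$ changes $\mathbb{A}_1(g)$ by $-a$ and the ambient $f$-component by $+a$ (using $Q(\mathcal{S}_x)^{-1}\int_{\mathcal{S}_x}dQ = 1$), so $\mathbb{L}$ descends to the quotient; this is the reason $\mathbb{L}$ rather than $\mathbb{A}$ is the object that lives on $\mathcal{B}_\oplus$.

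The crux is injectivity: if $[\mathbb{L}]_\oplus(f,g) = 0$ in $\mathcal{B}_\oplus$, then $\mathbb{L}(f,g) = (c,-c)$ for a constant $c$, i.e.\ (absorbing $c$ into a representative)
\[
  f(x) + \frac{1}{Q(\mathcal{S}_x)}\int_{\mathcal{S}_x} g(y)\,dQ(y) = 0,\qquad
  g(y) + \frac{1}{P(\mathcal{T}_y)}\int_{\mathcal{T}_y} f(x)\,dP(x) = 0.
\]
I would argue by a maximum-principle / barycenter contraction: let $M = \max_{\mathcal{X}} f$ and $m = \min_{\mathcal{X}} f$ (attained by continuity and compactness), and similarly for $g$. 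The first equation says $f(x)$ equals the negative of an average of $g$ over $\mathcal{S}_x$, and since $\mathcal{S}_x$ has $Q$-mass bounded below and the density of $Q$ is bounded above (so $\mathcal{S}_x$ always contains a set of positive mass, and in fact its relative position varies continuously), one can show $\max f \le -\min_{\text{relevant}} g$ with equality forcing $g$ to be constant on the relevant sections. The cleanest route is: from the two equations, $f$ is determined by $g$ and vice versa, and composing the two averaging operators gives that $f$ equals an average of $f$ itself over a region that, by the uniform lower mass bounds and the connectedness afforded by convexity of $\mathcal{X},\mathcal{Y}$ and of the sections, is "large enough" to force $f$ constant; a constant $f$ then forces $g$ constant via the second equation, and plugging back in forces both constants to be zero — i.e.\ $(f,g)\sim_\oplus(0,0)$, which is exactly the zero of $\mathcal{B}_\oplus$. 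The $\sim_\oplus$ bookkeeping must be handled with care here, since "$f$ constant, $g$ constant" is precisely the ambiguity the quotient kills.

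The main obstacle I anticipate is making the injectivity argument rigorous without additional structure: a naive $\|f\|_\infty \le \|g\|_\infty \le \|f\|_\infty$ chain only yields $\le$, not strict contraction, so one needs a genuine \emph{strict} averaging inequality, which requires knowing that $\mathcal{S}_x$ is not degenerate and that these sections "move" with $x$ (otherwise $f$ could be a non-constant function that happens to be locally constant on each section's projection). I expect the resolution to exploit that $\mathcal{S}_x$ depends on $x$ through $\langle x,y\rangle - \varphi_\eps(x)$, so distinct $x$ give genuinely distinct sublevel sets, combined with the strict positivity of the $Q$-density on an open region — essentially an irreducibility/ergodicity property of the Markov-type kernel $x \rightsquigarrow \mathcal{S}_x$. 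An alternative, possibly cleaner, is to identify $[\mathbb{L}]_\oplus$ as (a perturbation of) the Hessian of a strictly convex functional — the dual objective \eqref{DQOT} — restricted to the directions where it is non-degenerate, i.e.\ modulo the $\oplus$-direction; strict convexity there gives injectivity directly. I would try the variational route first and fall back on the direct maximum-principle argument if the second-order analysis of the non-$\mathcal{C}^2$ objective proves awkward.
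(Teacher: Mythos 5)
Your high-level architecture matches the paper's: decompose $[\mathbb{L}]_\oplus = [\mathrm{I}+\mathbb{A}]_\oplus$, invoke the Fredholm alternative, and reduce to (a) compactness of $\mathbb{A}$ and (b) injectivity of $[\mathbb{L}]_\oplus$. The paper, however, disposes of both (a) and (b) by citing Lemmas~4.2 and~4.3 of \cite{GonzalezSanzNutzRiveros.25gradDesc} (the companion paper on gradient descent for the QOT dual), whereas you attempt inline arguments. Your Arzel\`a--Ascoli sketch for compactness is sound and is in fact the natural proof, using \cref{prop:bound-Lipschitz} (or \cref{co:potentialGradientsAreLipschitz}) for equicontinuity and \eqref{eq:sectionsBounds} for uniform boundedness, together with the observation that $\mathbb{L}$ (rather than $\mathbb{A}$) is the object that descends to $\mathcal{B}_\oplus$.

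The injectivity step, however, contains two genuine gaps. First, the ``absorbing $c$ into a representative'' step is incorrect: since $\mathbb{L}(f+a,\,g-a)=\mathbb{L}(f,g)$ for any constant $a$ (precisely the invariance that lets $\mathbb{L}$ descend to $\mathcal{B}_\oplus$), changing representatives does not change the value $(c,-c)=\mathbb{L}(f,g)$, so you cannot normalize $c=0$ by a choice of representative. The correct route is different: eliminating one variable gives $g-\mathbb{A}_2\mathbb{A}_1 g=-2c$, and since $\mathbb{A}_2\mathbb{A}_1$ is a Markov (averaging) operator with $\|\mathbb{A}_2\mathbb{A}_1\|_\infty\le1$ and fixing constants, iterating $g=-2nc+(\mathbb{A}_2\mathbb{A}_1)^n g$ and using $\|(\mathbb{A}_2\mathbb{A}_1)^ng\|_\infty\le\|g\|_\infty$ forces $c=0$. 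Second, and more substantially, once $c=0$ you need that the only fixed points of the Markov operator $S=\mathbb{A}_2\mathbb{A}_1$ are constants; as you yourself note, a naive $\|\cdot\|_\infty$ chain gives only non-strict inequalities. Making this strict requires an irreducibility/ergodicity property of the kernel $x\rightsquigarrow \mathcal{S}_x\rightsquigarrow \mathcal{T}_y$, which ultimately hinges on connectedness of the support $\{\xi_\eps\ge0\}$ of $\pi_\eps$ together with positivity of the densities. You correctly diagnose the obstacle and sketch two plausible resolutions (ergodicity of the kernel; strict convexity of the dual functional modulo $\oplus$), but neither is carried out, and the second is delicate because the dual objective is only $\mathcal{C}^{1,1}$, not $\mathcal{C}^2$, so its ``Hessian'' is not a classical object. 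The paper's reliance on \cite[Lemma~4.3]{GonzalezSanzNutzRiveros.25gradDesc} is precisely to dispose of this non-trivial step, and without an argument of that kind the injectivity is not established.
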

\begin{proof}
    As $\Banach$ is a Banach space, it suffices to show that the bounded linear operator $[\mathbb{L}]_\oplus$ is invertible. Indeed, the operator ${\mathbb{A}}$ is compact by \cite[Lemma 4.2]{GonzalezSanzNutzRiveros.25gradDesc}. Hence, by the Fredholm alternative, $[\mathbb{L}]_\oplus= [{\rm I}+ \mathbb{A}]_\oplus$ is invertible if and only if $\mathbb{L}(f,g) \sim_\oplus 0 $ implies $ (f,g)\sim_\oplus  0$. In view of the definition of $\mathbb{L}$, the latter is in turn equivalent to  ${\mathbb{A}}(f,g)\sim_\oplus -(f,g) $ implying $ (f,g)\sim_\oplus 0$, and that holds by \cite[Lemma 4.3]{GonzalezSanzNutzRiveros.25gradDesc}.
 \end{proof}

Taking equivalence classes in  \cref{development-Phi-tilde}, we have
$$  \left\|   [\tilde\Phi]_\oplus \left(\begin{array}{c}
     \varphi_\eps\\
     \psi_\eps
\end{array}\right)-[\tilde \Phi]_\oplus \left(\begin{array}{c}
     \varphi_n\\
     \psi_n
\end{array}\right)+ [\mathbb{L}]_\oplus \left(\begin{array}{c}
     \varphi_\eps -\varphi_n\\
       \psi_\eps-\psi_n
\end{array}\right) \right\|_{\oplus}= o_\PP \left( \left\| \left(\begin{array}{c}
     \varphi_\eps -\varphi_n\\
       \psi_\eps-\psi_n
\end{array}\right) 
\right\|_\oplus \right). $$
Setting also
$$ \tilde\Phi_n \left(\begin{array}{c}
     f\\
     g
\end{array}\right) := \left( \begin{array}{c}
    \frac{1}{Q(\mathcal{S}_{(\cdot)})}\int  \left( \langle \cdot, y\rangle-  f(\cdot)- g(y)  \right)_+ dQ_n(y) \\  
    \frac{1}{P(\mathcal{T}_{(\cdot)})} \int  \left( \langle x, \cdot \rangle-  f(x)-  g(\cdot )  \right)_+ dP_n(x)
\end{array} \right),$$
the first-order conditions~\eqref{population_optimality} and~\eqref{empirical_optimality} imply
$$ [\tilde\Phi]_\oplus \left(\begin{array}{c}
     \varphi_\eps\\
     \psi_\eps
\end{array}\right) = \left( \begin{array}{c}
    \frac{\eps}{Q(\mathcal{S}_{(\cdot)})}\\  
    \frac{\eps}{P(\mathcal{T}_{(\cdot)})} 
\end{array} \right)= [\tilde\Phi_n]_\oplus \left(\begin{array}{c}
     \varphi_n\\
     \psi_n
\end{array}\right)$$
and we conclude that 
\begin{equation}
    \label{eq:development-before-inverting-and-cross}
     \left\|   [\tilde\Phi_n]_\oplus \left(\begin{array}{c}
     \varphi_n\\
     \psi_n
\end{array}\right)-[\tilde \Phi]_\oplus \left(\begin{array}{c}
     \varphi_n\\
     \psi_n
\end{array}\right)+ [\mathbb{L}]_\oplus \left(\begin{array}{c}
     \varphi_\eps -\varphi_n\\
       \psi_\eps-\psi_n
\end{array}\right) \right\|_{\oplus}= o_\PP \left( \left\| \left(\begin{array}{c}
     \varphi_\eps -\varphi_n\\
       \psi_\eps-\psi_n
\end{array}\right) 
\right\|_\oplus \right). 
\end{equation}

\subsection{The remainder term}

Defining the ``remainder'' term
$$ \Delta_n := [\tilde\Phi_n]_\oplus \left(\begin{array}{c}
     \varphi_n\\
     \psi_n
\end{array}\right)-[\tilde \Phi]_\oplus \left(\begin{array}{c}
     \varphi_n\\
     \psi_n
\end{array}\right) -\left(  [\tilde\Phi_n]_\oplus \left(\begin{array}{c}
     \varphi_\eps\\
     \psi_\eps
\end{array}\right)-[\tilde \Phi]_\oplus \left(\begin{array}{c}
     \varphi_\eps\\
     \psi_\eps
\end{array}\right) \right),$$
we have from \cref{eq:development-before-inverting-and-cross} that
\begin{multline*}
     \left\|   [\tilde\Phi_n]_\oplus \left(\begin{array}{c}
     \varphi_\eps\\
     \psi_\eps
\end{array}\right)-[\tilde \Phi]_\oplus \left(\begin{array}{c}
     \varphi_\eps\\
     \psi_\eps
\end{array}\right)+ [\mathbb{L}]_\oplus \left(\begin{array}{c}
     \varphi_\eps -\varphi_n\\
       \psi_\eps-\psi_n
\end{array}\right) \right\|_{\oplus}\\= \mathcal{O}_\PP  \left( \|\Delta_n\|_\oplus\right)+ o_\PP \left(  \left\| \left(\begin{array}{c}
     \varphi_\eps -\varphi_n\\
       \psi_\eps-\psi_n
\end{array}\right) 
\right\|_\oplus \right).
\end{multline*}
As $[\mathbb{L}]_\oplus$ is continuously invertible by \cref{lemma:invert}, this implies 
\begin{multline}\label{eq:develop-with-error}
     \left\|  [\mathbb{L}]_\oplus^{-1} \left([\tilde\Phi_n]_\oplus \left(\begin{array}{c}
     \varphi_\eps\\
     \psi_\eps
\end{array}\right)- [\tilde \Phi]_\oplus \left(\begin{array}{c}
     \varphi_\eps\\
     \psi_\eps
\end{array}\right) \right)+  \left(\begin{array}{c}
     \varphi_\eps -\varphi_n\\
       \psi_\eps-\psi_n
\end{array}\right) \right\|_{\oplus}\\= \mathcal{O}_\PP  \left( \|\Delta_n\|_\oplus\right)+ o_\PP \left(  \left\| \left(\begin{array}{c}
     \varphi_\eps -\varphi_n\\
       \psi_\eps-\psi_n
\end{array}\right) 
\right\|_\oplus \right).
\end{multline}

The key step of the proof is the following estimate for $\|\Delta_n\|_\oplus$, which will be obtained on the strength of \cref{pr:VCtool}.

\begin{lemma}\label{lemma:cross-term}
 We have
$$  \| \Delta_n\|_\oplus =o_{\mathbb{P}}\left( \| ( \varphi_n, \psi_n) - (\varphi_\eps, \psi_\eps)\|_{\oplus} + n^{-\frac{1}{2}} \right) .$$
\end{lemma}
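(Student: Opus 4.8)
Since $\|\Delta_n\|_\oplus$ is at most the $\mathcal C(\mathcal X)\times\mathcal C(\mathcal Y)$-norm of the natural representative $(\tilde\Phi_n-\tilde\Phi)(\varphi_n,\psi_n)-(\tilde\Phi_n-\tilde\Phi)(\varphi_\eps,\psi_\eps)$, and the two components are symmetric, I would bound the first component in the sup-norm. Write $h_1:=\varphi_n-\varphi_\eps$, $h_2:=\psi_n-\psi_\eps$, $\rho_n:=\|(\varphi_n,\psi_n)-(\varphi_\eps,\psi_\eps)\|_\oplus$ and $\xi(x,y):=\langle x,y\rangle-\varphi_\eps(x)-\psi_\eps(y)$; then the first component of $\Delta_n$ at $x$ equals $Q(\mathcal S_x)^{-1}\int\big[(\xi(x,y)-h_1(x)-h_2(y))_+-(\xi(x,y))_+\big]\,d(Q_n-Q)(y)$. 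As this depends on $(h_1,h_2)$ only through $h_1\oplus h_2$, I may choose the representative with $\|h_1\|_\infty,\|h_2\|_\infty\le\rho_n$. Using the elementary identity $(a)_+-(b)_+=(a-b)\,\mathcal L_1(\{t\in[0,1]:(1-t)b+ta\ge0\})$ from the proof of \cref{lemma:frechet}, the integrand rewrites as $-(h_1(x)+h_2(y))\,\Theta_n(x,y)$ with $\Theta_n(x,y):=\mathcal L_1(\{t\in[0,1]:\xi(x,y)\ge t(h_1(x)+h_2(y))\})\in[0,1]$; and since $|h_1(x)+h_2(y)|\le2\rho_n$, one has $\Theta_n(x,\cdot)=\1_{\mathcal S_x}$ off the ``annulus'' $\mathcal S_x(2\rho_n)\setminus\mathcal S_x(-2\rho_n)$, in the notation of \eqref{eq:SxBetaDefn}. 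I would then split $\Theta_n=\1_{\mathcal S_x}+(\Theta_n-\1_{\mathcal S_x})$, producing (after multiplication by $-(h_1(x)+h_2(y))$ and integration against $d(Q_n-Q)$) three pieces.

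The first two pieces are easy. The piece $-h_1(x)(Q_n-Q)(\mathcal S_x)$ is $\mathcal O_\PP(\rho_n n^{-1/2})$, because $|h_1(x)|\le\rho_n$ and $\sup_x|(Q_n-Q)(\mathcal S_x)|=\mathcal O_\PP(n^{-1/2})$ by \cref{pr:VCtool} (taking $g\equiv1$, $V_{n,\lambda,x}=W_{n,\lambda,x}=\1_{\mathcal S_x}$, $\alpha_n=\beta_n=0$). The ``annulus'' piece is bounded by $2\rho_n(Q_n+Q)\big(\mathcal S_x(2\rho_n)\setminus\mathcal S_x(-2\rho_n)\big)$, whose $Q$-part is $\mathcal O_\PP(\rho_n)$ by \cref{prop:bound-Lipschitz} and whose $Q_n$-part is $\mathcal O_\PP(\rho_n+n^{-1/2})$ by VC theory (again via \cref{pr:VCtool}, or directly using that $\{\mathcal S_x(\delta):x,\delta\}$ is a VC class, shown in the proof of \cref{pr:VCtool}); hence this piece is $o_\PP(\rho_n+n^{-1/2})$. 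Using $Q(\mathcal S_x)\ge\delta$ from \eqref{eq:sectionsBounds}, the lemma is thus reduced to the remaining third piece, namely to showing $\sup_{x}\big|\int_{\mathcal S_x}h_2(y)\,d(Q_n-Q)(y)\big|=o_\PP(\rho_n+n^{-1/2})$.

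This is the crux, and the difficulty is that $h_2$ is a \emph{random} function whose level sets need not lie in any fixed VC class, so \cref{pr:VCtool} does not apply. The idea is to use that $h_2$ has a \emph{small} Lipschitz constant: set $c_n:=\|\nabla\varphi_n-\nabla\varphi_\eps\|_\infty+\|\nabla\psi_n-\nabla\psi_\eps\|_\infty$, so $\mathrm{Lip}(h_2)=\|\nabla h_2\|_\infty\le c_n$. Cover the compact convex set $\mathcal Y$ by $N_\epsilon\asymp\epsilon^{-d}$ axis-parallel boxes $C_1,\dots,C_{N_\epsilon}$ of diameter $\le\epsilon$ with centres $y_j$, and write $h_2=\sum_j h_2(y_j)\1_{C_j}+r_n$ with $\|r_n\|_\infty\le c_n\epsilon$. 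Then $\int_{\mathcal S_x}h_2\,d(Q_n-Q)=\sum_j h_2(y_j)(Q_n-Q)(\mathcal S_x\cap C_j)+\int_{\mathcal S_x}r_n\,d(Q_n-Q)$, where the last term is $\le2\|r_n\|_\infty\le2c_n\epsilon$, and the sum is bounded by $\|h_2\|_\infty N_\epsilon\sup_{j,x}|(Q_n-Q)(\mathcal S_x\cap C_j)|\le\rho_n N_\epsilon\,\mathcal O_\PP(n^{-1/2})$; here $\sup_{j,x}|(Q_n-Q)(\mathcal S_x\cap C_j)|=\mathcal O_\PP(n^{-1/2})$ because $\{\mathcal S_x\cap R:x\in\mathcal X,\ R\ \text{an axis-parallel box}\}$ is the intersection of two VC classes (the sublevel sets of \cref{pr:VCtool}'s proof and the boxes), hence VC with index bounded uniformly in $n$, and is $Q$-Donsker. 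Choosing a deterministic $\epsilon=\epsilon_n\to0$ with $\epsilon_n^{-d}n^{-1/2}\to0$ (e.g.\ $\epsilon_n=n^{-1/(3d)}$) then gives $\sup_x|\int_{\mathcal S_x}h_2\,d(Q_n-Q)|=\mathcal O_\PP(\rho_n\epsilon_n^{-d}n^{-1/2}+c_n\epsilon_n)=o_\PP(\rho_n+c_n)$.

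Combining the three pieces (and the symmetric bound for the second component of $\Delta_n$, with $\mathcal T_y$, $P$ and the roles of $h_1,h_2$ interchanged) yields $\|\Delta_n\|_\oplus=o_\PP(\rho_n+n^{-1/2}+c_n)$. Finally, \cref{lemma:derivative-estimates-empirical-to-pop} (and its $\psi$-analogue) gives $c_n=\mathcal O_\PP(n^{-1/2}+\rho_n)$, so the extra term is absorbed and $\|\Delta_n\|_\oplus=o_\PP(\rho_n+n^{-1/2})$, as claimed. I expect the main obstacle to be exactly the term $\int_{\mathcal S_x}h_2\,d(Q_n-Q)$: because the random potential difference $h_2$ falls outside the scope of \cref{pr:VCtool}, one is forced to combine a spatial discretisation — whose discretisation error is tamed by the smallness of $\mathrm{Lip}(h_2)$ coming from \cref{lemma:derivative-estimates-empirical-to-pop} — with a uniform VC bound over the grid cells, and getting the two scales to balance (so that both the $\rho_n\epsilon_n^{-d}n^{-1/2}$ and the $c_n\epsilon_n$ contribution are genuinely $o_\PP$ of the target) is the delicate point.
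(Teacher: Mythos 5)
Your proof is correct, and the decomposition you use is genuinely different from the paper's. Both arguments open with the same fundamental-theorem-of-calculus identity, but they diverge immediately afterward. The paper writes the integrand as $(\xi_n-\xi_\eps)\cdot\mathcal L_1(\{\lambda:\lambda\xi_n+(1-\lambda)\xi_\eps\ge0\})$, interchanges the $\lambda$- and $y$-integrations, normalizes by $\|\xi_n-\xi_\eps\|_{0,1}$, and reduces everything to $\sup_{\|g\|_{0,1}\le1,\,\lambda,\,x}|\int_{\mathcal S_{\lambda,x}}g\,d(Q_n-Q)|=o_\PP(1)$; the uniformity over $g$ is obtained for free from the Arzel\`a--Ascoli compactness of the Lipschitz ball in $\mathcal C(\mathcal Y)$ (a finite $\tfrac{\alpha}{4}$-net argument), so that only finitely many deterministic $g$'s have to be fed into \cref{pr:VCtool}. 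You instead split $\Theta_n=\1_{\mathcal S_x}+(\Theta_n-\1_{\mathcal S_x})$, dispose of the $h_1(x)\1_{\mathcal S_x}$ and annulus pieces by direct VC and Lipschitz estimates, and isolate $\int_{\mathcal S_x}h_2\,d(Q_n-Q)$ as the genuine obstruction; you then attack this by spatial discretisation of $\mathcal Y$, using that $\{\mathcal S_x\cap R\}$ (sections intersected with boxes) is still a fixed VC class, and balance the $\rho_n\epsilon_n^{-d}n^{-1/2}$ grid error against the $c_n\epsilon_n$ Lipschitz error by choosing $\epsilon_n\to0$ with $\epsilon_n^{-d}n^{-1/2}\to0$. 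Both routes ultimately consume \cref{lemma:derivative-estimates-empirical-to-pop} (the paper to replace $\|\cdot\|_{\oplus,1}$ by $\|\cdot\|_\oplus+n^{-1/2}$ at the outset; you to absorb $c_n$ at the end) and both rest on the VC/Donsker property of the sublevel sets. The trade-off: the paper's function-space compactness argument is cleaner and dimension-free (no $\epsilon_n$ to tune and no $\epsilon_n^{-d}$ appearing at an intermediate stage), whereas your version is more explicit and makes the role of the small Lipschitz constant of $h_2$ transparent, at the cost of a tuning parameter that must be checked to balance correctly --- which you do.
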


\begin{proof}%
We use the shorthands
\begin{align}\label{eq:shorthandsXi}
    \xi_\eps(x,y)=  \langle x, y \rangle-\varphi_\eps(x)- \psi_\eps(y), \qquad \xi_n(x,y)=  \langle x, y \rangle-\varphi_n(x)- \psi_n(y).
\end{align}
Expanding the definition of $\Delta_n$, using the continuity of the quotient map, and the uniform bound~\eqref{eq:sectionsBounds}, 
we need to show that
$$ \left\|\int  ( \xi_n(\cdot,y) )_+-(\xi_\eps(\cdot,y ))_+ d(Q_n-Q)(y) \right\|_\infty \leq  o_{\mathbb{P}}\left( \| ( \varphi_n, \psi_n) - (\varphi_\eps, \psi_\eps)\|_{\oplus} + n^{-\frac{1}{2}} \right),$$
$$ \left\|\int  ( \xi_n(x,\cdot) )_+-(\xi_\eps(x,\cdot))_+ d(P_n-P)(x) \right\|_\infty \leq  o_{\mathbb{P}}\left( \| ( \varphi_n, \psi_n) - (\varphi_\eps, \psi_\eps)\|_{\oplus} + n^{-\frac{1}{2}} \right).$$
We prove the first statement, the second is analogous. Thanks to \cref{lemma:derivative-estimates-empirical-to-pop}, it suffices to show
\begin{align}\label{eq:cross-term-proof-goal1}
\left\|\int  ( \xi_n(\cdot,y) )_+-(\xi_\eps(\cdot,y ))_+ d(Q_n-Q)(y) \right\|_\infty \leq  o_{\mathbb{P}}\left( \| ( \varphi_n, \psi_n) - (\varphi_\eps, \psi_\eps)\|_{\oplus,1}\right).
\end{align}
Using the  fundamental theorem of calculus as in~\eqref{eq:fundamentalTheoremCalc}  yields
\begin{align*}
    \delta_n(x,y) &:=  (\xi_n(x,y) )_+-(\xi_\eps(x,y))_+\\
    &=(\xi_n(x,y)- \xi_\eps(x,y)) \cdot \mathcal{L}_1(\lambda\in [0,1]:  \lambda\xi_n(x,y)+(1-\lambda)\xi_\eps(x,y) \geq 0).
\end{align*}
Hence, for every $x\in \mathcal{X}$,
\begin{multline*}
     \int \delta_n(x,y) d (Q_n-Q)(y)\\
     =  \int (\xi_n(x,y)- \xi_\eps(x,y)) \cdot \mathcal{L}_1(\lambda\in [0,1]:  \lambda\xi_n(x,y)+(1-\lambda)\xi_\eps(x,y) \geq 0) d (Q_n-Q)(y) .
\end{multline*}
Defining the interpolated sections $\mathcal{S}_{\lambda, x}=\{y:\  \lambda\xi_n(x,y)+(1-\lambda)\xi_\eps(x,y) \geq 0\}$ (which are random and depend on~$n$, a fact suppressed in the notation), we can bound this as
\begin{align*}
     \left|\int \delta_n(x,y) d (Q_n-Q)(y) \right|
     & \leq \|\xi_n -\xi_\eps\|_{0,1}\sup_{\lambda\in [0, 1]} \left|\int_{\mathcal{S}_{\lambda, x}} \frac{\xi_n(x,y)- \xi_\eps(x,y)}{\|\xi_n -\xi_\eps\|_{0,1}}  d (Q_n-Q)(y)\right| \\  
     &\leq  \|\xi_n -\xi_\eps\|_{0,1}  \sup_{\|g\|_{0,1}\leq 1, \, \lambda\in [0, 1], \, x\in \mathcal{X} } \left|\int_{\mathcal{S}_{\lambda, x} \ }  g(y) d(Q_n-Q)(y)\right|.
\end{align*}
Comparing with~\eqref{eq:cross-term-proof-goal1} and noting that $\|\xi_n -\xi_\eps\|_{0,1} \leq \| ( \varphi_n, \psi_n) - (\varphi_\eps, \psi_\eps)\|_{\oplus, 1}$, it then suffices to prove
\begin{equation}\label{eq:limit-uniform-g}
    \sup_{\|g\|_{0,1}\leq 1, \, \lambda\in [0, 1],\,  x\in \mathcal{X} } \left|\int_{\mathcal{S}_{\lambda, x} \ }  g(y) d(Q_n-Q)(y)\right| =o_{\mathbb{P}} \left(1 \right).
\end{equation}
\noindent\emph{Step~1.} To that end, we first show that for any fixed bounded and measurable~$g:\mathcal{Y}\to\R$, 
\begin{equation}
    \label{eq:limit-point-wise-g-lamb}
    \sup_{\lambda\in [0, 1],\,  x\in \mathcal{X} } \left|\int_{\mathcal{S}_{\lambda, x}}  g(y) d(Q_n-Q)(y)\right| =o_{\mathbb{P}} \left(1 \right).
\end{equation}
Note that $\xi_n(x,y) \wedge \xi_\eps(x,y)\geq0$ implies $y\in\mathcal{S}_{\lambda, x}$ for all $\lambda\in[0,1]$. Moreover, if $y\in\mathcal{S}_{\lambda, x}$ for some $\lambda\in[0,1]$, we must have $\xi_n(x,y) \vee \xi_\eps(x,y)\geq0$. Recalling $\mathcal{S}_x(\cdot)$ from~\eqref{eq:SxBetaDefn}, 
this yields
 $$ \mathcal{S}_x(-\| ( \varphi_n, \psi_n) - (\varphi_\eps, \psi_\eps)\|_\oplus)\subset \mathcal{S}_{\lambda,x} \subset \mathcal{S}_x(\| ( \varphi_n, \psi_n) - (\varphi_\eps, \psi_\eps)\|_\oplus)$$
 for all $\lambda\in[0,1]$. We can now apply \cref{pr:VCtool} with
 $$ \Lambda=[0,1], \quad V_{n,\lambda,x} = W_{n,\lambda,x} = \1_{\mathcal{S}_{\lambda,x}}, \quad \beta_n=-\alpha_n=\| ( \varphi_n, \psi_n) - (\varphi_\eps, \psi_\eps)\|_\oplus
$$
 to obtain 
 $$
  \sup_{\lambda\in [0, 1],\,  x\in \mathcal{X} } \left|\int_{\mathcal{S}_{\lambda, x}}  g(y) d(Q_n-Q)(y)\right| = \mathcal{O}_{\mathbb{P}} \left(n^{-\frac{1}{2}}+ \| ( \varphi_n, \psi_n) - (\varphi_\eps, \psi_\eps)\|_\oplus \right)=o_{\mathbb{P}} \left(1 \right),
 $$
 where the last equality is due to \cref{lemma:consistency}. This completes the proof of~\eqref{eq:limit-point-wise-g-lamb}.

\vspace{.3em}

\noindent\emph{Step~2.} It remains to infer the uniform convergence~\eqref{eq:limit-uniform-g} from~\eqref{eq:limit-point-wise-g-lamb}. This follows easily from the compactness of $ \overline{\mathbb{B}}_{\|\cdot\|_{0,1}}:=\{ \|g\|_{0,1}\leq 1\}$
in $\mathcal{C}(\mathcal{Y})$. Indeed, fix $\alpha>0$. By the Arzelà--Ascoli theorem, there exist $ g_1, \dots, g_{N_\alpha}\in \overline{\mathbb{B}}_{\|\cdot\|_{0,1}}$ such that  $$\sup_{g\in \overline{\mathbb{B}}_{\|\cdot\|_{0,1}}}\inf_{k=1, \dots, N_\alpha} \|g-g_k\|_\infty \leq \frac{\alpha}{4} . $$
We can then write
\begin{align*}
     &\left| \int_{\mathcal{S}_{\lambda, x} \ }  g(y) d(Q_n-Q)(y) \right| \\
&\qquad\leq \inf_k \left(\left|  \int_{\mathcal{S}_{\lambda, x} \ } ( g(y)-g_k(y)) d(Q_n-Q)(y) \right|
+ \left|\int_{\mathcal{S}_{\lambda, x} \ }  g_k(y) d(Q_n-Q)(y) \right| \right)\\
&\qquad \leq \frac{\alpha}{2}
+ \sum_{k=1}^{N_\alpha}\left| \int_{\mathcal{S}_{\lambda, x} \ }  g_k(y) d(Q_n-Q)(y) \right|
\end{align*}
and, using also \eqref{eq:limit-point-wise-g-lamb}, conclude that
\begin{multline*}
    \mathbb{P}\left( \left| \sup_{\|g\|_{0,1}\leq 1, \, \lambda\in [0, 1],\,  x\in \mathcal{X} }\int_{\mathcal{S}_{\lambda, x} \ }  g(y) d(Q_n-Q)(y) \right| \geq \alpha \right) \\
    \leq \mathbb{P}\left( \sum_{k=1}^{N_\alpha}\left| \sup_{\lambda\in [0, 1],\,  x\in \mathcal{X} }\int_{\mathcal{S}_{\lambda, x} \ }  g_k(y) d(Q_n-Q)(y) \right|\geq \frac{\alpha}{2}  \right) \to 0
\end{multline*}
as $n\to\infty$, completing the proof.
\end{proof}

\subsection{End of the proof of \cref{th:CLT.potentials}}

Returning to the proof of \cref{th:CLT.potentials}, note that combining \eqref{eq:develop-with-error} with \cref{lemma:cross-term} yields
\begin{multline}\label{eq:develop-with-error-2}
     \left\|  [\mathbb{L}]_\oplus^{-1} \left([\tilde\Phi_n]_\oplus \left(\begin{array}{c}
     \varphi_\eps\\
     \psi_\eps
\end{array}\right)- [\tilde \Phi]_\oplus \left(\begin{array}{c}
     \varphi_\eps\\
     \psi_\eps
\end{array}\right) \right)+  \left(\begin{array}{c}
     \varphi_\eps -\varphi_n\\
       \psi_\eps-\psi_n
\end{array}\right) \right\|_{\oplus}\\= o_\PP \left(  \left\| \left(\begin{array}{c}
     \varphi_\eps -\varphi_n\\
       \psi_\eps-\psi_n
\end{array}\right) 
\right\|_\oplus + n^{-\frac12}\right).
\end{multline}
We can now complete the proof using standard arguments. 
For $(X,Y)\sim P\otimes Q$, consider the centered random processes 
\begin{align*}
\mathcal{X} \ni  x\mapsto  \mathbb{U}_Q(x)&:= \left( \langle x, Y\rangle-  \varphi_\eps(x)- \psi_\eps(Y)  \right)_+-  \int  \left( \langle x, y\rangle-  \varphi_\eps(x)- \psi_\eps(y)  \right)_+ dQ(y),\\
\mathcal{Y} \ni  y\mapsto  \mathbb{U}_P(y)&:= \left( \langle X, y\rangle-  \varphi_\eps(X)- \psi_\eps(y)  \right)_+-  \int  \left( \langle x, y\rangle-  \varphi_\eps(x)- \psi_\eps(y)  \right)_+ dP(x),
\end{align*}
As the potentials are Lipschitz by \cref{le:population.potentials}, these processes have Lipschitz sample paths. Hence, by \cite[Theorem~3.5]{Naresh.CLT.1976}, they satisfy the central limit theorem in $\mathcal{C}(\mathcal{X})$ and $\mathcal{C}(\mathcal{Y})$, respectively. As a consequence, 
$$ \sqrt{n}\left(\tilde\Phi_n \left(\begin{array}{c}
     \varphi_\eps\\
     \psi_\eps
\end{array}\right)-\tilde \Phi  \left(\begin{array}{c}
     \varphi_\eps\\
     \psi_\eps
\end{array}\right) \right) = \sqrt{n}\left( \begin{array}{c}
    \frac{1}{Q(\mathcal{S}_{(\cdot)})}\int  \left( \langle \cdot, y\rangle-  \varphi_\eps(\cdot)- \psi_\eps(y)  \right)_+ d(Q_n-Q)(y) \\  
    \frac{1}{P(\mathcal{T}_{(\cdot)})} \int  \left( \langle x, \cdot \rangle-  \varphi_\eps(x)-  \psi_\eps(\cdot )  \right)_+ d(P_n-P)(x)
\end{array} \right) $$
converges weakly in $\mathcal{C}(\mathcal{X})\times  \mathcal{C}(\mathcal{Y})$ to
$(\frac{\mathbf{G}_Q}{Q(\mathcal{S}_{(\cdot)})},\frac{\mathbf{G}_P}{P(\mathcal{T}_{(\cdot)})} 
 ),$
 where $ ({\bf G}_Q, {\bf G}_P )\in \mathcal{C}(\mathcal{X})\times  \mathcal{C}(\mathcal{Y})$ are independent, Gaussian, centered, and tight, with covariances as stated in \cref{th:CLT.potentials}. By the continuity and linearity of the quotient map $[\cdot]_\oplus$, it follows that 
\begin{align}\label{eq:proofCLTGaussian}
\sqrt{n}\left([\tilde\Phi_n]_\oplus \left(\begin{array}{c}
     \varphi_\eps\\
     \psi_\eps
\end{array}\right)-[\tilde \Phi]_\oplus  \left(\begin{array}{c}
     \varphi_\eps\\
     \psi_\eps
\end{array}\right) \right)  \overset{\mathcal{B}_\oplus}{ \rightsquigarrow} \left[  \left( \begin{array}{c}
    \frac{\mathbf{G}_Q}{Q(\mathcal{S}_{(\cdot)})}\\  
    \frac{\mathbf{G}_P}{P(\mathcal{T}_{(\cdot)})} 
\end{array} \right)\right]_\oplus.
\end{align}
Combining \eqref{eq:proofCLTGaussian} with \eqref{eq:develop-with-error-2} and the continuous mapping theorem, we conclude that
$$
\sqrt{n}\left(\begin{array}{c}
     \varphi_\eps-\varphi_n\\
       \psi_\eps - \psi_n
\end{array}\right)    \overset{\mathcal{B}_\oplus}{ \rightsquigarrow} - [\mathbb{L}]_\oplus^{-1}\left[  \left( \begin{array}{c}
    \frac{\mathbf{G}_Q}{Q(\mathcal{S}_{(\cdot)})}\\  
    \frac{\mathbf{G}_P}{P(\mathcal{T}_{(\cdot)})} 
\end{array} \right)\right]_\oplus,$$
which was the claim of \cref{th:CLT.potentials}. \qed

\section{Proofs of the CLTs for the optimal costs and couplings}\label{Section:CLT-plans-and-cost}

On the strength of the central limit theorem for the potentials (\cref{th:CLT.potentials}) and the statistical gradient estimate (\cref{lemma:derivative-estimates-empirical-to-pop}), we can now easily derive the central limit theorem for the optimal costs.

\begin{proof}[Proof of \cref{th:CLT.cost}]
The optimality of the population potentials $(f_\eps,g_\eps)$ for the population dual problem~\eqref{DQOT} yields
\begin{align}
    \begin{split}\label{Upper-bound-Cost}
        {\rm QOT}_\eps(P_n,Q_n)&- {\rm QOT}_\eps(P,Q)\\
    &\geq  \int f_\eps(x) d(P_n-P)(x) +\int g_\eps(y) d(Q_n-Q)(y)\\ &\qquad - \frac{1}{2 \eps} \int \left( f_\eps(x)+g_\eps(y)- \frac{\|x-y\|^2}{2}\right)_+^2 d (P_n\otimes Q_n- P\otimes Q)(x,y) 
    \end{split}
\end{align}
whereas the optimality of the empirical potentials $(f_n,g_n)$ for the empirical dual yields
\begin{align}\label{Lower-bound-Cost}
    \begin{split}
        {\rm QOT}_\eps(P_n,Q_n)&- {\rm QOT}_\eps(P,Q)\\
    &\leq  \int f_n(x) d(P_n-P)(x) +\int g_n(y) d(Q_n-Q)(y)\\ &\qquad - \frac{1}{2 \eps} \int \left( f_n(x)+g_n(y)- \frac{\|x-y\|^2}{2}\right)_+^2 d (P_n\otimes Q_n- P\otimes Q)(x,y) .
    \end{split}
\end{align}
Note that the right-hand side of \eqref{Upper-bound-Cost} gives the limit described in \cref{th:CLT.cost}. Hence, it suffices to show that the difference between the right-hand sides of \eqref{Upper-bound-Cost} and \eqref{Lower-bound-Cost} behaves as $o_\PP(n^{-\frac{1}{2}})$; that is,
\begin{align}\label{eq:proofCostCLTclaim}
  \int (\tau_\eps-\tau_n)  d (P_n\otimes Q_n- P\otimes Q) = o_\PP(n^{-\frac{1}{2}})
\end{align}
for
\begin{align*}
\tau_\eps(x,y)&:=f_\eps(x)+ g_\eps(y)-\frac{1}{2 \eps}  \left( f_\eps(x)+g_\eps(y)- \frac{\|x-y\|^2}{2}\right)_+^2, \\
\tau_n(x,y)&:=f_n(x)+ g_n(y)-\frac{1}{2 \eps}  \left( f_n(x)+g_n(y)- \frac{\|x-y\|^2}{2}\right)_+^2.
\end{align*}
To see this, we first write 
\begin{align}
   \left| \int (\tau_\eps-\tau_n)  d (P_n\otimes Q_n- P\otimes Q) \right|&\leq \|\tau_\eps-\tau_n\|_{0,1}  \sup_{\|h\|_{0,1}\leq 1}\int h  d (P_n\otimes Q_n- P\otimes Q) \nonumber\\
   &=o_\PP\left(\|\tau_\eps-\tau_n\|_{0,1} \right) \label{eq:proofCostsCLT2},
\end{align}
where the second estimate holds because the unit ball in $\mathcal{C}^{0,1}
(\mathcal{X}\times \mathcal{Y})$ is a Glivenko--Cantelli class. Noting that the function $(\cdot)_+^2/2$ occurring in the definitions of $\tau_\eps$ and $\tau_n$ has Lipschitz-continuous derivative $(\cdot)_+$, our gradient estimate in \cref{lemma:derivative-estimates-empirical-to-pop} yields 
\begin{align*}
   o_\PP\left(\|\tau_\eps-\tau_n\|_{0,1} \right) = o_{\mathbb{P}} \left(n^{-\frac{1}{2}}+ \| ( f_n, g_n) - (f_\eps, g_\eps)\|_\oplus \right). %
\end{align*}
As we already know from  \cref{th:CLT.potentials} that $\| ( f_n, g_n) - (f_\eps, g_\eps)\|_\oplus=\mathcal{O}_{\mathbb{P}}(n^{-\frac{1}{2}})$, this completes the proof of~\eqref{eq:proofCostCLTclaim}.
\end{proof}

Lastly, we prove the central limit theorem for the optimal couplings.

\begin{proof}[Proof of \cref{th:CLT.plans}]
Note that 
$\int \eta d(\pi_n - \pi)= \int \bar{\eta} d(\pi_n - \pi)$ and define 
$$  \mathcal{E}_n:=  \int  \bar{\eta}(x,y)  \left( \xi_n(x,y)\right)_+ d(P_n\otimes Q_n)(x,y)-   \int  \bar{\eta}(x,y)  \left( \xi_\eps(x,y)\right)_+ d(P\otimes Q)(x,y).$$
Recalling~\eqref{eq:primal-dual}, we need to show
$\sqrt{n} \mathcal{E}_n/\eps\xrightarrow{w}  N\left(0, \sigma^2(\eta)/\eps^2\right)$. To that end, we decompose
\begin{align*}
    \mathcal{E}_n= A_n + B_n + C_n
\end{align*}
where
\begin{align*}
    A_n&:= \int  \bar{\eta}(x,y) \{ \left( \xi_n(x,y)\right)_+- (\xi_\eps(x,y))_+ \} d(P_n\otimes Q_n-P\otimes Q)(x,y),\\
    B_n&:= \int  \bar{\eta}(x,y) \{ \left( \xi_n(x,y)\right)_+- (\xi_\eps(x,y))_+ \} d(P\otimes Q)(x,y),\\
    C_n&:= \int  \bar{\eta}(x,y) (\xi_\eps(x,y))_+ d(P_n\otimes Q_n-P\otimes Q)(x,y).
\end{align*}
\noindent\emph{Step~1.} We first address the key part of the proof, which is to show
\begin{align}\label{eq:proofplansCLT0}
A_n= o_\PP\left(n^{-\frac{1}{2}}\right).
\end{align}
Using the  fundamental theorem of calculus as in~\eqref{eq:fundamentalTheoremCalc}  yields  $$  ( \xi_n(x,y))_+- (\xi_\eps(x,y))_+  = (\xi_n(x,y)- \xi_\eps(x,y)) V_n(x,y)$$
where $V_n(x,y):=\mathcal{L}_1(\{\lambda\in [0,1]: \lambda \xi_n(x,y) + (1-\lambda)\xi_\eps(x,y)\geq 0\})$.
We can therefore write 
\begin{align*}
A_n
&= \|\xi_n- \xi_\eps \|_{0,1}\int  \bar{\eta}(x,y) \frac{(\xi_n- \xi_\eps)(x,y)}{\|\xi_n- \xi_\eps \|_{0,1}} V_n(x,y) d(P_n\otimes Q_n-P\otimes Q)(x,y)\\
&\leq \|\xi_n- \xi_\eps \|_{0,1} \sup_{\| h\|_{0,1}\leq 1} \int \bar{\eta}(x,y) h(x,y)   V_n(x,y)   d(P_n\otimes Q_n-P\otimes Q)(x,y).
\end{align*}
An analogous lower bound holds with an infimum; we only discuss the supremum. 
Once again, our gradient estimate in \cref{lemma:derivative-estimates-empirical-to-pop} and \cref{th:CLT.potentials} yield
\begin{align*}
   o_\PP\left(\|\xi_n- \xi_\eps \|_{0,1} \right) = o_{\mathbb{P}} \left(n^{-\frac{1}{2}}+ \| ( \varphi_n, \psi_n) - (\varphi_\eps, \psi_\eps)\|_\oplus \right)=\mathcal{O}_{\mathbb{P}}\left(n^{-\frac{1}{2}}\right).
\end{align*}
Thus, it remains to show that 
\begin{align}\label{eq:proofplansCLT1}
\sup_{\| h\|_{0,1}\leq 1} \int \bar{\eta}(x,y) h(x,y)   V_n(x,y)   d(P_n\otimes Q_n-P\otimes Q)(x,y)=o_{\mathbb{P}} \left(1 \right).
\end{align}
We first show, for any bounded measurable $h:\mathcal{X}\times\mathcal{Y}\to\R$, that
\begin{align}\label{eq:proofplansCLT2}
\int \bar{\eta}(x,y) h(x,y)   V_n(x,y)   d(P_n\otimes Q_n-P\otimes Q)(x,y)=o_{\mathbb{P}} \left(1 \right).
\end{align}
Let $\delta_n= \| \xi_n- \xi_\eps\|_\infty$ and recall from~\eqref{eq:MBetaDefn} the notation $\mathcal{M}(\cdot)$. On the set $\mathcal{M}(-\delta_n)=\{\xi_\eps\geq \delta_n\}$, we have both $\xi_\eps\geq0$ and $\xi_n\geq0$, so that $V_n=1$. In view of $0\leq V_n\leq1$, it follows that $\1_{\mathcal{M}(-\delta_n)}\leq V_n$. 
On the set $\{V_n>0\}$, we have either $\xi_\eps\geq0$ or $\xi_n\geq0$, and hence $\xi_\eps+ \| \xi_n- \xi_\eps\|_\infty \geq 0$. Thus, $\{V_n>0\}\subset \{\xi_\eps\geq -\delta_n\}=\mathcal{M}(\delta_n)$, showing that $V_n\leq \1_{\mathcal{M}(\delta_n)}$. In summary,
$$
\1_{\mathcal{M}(-\delta_n)}\leq V_n \leq \mathcal{M}(\delta_n).
$$
Recalling that $\delta_n= o_{\mathbb{P}} \left(1 \right)$ by \cref{th:CLT.potentials}, we can thus apply \cref{pr:VCtoolProduct} with
$$ g=\bar{\eta}h, \quad \Lambda=\emptyset, \quad V_{n,\lambda}=W_{n,\lambda} = V_n, \quad \beta_n=-\alpha_n=\delta_n
$$
to conclude that 
\begin{align*}
\int \bar{\eta}(x,y) h(x,y)   V_n(x,y)   d(P_n\otimes Q_n-P\otimes Q)(x,y)=\mathcal{O}_{\mathbb{P}}\left(n^{-\frac12} + \delta_n \right) = o_{\mathbb{P}} \left(1 \right),
\end{align*}
which is~\eqref{eq:proofplansCLT2}. Finally, \eqref{eq:proofplansCLT1} follows from \eqref{eq:proofplansCLT2} and the compactness of $\{ \|h\|_{0,1}\leq 1\}$, exactly as in Step~2 in the proof of~\cref{lemma:cross-term}. This completes the proof of~\eqref{eq:proofplansCLT0}.

\vspace{.3em}

\noindent\emph{Step~2.} It remains to show that $\sqrt{n} (B_n+C_n)/\eps\xrightarrow{w}  N\left(0, \sigma^2(\eta)/\eps^2\right)$. The real-valued function
$$  \Banach\ni (f,g)\mapsto   \int   \bar{\eta}(x,y) \left( f(x)+g(x)-\frac{\|x-y\|^2}{2}\right)_+  d(P\otimes Q)(x,y)  $$
is Fréchet differentiable at the population potentials $(f_\eps,g_\eps)$ with derivative 
$$  \Banach\ni (f,g)\mapsto   \int_{\xi_\eps \geq 0}   \bar{\eta}(x,y)  (f(x)+ g(y)) d(P\otimes Q)(x,y)  .$$
The proof of this fact uses the continuity of $(f_\eps,g_\eps)$ and that $(P\otimes Q)(\partial\{\xi_\eps \geq 0\})=0$ as a consequence of Fubini's theorem and $Q(\partial\mathcal{S}_x)=0$; we omit the details as the argument is similar to (but simpler than) the proof of \cref{lemma:frechet}.
As a consequence,
$$   B_{n}=  \int_{\xi_\eps\geq 0 } \bar{\eta}(x,y) \left(  \xi_n(x,y) -\xi_\eps(x,y) \right)  d(P\otimes Q)(x,y) + o_{\PP}\left(\|   \xi_n -\xi_\eps\|_\infty \right),$$
and now \cref{th:CLT.potentials} yields that $B_{n}=B'_{n}+o_{\PP}(n^{-\frac{1}{2}})$ for
$$B'_{n}=\frac{1}{n^2} \sum_{i,j=1}^n \int_{\xi_\eps\geq 0} \bigg\{   \oplus\left( \mathbb{L}^{-1}\left[\left( \begin{array}{c}
           \frac{ (\xi_\eps( \cdot ,Y_j))_+ - \int (\xi_\eps( \cdot ,y'))_+dQ (y') } {Q(\mathcal{S}_{(\cdot)})}   \\
            \frac{ (\xi_\eps(X_i, \cdot ))_+-\int (\xi_\eps(x', \cdot ))_+dP(x') }{P(\mathcal{T}_{(\cdot)})}  
        \end{array}\right)\right]_{\oplus} \right)\bar{\eta} \bigg\} d(P\otimes Q),$$
where we recall that $X_i,Y_j$ denote the samples defining~$P_n$ and~$Q_n$. We note that both $B_n'$ and 
\begin{align*}
   C_n= \frac{1}{n^2}\sum_{i,j=1}^n \bar{\eta}(X_i,Y_j)(\xi_\eps(X_i,Y_j))_+ \,-\,\int \bar{\eta}(\xi_\eps)_+  d(P\otimes Q)
\end{align*}
are centered (w.r.t.~$\PP$) and have finite variance. In summary,  
$$\int \bar\eta d(\pi_n - \pi)=B'_n+ C_n + o_\PP(n^{-\frac{1}{2}})$$ is a $U$-statistic with finite variance up $o_\PP(n^{-\frac{1}{2}})$ terms, so that the central limit theorem for  $U$-statistics (cf.~\cite[Theorem~12.6]{Vaart.1998})  gives the desired result.
\end{proof}

\appendix

\section{Omitted Proofs}\label{se:omittedProofs}

This section collects the proofs for the statements in \cref{se:SetupBackground}, which are either known or follow easily from known results.

\begin{proof}[Proof of \cref{le:population.potentials}]
    Item (i), the first half of (ii), item (iv), and the first part of (v) up to~\eqref{eq:primal-dual}, 
    can all be found in \cite{Nutz.24}. The convexity of $\varphi_\eps$ and $\psi_\eps$ and the formula \eqref{eq:population-gradient-formula} are shown in \cite{WieselXu.24} and \cite{GonzalezSanzNutz2024.Scalar} (or see the proof of \cref{le:empirical.potentials} below). 

    For any $x\in \mathcal{X}$, the set $ \mathcal{S}_x=\{y\in\mathcal{Y}:\, \varphi_\eps(x)+\psi_\eps(y)-\langle x,y \rangle \leq 0\}$ is convex as a sublevel set of the convex function $y\mapsto \varphi_\eps(x)+\psi_\eps(y)-\langle x,y \rangle$. Clearly 
$$
\partial{\mathcal{S}}_x\subset\{y\in\mathcal{Y}:\,\varphi_\eps(x)+\psi_\eps(y)-\langle x,y \rangle=0\}.
$$
For the reverse inclusion, it suffices to observe that a point $y$ with $\varphi_\eps(x)+\psi_\eps(y)-\langle x,y \rangle=0$ cannot be a  minimum of the convex function $y\mapsto \varphi_\eps(x)+\psi_\eps(y)-\langle x,y \rangle$, as otherwise it would follow that this function is nonnegative, contradicting~\eqref{population_optimality}. 

As $\partial{\mathcal{S}}_x$ is the boundary of a convex set and $Q\ll \mathcal{L}_d$, we have that $Q(\partial{\mathcal{S}_x})=0$ and hence $Q(\{y\in\mathcal{Y}:\, \varphi_\eps(x)+\psi_\eps(y)-\langle x,y \rangle = 0\})=0$. By the continuity of $\varphi_\eps(x)+\psi_\eps(y)-\langle x,y \rangle$, it follows that 
$
  {\bf 1}_{\mathcal{S}_{x_n}}\to {\bf 1}_{\mathcal{S}_{x}}
$ $Q$-a.s.\ for $x_n\to x$. In particular, $x\mapsto Q(\mathcal{S}_{x})$ is continuous and, in view of its formula~\eqref{eq:population-gradient-formula}, $\nabla \varphi_\eps$ is continuous.

As before, $Q({\mathcal{S}}_x)>0$ by \eqref{population_optimality}, and hence continuity of $x\mapsto Q(\mathcal{S}_{x})$ also implies the uniform bound~\eqref{eq:sectionsBounds}. (A different proof of~\eqref{eq:sectionsBounds} can be found in \cite{BayraktarEckstein.2025.BJ}.) The expression for $\spt\pi_\eps$ in~(iv) follows from \eqref{eq:primal-dual} and the aforementioned fact that $Q(\partial{\mathcal{S}_x})=0$.
\end{proof}

\begin{proof}[Proof of \cref{le:empirical.potentials}]
    The existence of the potentials and their continuous extension to $\R^d$ 
    satisfying~\eqref{empirical_optimality} can again be found in \cite{Nutz.24}. We detail the proof of the convexity and differentiability properties as \cite{WieselXu.24} and  \cite{GonzalezSanzNutz2024.Scalar} do not state them in the discrete setting; however, we remark that the proof is similar.

We first verify that $\varphi_n$ is convex. Recall from~\eqref{empirical_optimality} that 
\begin{equation}
    \label{eq:definitionphi-proof}
    \eps=\int  \left(  \langle x,y \rangle -\varphi_n(x)-\psi_n(y)\right)_+ dQ_n(y) \quad\mbox{for all } x\in \mathcal{X}.
\end{equation}
For every  $x,x'\in \mathcal{X}$ and every $\lambda\in (0,1)$, convexity of $(\cdot)_+$ yields
\begin{align*}
    \int & \left(  \langle \lambda x+(1-\lambda) x',y \rangle -\lambda \varphi_n(x)- (1-\lambda) \varphi_n(x') -\psi_n(y)\right)_+ dQ_n(y) \\
    & \leq \lambda\int  \left(  \langle  x ,y \rangle - \varphi_n(x)-\psi_n(y)\right)_+ dQ_n(y) + (1-\lambda)\int  \left(  \langle  x' ,y \rangle - \varphi_n(x')-\psi_n(y)\right)_+ dQ_n(y)
\end{align*}
and the right-hand side equals $\eps$ by~\eqref{eq:definitionphi-proof}. On the other hand, \eqref{eq:definitionphi-proof} at the point $\lambda x+(1-\lambda) x'$ states that
$$ \eps= \int \left(  \langle \lambda x+(1-\lambda) x',y \rangle - \varphi_n(\lambda x+(1-\lambda) x')  -\psi_n(y)\right)_+ dQ_n(y).
 $$
Together, we have
\begin{multline*}
    \int \left(  \langle \lambda x+(1-\lambda) x',y \rangle -\lambda \varphi_n(x)- (1-\lambda) \varphi_n(x') -\psi_n(y)\right)_+ dQ_n(y)\\
    \leq \int \left(  \langle \lambda x+(1-\lambda) x',y \rangle - \varphi_n(\lambda x+(1-\lambda) x') -\psi_n(y)\right)_+ dQ_n(y),
\end{multline*}
and moreover the right-hand side equals $\eps>0$. It follows that the non-increasing function
$$ t\mapsto \int \left(  \langle \lambda x+(1-\lambda) x',y \rangle - t -\psi_n(y)\right)_+ dQ_n(y)$$
is strictly decreasing for $t$ in a neighborhood of $\varphi_n(\lambda x+(1-\lambda) x')$, so that we can conclude
$$ \varphi_n(\lambda x+(1-\lambda) x') \leq \lambda \varphi_n(x)+ (1-\lambda) \varphi_n(x').$$
In other words, $\varphi_n$ is convex. 

Next, we observe that \eqref{eq:definitionphi-proof} and the inequality $ (a)_+-(b)_+ \leq \mathbb{I}_{\{a\geq 0\}} (a-b)$, applied with
$a=(  \langle x,y \rangle -\varphi_n(x)-\psi_n(y))_+$ and  $b=\left(  \langle x',y \rangle -\varphi_n(x')-\psi_n(y)\right)_+$, imply that for $x,x'\in\mathcal{X}$,
\begin{eqnarray*}
\lefteqn{0=\int  \left(  \langle x,y \rangle -\varphi_n(x)-\psi_n(y)\right)_+ dQ_n(y)-\int  \left(  \langle x',y \rangle -\varphi_n(x')-\psi_n(y)\right)_+ dQ_n(y)}\hspace*{5cm}\\
&\leq&\int_{\hat{\mathcal{S}}_x}    \langle x-x',y \rangle dQ_n(y)-Q_n(\hat{\mathcal{S}}_x)(\varphi_n(x)-\varphi_n(x')).
\end{eqnarray*}
Note that $Q_n(\hat{\mathcal{S}}_x)$ cannot vanish, by \eqref{eq:definitionphi-proof}. Rearranging then gives
$$\varphi_n(x')- \varphi_n(x)\geq \Big\langle x'-x, \textstyle \frac{\int_{\hat{\mathcal{S}}_x} y dQ_n(y)}{Q_n(\hat{\mathcal{S}}_x)} \Big\rangle,$$
showing that $\frac{\int_{\hat{\mathcal{S}}_x} y dQ_n(y)}{Q_n(\hat{\mathcal{S}}_x)} $ is a subgradient of $\varphi_n$ at $x$. On the other hand, the convex function $ \varphi_n$ is $\mathcal{L}_d$-a.e.\ differentiable by Rademacher's theorem. Together, we conclude~\eqref{eq:emp-gradient-formula}. This formula also shows that $\|\nabla  \varphi_n(x)\|\leq \|\mathcal{Y}\|_\infty$ a.e., and hence that $  \varphi_n$ is $\|\mathcal{Y}\|_\infty$-Lipschitz.
\end{proof}

\begin{proof}[Proof of \cref{lemma:consistency}]
Almost surely, $P_n$ and $Q_n$ are probability measures supported in $\mathcal{X}$ and $\mathcal{Y}$ which converge weakly to $P$ and $Q$, respectively. We fix such a realization and show that any potentials $(\varphi_n,\psi_n)$ associated with $P_n$ and $Q_n$  converge to $(\varphi_\eps, \psi_\eps)$ in $\mathcal{B}_\oplus$. %
As we deal with equivalence classes, we may choose representatives of  $(\varphi_n, \psi_n)$  with 
$\varphi_n(x_0)=0$ 
for some fixed $x_0\in \mathcal{X}$. We recall from \cref{le:empirical.potentials} that $(\varphi_n,\psi_n)$ are Lipschitz with a constant depending only on $\mathcal{X}$ and $\mathcal{Y}$. Observe that given any $y\in\mathcal{Y}$, \eqref{empirical_optimality} implies that $\langle x,y\rangle -\varphi_n(x)-\psi_n(y)>0$ for some $x\in \mathcal{X}$. Using $\varphi_n(x_0)=0$ and boundedness of the supports, we deduce a uniform upper bound $\psi_n(y)\leq \varphi_n(x)-\langle x,y\rangle \leq C$, which by the Lipschitz continuity also implies a uniform lower bound. Moreover, $\langle x,y\rangle -\varphi_n(x)-\psi_n(y)>0$ now yields another upper bound $\varphi_n(x)\leq C$, which again also implies a  lower bound. We conclude that $(\varphi_n,\psi_n)$ are uniformly bounded and equicontinuous, so that by the Arzelà--Ascoli theorem, a subsequence $\{(\varphi_{n_k},\psi_{n_k})\}_k$ converges uniformly to a limit $(\varphi_\infty,\psi_\infty)$.
Note that $(x,y)\mapsto \langle x, y\rangle -\varphi_n(x)- \psi_n(y)$ is uniformly Lipschitz with some constant $L$ and hence
$$  \left\vert \int \left( \langle x, y\rangle -\varphi_{n_k}(x)- \psi_{n_k}(y) \right)_+ d(Q_{n_k}-Q)(y)  \right\vert  \leq L\cdot \sup_{\|f\|_{0,1}\leq 1} \int f  d(Q_{n_k}-Q) \to 0.$$
Using this observation and the empirical first-order condition~\eqref{empirical_optimality}, passing to the limit shows that $(\varphi_\infty,\psi_\infty)$ satisfies the population first-order condition~\eqref{population_optimality}. Thus $(\varphi_\infty,\psi_\infty)$ are population potentials and the claim follows by their uniqueness. 
\end{proof}

\bibliographystyle{abbrv}
\bibliography{biblio}

\end{document}